\newcommand*{\msc}[2][]{\href{https://mathscinet.ams.org/mathscinet/search/mscdoc.html?code=#2,(#1)}{Primary: #2\ifthenelse{\isempty{#1}}{}{; Secondary: #1}.}}
\protected\def\tikz@nonactivecolon{\ifmmode\mathrel{\mathop\ordinarycolon}\else:\fi}
\apptocmd{\sloppy}{\hbadness 10000\relax}{}{}
\newcommand*{\smashedvdots}{\ensuremath{\vphantom{\int^0}\smash\vdots}}
\theoremstyle{definition}
\newtheorem{defi}{Definition}[section]
\theoremstyle{plain}
\newtheorem{lemm}[defi]{Lemma}
\crefname{lemm}{Lemma}{Lemmata}
\newtheorem{theo}[defi]{Theorem}
\crefname{theo}{Theorem}{Theorems}
\newtheorem*{theo*}{Theorem}
\crefname{theo*}{Theorem}{Theorems}
\newtheorem{coro}[defi]{Corollary}
\newtheorem{prop}[defi]{Proposition}
\newtheorem{conj}[defi]{Conjecture}
\crefname{theoenum}{Theorem}{Theorems}
\theoremstyle{remark}
\newtheorem{rema}[defi]{Remark}
\newtheorem{exam}[defi]{Example}
\newtheoremstyle{maintheorem}{}{}{\itshape}{}{\bfseries}{}{.5em}{#1 \!\thmnote{#3}.}
\theoremstyle{maintheorem}
\newtheorem*{mainthm}{Theorem}
\newcommand{\overbar}[1]{\mkern 1.5mu\overline{\mkern-1.5mu#1\mkern-1.5mu}\mkern
1.5mu}
\newcommand*{\NN}{\mathbb{N}}
\newcommand*{\ZZ}{\mathbb{Z}}
\newcommand*{\QQ}{\mathbb{Q}}
\newcommand*{\RR}{\mathbb{R}}
\let\leq\leqslant
\let\geq\geqslant
\let\phi\varphi
\DeclareMathOperator{\im}{im}
\DeclareMathOperator{\id}{id}
\DeclareMathOperator{\coker}{coker}
\DeclareMathOperator{\supp}{supp}
\DeclareMathOperator{\pr}{pr}
\DeclareMathOperator{\rk}{rk}
\DeclareMathOperator{\Aut}{Aut}
\DeclareMathOperator{\res}{res}
\DeclareMathOperator{\GL}{GL}
\DeclareMathOperator{\thickness}{th}
\DeclareMathOperator{\Ore}{Ore}
\DeclareMathOperator{\tors}{tors}
\newcommand*{\Kr}{\ensuremath{{\widetilde K}_1}}
\newcommand*{\Poly}{{\ensuremath{\mathcal{P}}}}
\newcommand*{\PolyT}{{\ensuremath{\mathcal{P}_T}}}
\newcommand*{\Polytope}{\mathcal{P}}
\newcommand*{\MarkedPolytope}{\mathcal{M}}
\newcommand*{\Dab}[1][D]{{#1}^\times_{\textrm{ab}}}
\newcommand*{\bag}[1][D]{b^{#1}}
\newcommand*{\hag}[1][D]{h^{#1}}
\newcommand*{\chiag}[1][D]{\chi^{#1}}
\newcommand*{\Pag}[1]{P^{#1}}
\newcommand*{\PLtwo}{P_{L^2}}
\newcommand*{\torsionag}[1][D]{\rho_{#1}}
\newcommand*{\linnelld}{\mathcal{D}}
\begin{document}

\title{The agrarian polytope of two-generator one-relator groups}

\author{Fabian Henneke}
\email{\href{mailto:henneke@uni-bonn.de}{henneke@uni-bonn.de}}
\address{Max-Planck-Institut für Mathematik, Vivatsgasse 7, D-53111 Bonn, Germany}

\author{Dawid Kielak}
\email{\href{mailto:dkielak@math.uni-bielefeld.de}{dkielak@math.uni-bielefeld.de}}
\address{Fakultät für Mathematik, Universität Bielefeld, Postfach 100131, D-33501 Bielefeld, Germany}

\subjclass[2010]{\msc[12E15, 16S35, 20E06, 57Q10]{20J05}}

\begin{abstract}

Relying on the theory of agrarian invariants introduced in previous work, we solve a conjecture of Friedl--Tillmann: we show that the marked polytopes they constructed for two-generator one-relator groups with nice presentations are independent of the presentations used. We also show that, when the groups are additionally torsion-free, the agrarian polytope encodes the splitting complexity of the group. This generalises theorems of Friedl--Tillmann and Friedl--Lück--Tillmann.
\end{abstract}

\maketitle


\section{Introduction}

A focal point of much activity in low-dimensional topology in the recent years was the Virtually Fibred Conjecture of Thurston. The conjecture, now confirmed by Agol~\cite{Agol2013}, stipulated that every (closed connected oriented) hyperbolic $3$-manifold virtually fibres over the circle. Thanks to a classical result of Stallings~\cite{Stallings1962}, the statement can be recast in the language of group theory:
\begin{theo*}[\cite{Agol2013}]
 Let $G$ be the fundamental group of a closed connected oriented hyperbolic $3$-manifold. Then $G$ admits a finite index subgroup which maps onto $\ZZ$ with a finitely generated kernel.
\end{theo*}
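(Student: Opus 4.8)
The plan is to follow the now-standard route through CAT(0) cube complexes and Agol's RFRS fibring criterion. Write $M$ for the closed hyperbolic $3$-manifold and $G=\pi_1(M)$. First, since $M$ is closed and negatively curved, the Švarc--Milnor lemma together with $\delta$-hyperbolicity of $\mathbb{H}^3$ shows that $G$ is word-hyperbolic, with Gromov boundary $\partial_\infty G \cong S^2$. The first substantive input is the surface subgroup theorem of Kahn--Markovic: $M$ contains $\pi_1$-injective immersed closed quasi-Fuchsian surfaces, and enough of them that the corresponding quasi-convex surface subgroups separate every pair of points of $\partial_\infty G$. Each such subgroup is codimension one, so feeding this collection of walls into Sageev's construction (in the form of the Bergeron--Wise boundary criterion for cubulation, using Hsu--Wise to arrange finitely many orbits of walls) produces a proper cocompact action of $G$ on a CAT(0) cube complex $X$.

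Next I would invoke Agol's virtual specialness theorem: a word-hyperbolic group acting properly and cocompactly on a CAT(0) cube complex is virtually special in the sense of Haglund--Wise. Thus there is a finite-index subgroup $G_0 \leq G$ such that $G_0 \backslash X$ is a compact special cube complex; the engine driving this is Wise's malnormal special quotient theorem combined with Agol's hierarchy/weak-separation colouring argument. By the Haglund--Wise canonical completion and retraction, $G_0$ then embeds as a quasi-convex subgroup of a right-angled Artin group $A$.

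Finally I would apply Agol's earlier RFRS criterion. Right-angled Artin groups are residually torsion-free nilpotent, hence RFRS, and being RFRS passes to subgroups, so $G_0$ is RFRS; in particular $G$ is virtually RFRS. Agol's theorem then asserts that a compact orientable aspherical $3$-manifold with virtually RFRS fundamental group admits a finite cover $\widehat{M}\to M$ that fibres over $S^1$. Such a fibration exhibits $\pi_1(\widehat{M})$ as an extension $1 \to K \to \pi_1(\widehat M) \to \ZZ \to 1$ with $K$ the fundamental group of the (compact) fibre surface, hence finitely generated; this is the easy direction of Stallings' fibration theorem, and it gives precisely the claimed finite-index subgroup of $G$ mapping onto $\ZZ$ with finitely generated kernel.

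The main obstacles are the two deep geometric inputs: the Kahn--Markovic construction of quasi-Fuchsian surface subgroups, which relies on exponential mixing of the frame flow and a careful gluing of pairs of pants along long geodesic segments, and Agol's virtual specialness theorem, which rests on Wise's quasi-convex hierarchy machinery. By contrast, everything downstream — Sageev's cubulation, the retraction into a right-angled Artin group, the RFRS property, and the final fibring step — is comparatively formal once these two results are available.
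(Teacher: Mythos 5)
This statement is quoted in the paper purely as background, attributed to Agol's 2013 work; the paper itself contains no proof of it. Your outline is a faithful summary of the standard argument in the literature (Kahn--Markovic surface subgroups, Bergeron--Wise/Sageev cubulation, Agol's virtual specialness via Wise's hierarchy machinery, the Haglund--Wise retraction into a right-angled Artin group, the RFRS fibring criterion, and Stallings' fibration theorem), so there is nothing in the paper to compare it against, and as a sketch of the known proof it is accurate.
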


The study of finiteness properties of kernels of epimorphisms to $\ZZ$ is the cornerstone of the Bieri--Neumann--Strebel theory. In particular, the specific question of which epimorphisms $\phi \colon G \to \ZZ$ have finitely generated kernels is encoded by the first BNS invariant $\Sigma^1(G)$, a subset of $H^1(G;\RR)$.

If $G$ is the fundamental group of a connected orientable $3$-manifold, then $\Sigma^1(G)$ is controlled by the Thurston polytope (see~\cites{Thurston1986,Bierietal1987}). More explicitly, there exists a compact convex polytope $P \subset H_1(G;\RR)$ with some vertices marked, such that an epimorphism $\phi \colon G \to \ZZ$ belongs to $\Sigma^1(G)$ if and only if it attains its minimum when restricted to $P$ uniquely at a marked vertex. In this case, the kernel has to be a surface group, and the thickness of the polytope $P$ in the direction of $\phi$, denoted $\thickness_\phi(P)$,  gives us the genus of the surface.

\smallskip
A similar picture was conjectured by Friedl--Tillmann~\cite{FT2015} to hold for two-generator one-relator groups. They start with a \emph{nice} presentation $\pi$ of such a group $G$, which in particular requires $H_1(G)$ to be of rank $2$, and using the presentation they construct a polytope $\Polytope_\pi \subset H_1(G;\RR)$. Then they mark some of the vertices of $\Polytope_\pi$, and obtain a marked polytope $\MarkedPolytope_\pi$ which controls $\Sigma^1(G)$ in a way analogous to the Thurston polytope.
The process of obtaining $\MarkedPolytope_\pi$ is very similar to Brown's algorithm~\cite{Brown1987}, a method of computing $\Sigma^1(G)$ of one-relator groups.

Friedl--Tillmann made two conjectures related to $\MarkedPolytope_\pi$: First, they conjectured that the polytope $\MarkedPolytope_\pi$ depends only on $G$ and not on $\pi$; second, the thickness $\thickness_\phi(\MarkedPolytope_\pi)$ for an epimorphism $\phi \colon G \to \ZZ$ is supposed to compute the \emph{(free) splitting complexity} of $G$ relative to $\phi$, a number informing about the `smallest' way $G$ can be written as an HNN extension with induced character $\phi$.
They proved their conjectures in \cite{FT2015} under the additional hypothesis that the group $G$ is residually \{torsion-free elementary amenable\}; later the first conjecture was confirmed by Friedl--L\"uck~\cite{FL2017} under the weaker assumption that $G$ is torsion-free and satisfies the strong Atiyah conjecture.

Here a complete resolution of the first conjecture is offered:
\begin{mainthm}[\ref{theo:tgor:invariance}]
If $G$ is a group admitting a nice $(2, 1)$-presentation $\pi$, then $\MarkedPolytope_\pi\subset H_1(G;\RR)\cong \RR^2$ is an invariant of $G$ (up to translation). Moreover, if $G$ is torsion-free then $\Polytope_\pi=\Pag{D_r}(G)$ for any choice of an agrarian embedding $\ZZ G\hookrightarrow D$.
\end{mainthm}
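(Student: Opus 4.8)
The plan is to compute the agrarian torsion of the presentation $2$-complex of $\pi$ directly from its Fox Jacobian, transport it through the polytope homomorphism, identify the result with the Friedl--Tillmann polytope $\Polytope_\pi$, and read off the invariance; the case of groups with torsion is then reduced to the torsion-free case using the classical structure theory of one-relator groups with torsion. Suppose first that $G$ is torsion-free. Then $G$, being a torsion-free one-relator group, is locally indicable and hence agrarian; fix an agrarian embedding $\ZZ G\hookrightarrow D$, which we may take to be the epic division ring of fractions. Niceness of $\pi=\langle a,b\mid r\rangle$ (with $r\neq 1$) forces $H\coloneqq H_1(G;\ZZ)\cong\ZZ^2$ with $\{\overline a,\overline b\}$ a basis. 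Let $D_r$ be the division closure of $\ZZ[\ker(G\to H)]$ in $D$, so that $\ZZ G=\ZZ[\ker(G\to H)]\ast H\subseteq D_r\ast H\subseteq D$, the middle ring is an Ore domain with Ore field of fractions $D$, and the polytope homomorphism into $\PolyT$ is available. The cellular $\ZZ G$-chain complex of the presentation $2$-complex is
\[
0\longrightarrow \ZZ G\xrightarrow{\ \partial_2\ }(\ZZ G)^2\xrightarrow{\ \partial_1\ }\ZZ G\longrightarrow 0 ,
\]
with $\partial_1=(a-1,\ b-1)$ and $\partial_2$ the column of Fox derivatives $(\partial r/\partial a,\ \partial r/\partial b)^{\mathrm{T}}$. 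After applying $D\otimes_{\ZZ G}-$ it becomes acyclic: $H_0=0$ because $G$ is infinite, $H_2=0$ because $r\neq 1$ makes $\partial_2$ injective over the skew field $D$, and an Euler characteristic count then forces $H_1=0$.

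The acyclic based complex $D\otimes_{\ZZ G}C_\ast$ therefore has a well-defined agrarian torsion. Inverting the unit $b-1$ of $D$ (legitimate as $\overline b$ has infinite order) and using the Fox identity $(\partial r/\partial a)(a-1)+(\partial r/\partial b)(b-1)=r-1=0$, an elimination as in \cite{FL2017} represents this torsion, up to trivial units, by the $1\times 1$ matrix $[\,\partial r/\partial a\,]\cdot[\,b-1\,]^{-1}$. Applying the polytope homomorphism, and observing that $\ZZ[\ker(G\to H)]\hookrightarrow D_r$ is injective — so that the Newton polytope over $H$ of a Fox derivative, viewed in $D_r\ast H$, is the convex hull of its $H$-support and in particular does not involve $D$ — one obtains
\[
\Pag{D_r}(G)=P(\partial r/\partial a)-P(b-1)\qquad\text{in }\PolyT ,
\]
where $-$ denotes Minkowski difference; the symmetric computation gives the equal expression $P(\partial r/\partial b)-P(a-1)$, the two agreeing by the Fox identity together with additivity of Newton polytopes over the domain $D_r\ast H$. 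One then identifies this with the combinatorially defined Friedl--Tillmann polytope $\Polytope_\pi$, obtaining $\Polytope_\pi=\Pag{D_r}(G)$, the ``Moreover'' assertion; since $\Pag{D_r}(G)$ is, by the agrarian machinery of the earlier sections, an invariant of $G$ — and by the $H$-support description independent of the choice of $D$ — it follows that $\Polytope_\pi$ is an invariant of $G$ up to translation.

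The marking of $\MarkedPolytope_\pi$ is recovered from the pair $(\Polytope_\pi,\Sigma^1(G))$: by the unconditional theorem of \cite{FT2015}, a character $\phi\colon G\to\ZZ$ lies in $\Sigma^1(G)$ precisely when it attains its minimum on $\MarkedPolytope_\pi$ uniquely at a marked vertex, so a vertex of $\Polytope_\pi$ is marked exactly when the open cone of characters minimised over $\Polytope_\pi$ uniquely there lies in $\Sigma^1(G)$ (with the obvious reading when $\Polytope_\pi$ is a segment or a point). As $\Sigma^1(G)$ is an invariant of $G$, so is $\MarkedPolytope_\pi$ once $\Polytope_\pi$ is known to be one, which is the previous paragraph when $G$ is torsion-free. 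If $G$ has torsion, then in any nice presentation $\pi=\langle a,b\mid r\rangle$ the relator is a proper power $r=s^n$ with $n\geq 2$ and $s$ not a proper power; put $\pi_0=\langle a,b\mid s\rangle$. Then $G_0=\langle a,b\mid s\rangle$ is torsion-free and still has $b_1=2$ (as $s\in[F,F]$), so $\pi_0$ is nice, and by the classical description of torsion in one-relator groups $G_0$ is the quotient of $G$ by the normal closure of its torsion elements, hence canonically attached to $G$. From $\partial(s^n)/\partial x=(1+s+\dots+s^{n-1})\,\partial s/\partial x$ and the fact that $s$ maps to the identity of $H$, the image of $\partial r/\partial x$ in $\ZZ[H]$ is $n$ times that of $\partial s/\partial x$, and multiplying by the nonzero integer $n$ changes no support; hence $\Polytope_\pi=\Polytope_{\pi_0}$. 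By the torsion-free case $\Polytope_{\pi_0}=\Pag{D_r}(G_0)$ is an invariant of $G_0$, and therefore of $G$; combined with the description of the marking applied to $G$, this shows $\MarkedPolytope_\pi$ is an invariant of $G$.

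I expect the core of the argument to be the identification $\Polytope_\pi=\Pag{D_r}(G)$, that is, reconciling the combinatorial Friedl--Tillmann polytope — built from the Fox derivatives over $\ZZ[H]$ — with the agrarian torsion polytope computed over the division closure $D_r$ of $\ZZ[\ker(G\to H)]$; concretely this amounts to controlling cancellation in the extremal coefficients of the Fox derivatives when their $D_r$-coefficients are replaced by their augmentations, and it is here that the hypothesis that $\pi$ is nice, and the structural analysis of division closures of locally indicable group rings from the earlier sections, do their work. By contrast the reduction of the case with torsion is comparatively soft, resting only on classical facts about one-relator groups with torsion.
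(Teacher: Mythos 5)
Your argument is correct, and for the torsion-free half it follows essentially the paper's route: compute the agrarian torsion of the presentation $2$-complex from the Fox Jacobian, represent it (after inverting $b-1$) by $\partial r/\partial a$ against $b-1$, push through the polytope homomorphism to get $P(\partial r/\partial a)-P(b-1)$, and invoke homotopy invariance of the agrarian polytope. The one step you flag but do not carry out --- that the Newton polytope of $\partial r/\partial a$ computed over the coefficient skew field (equivalently, from its cancellation-free support in $\ZZ G$ given by \cref{lemm:tgor:nontriviality}) coincides with the Newton polytope of its augmentation image in $\ZZ H$ --- is exactly the content of \cite[Proposition~3.5]{FT2015}, which the paper likewise cites rather than reproves; so this is a deferral, not a gap. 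Where you genuinely diverge is the torsion case. The paper handles it by quoting Pride's rigidity theorem (any two $(2,1)$-presentations of a two-generator one-relator group with torsion are related by an automorphism of $F_2$, up to inverting the relator) together with \cref{prop:tgor:nielsen}, plus the observation that $\Sigma^1(G)=\emptyset$; that case never touches the agrarian machinery. You instead write $r=s^n$, pass to the canonically attached torsion-free quotient $G_0=G/\langle\langle \mathrm{tors}\rangle\rangle=\langle a,b\mid s\rangle$, observe that $\overbar{\partial r/\partial x}=n\,\overbar{\partial s/\partial x}$ in $\ZZ H$ so that $\Polytope_\pi=\Polytope_{\pi_0}$ under the canonical identification $H_1(G)\cong H_1(G_0)$, and apply the torsion-free case to $G_0$. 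This is a valid alternative: it trades Pride's theorem for the Karrass--Magnus--Solitar description of torsion in one-relator groups (needed both for $r=s^n$ and for identifying $\langle\langle s\rangle\rangle$ with the normal closure of the torsion elements, so that $G_0$ is functorial in $G$), at the cost of routing the torsion case through the full agrarian argument for $G_0$. Both treatments then recover the marking identically from $\Sigma^1(G)$ via \cite[Theorem~1.1]{FT2015}.
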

The notation $\Pag{D_r}(G)$ stands for the \emph{agrarian} polytope, as introduced in \cite{HK2019}, defined over the rationalisation $D_r$ of a skew field $D$. In fact, $\Pag{D_r}(G)$ is an invariant defined for any torsion-free two-generator one-relator group $G$ other than the free group on two generators, even if $b_1(G)=1$.

The second conjecture is also confirmed, assuming that $G$ is torsion-free:
\begin{mainthm}[\ref{theo:tgor:thickness}]
Let $G$ be a torsion-free two-generator one-relator group other than the free group on two generators. Then for every epimorphism $\phi\colon G\to\ZZ$ we have
\[c(G,\phi)=c_f(G, \phi)=\thickness_\phi(\Pag{D_r}(G)) + 1.\]
\end{mainthm}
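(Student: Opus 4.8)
The plan is to sandwich both complexities between the agrarian quantity, by proving
\[
c(G,\phi)\ \geq\ \thickness_\phi\bigl(\Pag{D_r}(G)\bigr)+1
\qquad\text{and}\qquad
c_f(G,\phi)\ \leq\ \thickness_\phi\bigl(\Pag{D_r}(G)\bigr)+1 ;
\]
together with the obvious inequality $c(G,\phi)\leq c_f(G,\phi)$, these force all three numbers to coincide. Throughout, torsion-freeness of $G$ is used only to produce an agrarian embedding $\ZZ G\hookrightarrow D$, hence the rationalisation $D_r$ and the polytope $\Pag{D_r}(G)$; the group $F_2$ is excluded because its defining relator is trivial. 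I shall use the description (as in \cite{FT2015}) of a splitting of $G$ along $\phi$ as an HNN decomposition $G=\langle A,t\mid t\,\iota_+(c)\,t^{-1}=\iota_-(c)\ (c\in C)\rangle$ with $\phi(A)=0$ and $\phi(t)=1$, and of $c(G,\phi)$ (respectively $c_f(G,\phi)$) as the minimum of $1-\chi(C)$ over all such splittings with $C$ finitely generated (respectively finitely generated free, in which case $1-\chi(C)=\rk C$).

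The upper bound is the combinatorial half of the Friedl--Tillmann picture and needs no hypothesis on $G$ beyond the presentation. Starting from a two-generator one-relator presentation $\pi=\langle x,y\mid r\rangle$ of $G$, one normalises $\phi$ and $r$ by an automorphism of the free group on $\{x,y\}$ and a cyclic reduction, and then runs Brown's algorithm \cite{Brown1987} as in \cite{FT2015}: this yields an explicit HNN decomposition of $G$ along $\phi$ whose edge group is free of rank $\thickness_\phi(\Polytope_\pi)+1$, read off from the extent of the Newton polytope of $r$ in the direction $\phi$. When $b_1(G)=2$, Main Theorem~\ref{theo:tgor:invariance} gives $\Polytope_\pi=\Pag{D_r}(G)$; when $b_1(G)=1$ one instead identifies $\Pag{D_r}(G)$ directly as the Newton polytope of the Dieudonné determinant of a Fox derivative $\partial r/\partial x$ and checks that its $\phi$-thickness is this same rank minus one. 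Either way $c_f(G,\phi)\leq\thickness_\phi(\Pag{D_r}(G))+1$.

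For the lower bound, let $G=\langle A,t\mid\cdots\rangle$ be any splitting of $G$ along $\phi$; we may assume $A$ and $C$ finitely presented. Assemble a finite $2$-complex for $G$ from a presentation complex of $A$ together with the mapping cylinders of $\iota_\pm\colon C\to A$, and tensor its cellular $\ZZ G$-chain complex with $D_r$, graded by $\phi\colon G\twoheadrightarrow\ZZ=\langle t\rangle$; the resulting Mayer--Vietoris sequence expresses the agrarian chain complex of $G$ through those of $A$ and $C$. Since $\ZZ C\hookrightarrow D$ is again an agrarian embedding one has $\sum_i(-1)^i\dim_D H_i(C;D)=\chi(C)$, and $\phi|_A=0$ means the $A$-part carries no $t$-grading; feeding this into the polytope estimates of \cite{HK2019} bounds $\thickness_\phi$ of the Newton polytope of the agrarian Alexander torsion of $G$, which is by definition $\thickness_\phi(\Pag{D_r}(G))$, above by $-\chi(C)$. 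Minimising over all splittings of $G$ along $\phi$ yields $\thickness_\phi(\Pag{D_r}(G))+1\leq c(G,\phi)$, completing the sandwich.

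I expect this last step to be the main obstacle: the agrarian polytope must be shown to constrain \emph{every} splitting along $\phi$ — including those over non-free edge groups and with edge maps that are far from surjective on $D$-homology — and with a normalisation sharp enough that the resulting bound is exactly met by the free splitting produced by Brown's algorithm, so that no slack remains. Technically this means carrying the agrarian torsion through the composite $\ZZ G\hookrightarrow D\hookrightarrow D_r$ and through the Mayer--Vietoris sequence of an HNN extension with skew-field coefficients while keeping the Newton-polytope bookkeeping exact, which is where the machinery of \cite{HK2019} does the real work. A secondary, more routine difficulty is the $b_1(G)=1$ case of the upper bound, where the Friedl--Tillmann polytope is unavailable and the agreement of Brown's output with $\thickness_\phi(\Pag{D_r}(G))$ has to be verified directly from the agrarian Alexander polynomial of the presentation $2$-complex.
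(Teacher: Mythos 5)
Your overall sandwich structure ($c_f\leq\thickness_\phi+1$ via Brown's algorithm, $c\geq\thickness_\phi+1$ via a splitting-to-chain-complex argument) is the same as the paper's, and the upper-bound half is essentially correct: it is~\cite[Proposition~7.3]{FT2015} for nice presentations, with the simple presentations $\langle x,y\mid r\rangle$ (in particular $r=x^n$) handled as separate easy cases. The lower bound, however, is where the content lies, and as written it has genuine gaps. First, you have quietly changed the definition of splitting complexity: the paper defines $c(G,\phi)$ as the minimum of $\rk(B)$, the minimal \emph{number of generators} of the associated group $B$, whereas you minimise $1-\chi(C)$. For non-free edge groups these quantities differ, so even a correct Mayer--Vietoris computation of $-\chiag[D_r](G;\phi)$ in terms of $\chi(C)$ would bound the wrong number. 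Second, you assume $A$ and $C$ finitely presented; the definition only gives finite generation, and there is no reason a minimising splitting has finitely presented base. The paper deals with this by taking a countable, possibly infinite, presentation of $A$ and observing that the extra relators contribute only rows with entries in $\ZZ A$ (no $t$), which are harmless for the final estimate.

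Third and most importantly, the step you yourself flag as ``the main obstacle'' --- why the $\phi$-thickness is bounded above by $\rk(B)-1$ for an \emph{arbitrary} splitting --- is precisely the part that needs an argument, and ``the polytope estimates of~\cite{HK2019}'' do not supply one: no bound of thickness by edge-group rank appears there. The paper's route is: (i) convert $\thickness_\phi(\Pag{D_r}(G))$ into the twisted Euler characteristic $-\chiag[D_r](G;\phi)$ via \cref{theo:twisted:thickness}; (ii) write down the Fox-derivative chain complex of the Cayley $2$-complex of the HNN presentation $\langle g_1,\dots,g_k,t\mid r_1,r_2,\dots,\mu(x_j)^{-1}tx_jt^{-1}\rangle$ and note that only the $c$ stable-letter relations produce $t$-terms in the derivatives $\partial/\partial g_i$, so the relevant block $M_1$ has the form $A+tB$ with $\rk_{D_r} B\leq c$; and (iii) apply the strengthened Harvey bound (\cref{prop:tgor:harvey}) together with \cref{lemm:twisted:laurent_computation} and rank--nullity to get $-\chiag[D_r](G;\phi)\leq c-1$. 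Without an analogue of steps (i)--(iii) your lower bound remains a plausible programme rather than a proof.
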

Here, $c(G,\phi)$ stands for the splitting complexity, and $c_f(G, \phi)$ for the free splitting complexity.

Both of these theorems are proven using the machinery of agrarian invariants, introduced by the authors in~\cite{HK2019}.

(After the first version of this article appeared, Jaikin-Zapirain and L\'opez-\'Alvarez~\cite{JL2019} published a proof of the strong Atiyah conjecture for torsion-free one-relator groups.
This provides an alternative proof of the torsion-free case of our results as remarked in~\cite[Remark~5.5]{FL2017} and~\cite[Theorem~5.2]{FLT2016}).

\subsection*{Acknowledgements}
The authors are grateful to Alan Logan for pointing out that~\cref{theo:tgor:invariance} in the case of a group with torsion follows from a result of Pierce.
The first author would further like to thank his advisor Wolfgang Lück as well as Stefan Friedl and Xiaolei Wu for helpful discussions. He is especially grateful to Stefan Friedl for an invitation to Regensburg and the hospitality experienced there.

The present work is part of the first author's PhD project at the University of Bonn. He was supported by Wolfgang Lück's ERC Advanced Grant ``KL2MG-interactions'' (no. 662400) granted by the European Research Council. The second author was supported by the grant KI 1853/3-1 within the Priority Programme 2026 \href{https://www.spp2026.de/}{`Geometry at Infinity'} of the German Science Foundation (DFG).

\section{Agrarian invariants}
The second author introduced the notion of an agrarian group in~\cite{Kielak2018}.
In~\cite{HK2019}, the authors then developed a theory of algebraic invariants of nice spaces with an action of an agrarian group, which proceeds in analogy to the construction of $L^2$-invariants.
In this section, we will review the constructions and properties of these invariants, namely agrarian Betti numbers, agrarian torsion and agrarian polytopes, inasmuch as they are relevant to the proofs of our main results.
For a full introduction, which also contains comparisons to $L^2$-invariants and a discussion of the dependence of agrarian invariants on the choice of an agrarian embedding, we refer the reader to~\cite{HK2019}.
We will mostly follow the presentation therein, but use a different approach to the definition of agrarian torsion that is better suited for our computational purposes.

\subsection{Agrarian groups and associated Ore embeddings}

The key player in our story will be an integral group ring $\ZZ G$.
Throughout the paper, all tensor products will be understood to be taken over $\ZZ G$ unless explicitly indicated otherwise.

\begin{defi}
    Let $G$ be a non-trivial group.
    An \emph{agrarian embedding for} $G$ is an injective ring homomorphism $\alpha\colon \ZZ G\hookrightarrow D$ with $D$ a skew field.
    If $G$ admits an agrarian embedding (into a skew field $D$), it is called a \emph{($D$-)agrarian group}.
\end{defi}

An agrarian group is always torsion-free.
Examples of agrarian groups are given by torsion-free groups satisfying the Atiyah conjecture over $\QQ$~\cite[Theorem~10.39]{Luck2002} as well as by torsion-free one-relator groups~\cite{LL1978}.
For a more detailed discussion of examples and the inheritance properties enjoyed by agrarian groups, see \cite{Kielak2018}.

In order to construct new agrarian embeddings out of given ones, we will need to consider twisted group rings:
\begin{defi}
Let $R$ be a ring and let $G$ be a group. Let functions $c\colon G\to\Aut(R)$ and $\tau \colon G\times G\to R^\times$ be such that
\begin{align*}
    c(g)\circ c(g') &= c_{\tau(g, g')} \circ c(gg')\\
    \tau(g,g')\tau(gg', g'') &= c(g)(\tau(g', g''))\tau(g, g'g''),
\end{align*}
where $g,g', g''\in G$, and where $c_r\in \Aut(R)$ for $r\in R^\times$ denotes the conjugation map $x\mapsto rxr^{-1}$. The functions $c$ and $\tau$ are called \emph{structure functions}. We denote by $RG$ the free $R$-module with basis $G$ and write elements of $RG$ as finite $R$-linear combinations $\sum_{g\in G} \lambda_g\ast g$ of elements of $G$. When convenient, we shorten $1\ast g$ to $g$. The structure functions endow $RG$ with the structure of an (associative) \emph{twisted group ring} by declaring
\[
g \cdot (r \ast 1) = c(g)(r) \ast g \textrm{ and }
g \cdot g'= \tau(g,g') \ast gg'
\]
and extending linearly.
\end{defi}

The usual, \emph{untwisted} group ring is obtained from the definition by taking the structure functions to be trivial.
In the following, group rings with $R=\ZZ$ will always be understood to be untwisted.

The fundamental example of a twisted group ring arises in the following way:
\begin{exam}
\label{exam:agrarian:twisted}
Let $\phi\colon G\twoheadrightarrow H$ be a group epimorphism with kernel the normal subgroup $K\leq G$.
We choose any section $s\colon H\to G$ of the map of sets underlying $\phi$, i.e., a map such that $\phi \circ s = \id_H$.
We denote by $(\ZZ K)H$ the twisted group ring defined by the structure functions $c(h)(r)=s(h)rs(h)^{-1}$ and $\tau(h, h')=s(h)s(h')s(hh')^{-1}$.
The untwisted group ring $\ZZ G$ is then isomorphic to the twisted group ring $(\ZZ K)H$ via the map
\[g\mapsto \left(g \cdot (s\circ\phi)(g)^{-1}\right)\cdot\phi(g).\]
\end{exam}

The twisted group ring construction will enable us  to construct out of a given agrarian embedding for a group $G$ new agrarian embeddings with better properties.

Recall that a ring $R$ without non-trivial zero divisors satisfies the \emph{Ore condition} if for every $p,q \in R$ with $q \neq 0$ there exists $r,s \in R$ with $s \neq 0$ such that
\[
ps = qr.
\]
This identity enables the conversion of a left fraction $ q^{-1} p$ into a right fraction $r s^{-1}$, which in turn makes it possible to multiply fractions (in the obvious way). The Ore condition also guarantees the existence of common denominators, and thus allows for addition of fractions. Thanks to these properties, the ring $R$ embeds into its \emph{Ore field of fractions}
\[\Ore(R)\coloneqq\{ q^{-1} p \mid p,q \in R, q \neq 0 \},\]
which is evidently a skew field.
We refer the reader to the book of Passman~\cite[Section 4.4]{Passman1985} for details and proofs.

\begin{lemm}
\label{lemm:agrarian:ore_field}
Let $\alpha\colon \ZZ G\to D$ be an agrarian embedding for a finitely generated group $G$, and let $K\leq G$ be a normal subgroup such that $H\coloneqq G/K$ is free abelian.
Then $\alpha$ induces an injective ring homomorphism
\[(\ZZ K)H \hookrightarrow DH,\]
where $(\ZZ K)H$ is as defined in \cref{exam:agrarian:twisted}, and $DH$ is a twisted group ring with the same structure functions as $(\ZZ K)H$. Furthermore, $DH$ admits an Ore field of fractions $\Ore(DH)$ and we obtain an agrarian embedding
\[\alpha_K\colon \ZZ G\cong (\ZZ K)H\hookrightarrow DH\hookrightarrow \Ore(DH),\]
which we call the \emph{$K$-rationalisation} of $\alpha$.
\end{lemm}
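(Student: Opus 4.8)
The plan is to establish the three assertions in turn: that $\alpha$ induces a well-defined injective ring homomorphism $(\ZZ K)H\hookrightarrow DH$, that $DH$ is a Noetherian domain (hence an Ore ring), and that the resulting composite is an agrarian embedding.

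\emph{The induced map.} I would fix the section $s\colon H\to G$ used in \cref{exam:agrarian:twisted} to build $(\ZZ K)H$, so that $c(h)(r)=s(h)rs(h)^{-1}$ for $r\in\ZZ K$ and $\tau(h,h')=s(h)s(h')s(hh')^{-1}\in K$. Since $\alpha$ is injective and $s(h)$ is a unit of $\ZZ G$, the element $\alpha(s(h))$ is a unit of the skew field $D$, so one may define structure functions on $DH$ by $c'(h)=c_{\alpha(s(h))}$ and $\tau'(h,h')=\alpha(\tau(h,h'))\in D^\times$. The identity $\tau(h,h')s(hh')=s(h)s(h')$ together with the cocycle relations of \cref{exam:agrarian:twisted} shows, by a direct computation, that $(c',\tau')$ are structure functions; thus $DH$ is a well-defined associative twisted group ring, carrying the ``same'' structure functions as $(\ZZ K)H$ in the precise sense that $c'(h)\circ\alpha=\alpha\circ c(h)$ on $\ZZ K$ and $\tau'=\alpha\circ\tau$. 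The additive extension $\sum_h\lambda_h\ast h\mapsto\sum_h\alpha(\lambda_h)\ast h$ then respects both defining relations of the twisted product, hence is a ring homomorphism $(\ZZ K)H\to DH$; it is injective because $\alpha$ is injective on $\ZZ K$ and source and target are free on the basis $H$ over $\ZZ K$ and $D$ respectively.

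\emph{$DH$ is an Ore domain.} This is the heart of the matter. Since $G$ is finitely generated, so is $H$, whence $H\cong\ZZ^n$; I would choose a filtration $0=H_0\leq H_1\leq\dots\leq H_n=H$ by direct summands with $H_i/H_{i-1}\cong\ZZ$. Each $DH_i$ is a twisted group subring of $DH$, and, $\ZZ$ being free, one may rechoose the basis element over a generator of $H_i/H_{i-1}$ so as to trivialise the relevant $2$-cocycle; this identifies $DH_i$ with a skew Laurent polynomial ring $(DH_{i-1})_\sigma[t^{\pm1}]$ over $DH_{i-1}$. As a skew Laurent extension of a Noetherian domain is again a Noetherian domain, induction on $i$ starting from the skew field $DH_0=D$ shows that $DH=DH_n$ is a Noetherian domain. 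Finally, a right Noetherian domain satisfies the right Ore condition: given nonzero $a,b$, if $aR\cap bR=0$ then the sum $\sum_{i\geq 0}a^ibR$ is direct and its partial sums form a strictly ascending chain of right ideals, a contradiction; hence $aR\cap bR\neq 0$, which is exactly the Ore condition. By the symmetric argument $DH$ also satisfies the left Ore condition, so $\Ore(DH)$ exists and contains $DH$.

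\emph{Conclusion.} Composing the isomorphism $\ZZ G\cong(\ZZ K)H$ of \cref{exam:agrarian:twisted}, the injection $(\ZZ K)H\hookrightarrow DH$ constructed above, and the localisation $DH\hookrightarrow\Ore(DH)$ yields an injective ring homomorphism $\alpha_K\colon\ZZ G\hookrightarrow\Ore(DH)$ into a skew field, as required. I expect essentially all the genuine content to lie in the middle step — recognising the iterated twisted group ring of $\ZZ^n$ over a skew field as a Noetherian domain; the first and third steps are formal verifications.
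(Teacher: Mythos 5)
Your proof is correct and complete. The paper itself offers no argument for this lemma---it simply defers to \cite{HK2019} (Definition~2.6 and the preceding discussion)---and your route is precisely the standard one underlying that reference: push the structure functions forward along $\alpha$ (legitimate since $s(h)$ and $\tau(h,h')$ are group elements, hence units in $D$), identify $DH$ for $H\cong\ZZ^n$ with an iterated skew Laurent polynomial extension of the skew field $D$ to conclude it is a Noetherian domain, and then invoke the classical fact (via the directness of $\sum_i a^ibR$) that a Noetherian domain is an Ore domain; so there is nothing to add.
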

\begin{proof}
See \cite[Definition~2.6]{HK2019} and the preceding discussion.
\end{proof}

Observe that while the map $\alpha_K$ certainly depends on the choice of a section of the projection $G\to G/K$, it follows from \cite[Lemma~2.5]{HK2019} that the target skew field is unique up to isomorphism.
For the purposes of this paper, we will assume that such a section has been chosen once and for all for any group under consideration, and therefore always speak of \emph{the} $K$-rationalisation of an agrarian embedding for $G$.

The smallest choice for $K$ in \cref{lemm:agrarian:ore_field} is clearly the kernel of the projection of $G$ onto the free part of its abelianisation.
Since the $K$-rationalisation for this particular choice of $K$ will be most useful for us, we introduce special notation for it:
\begin{defi}
\label{defi:agrarian:rationalisation}
Let $\alpha\colon \ZZ G\to D$ be an agrarian embedding for a finitely generated group $G$.
Further let $H$ be the free part of the abelianisation of $G$ and $K$ the kernel of the projection of $G$ onto $H$.
The $K$-rationalisation of $\alpha$ for this particular choice of $K$ is simply called the \emph{rationalisation} and is denoted by $\alpha_r$.
The target skew field of $\alpha_r$ is also denoted by $D_r$.
\end{defi}

The following lemma essentially states that taking iterated ``partial'' rationalisations with respect to a chain $K\leq K'\leq G$ of normal subgroups is naturally equivalent to the ``full'' rationalisation:
\begin{lemm}
\label{lemm:agrarian:ore_comparison}
Let $G$ be a finitely generated agrarian group with agrarian embedding $\alpha\colon \ZZ G\hookrightarrow D$. Denote by $\pr\colon G\to H$ the projection onto the free part $H$ of the abelianisation of $G$. Let $\phi\colon G\to H'$ be an epimorphism onto a finitely generated free abelian group, inducing the following commutative diagram of epimorphisms:
\[
\begin{tikzcd}
G\arrow[r,"\pr",two heads]\arrow[dr,"\phi",swap,two heads] & H\arrow[d,"\overbar\phi",two heads]\\
&H'
\end{tikzcd}
\]
Denote the kernels of $\pr$, $\phi$ and $\overbar\phi$ by $K$, $K_\phi$ and $K_{\overbar\phi}$, respectively. Further let $s$ and $t$ be sections of the epimorphisms $\pr$ and $\overbar \phi$, respectively. Then
\begin{align*}
\beta\colon(DK_{\overbar\phi})H'&\to DH\\
\sum_{h'\in H'}\Big (\sum_{k\in K_{\overbar\phi}} u_{k, h'}\ast k\Big )\ast h'&\mapsto \sum_{\substack{\mathllap{h'}\in \mathrlap{H'} \\ \mathllap{k}\in \mathrlap{K_{\overbar\phi}}}} u_{k,h'} \ast k t(h')
\end{align*}
is an isomorphism between twisted group rings constructed using the sections $s$, $t$ and $s\circ t$. It extends to an isomorphism
\[
\beta\colon\Ore(\Ore(DK_{\overbar\phi})H')\xrightarrow{\cong}\Ore(DH)
\]
of skew fields.
\end{lemm}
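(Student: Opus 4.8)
The plan is to first prove that $\beta$ is an isomorphism of (abstract) rings, and then to upgrade this to the claimed isomorphism of skew fields by a formal localisation argument. For the first part the only genuine content is to pin down the twisted group ring structures involved. The group $H$ sits in a short exact sequence $1\to K_{\overbar\phi}\to H\xrightarrow{\overbar\phi}H'\to 1$, and since $\phi\circ s=\overbar\phi\circ\pr\circ s=\overbar\phi$ we have $s(K_{\overbar\phi})\subseteq K_\phi$; hence $s$ restricts to a section $K_{\overbar\phi}\to K_\phi$ of the projection whose structure functions — conjugation by $s(k)$, and the cocycle $\tau(k,k')=s(k)s(k')s(kk')^{-1}$ — are precisely the restrictions of the structure functions defining $DH$. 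So $DK_{\overbar\phi}$ is nothing but the $D$-subalgebra of $DH$ spanned by $K_{\overbar\phi}$. Normalising $s$ and $t$ so that $s(1)=t(1)=1$, every element of $H$ is uniquely of the form $k\cdot t(h')$ with $k\in K_{\overbar\phi}$, $h'\in H'$; it follows both that $DH=\bigoplus_{h'\in H'}(DK_{\overbar\phi})\cdot t(h')$ is free over $DK_{\overbar\phi}$ with basis $\{t(h')\}_{h'\in H'}$, and that $\beta$ carries the $D$-basis $\{(1\ast k)\ast h'\}_{k\in K_{\overbar\phi},\,h'\in H'}$ of $(DK_{\overbar\phi})H'$ bijectively onto the basis $H$ of $DH$, so that $\beta$ is at least a left-$D$-linear bijection. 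That $\beta$ is also multiplicative — with the outer twisting of $H'$ over $DK_{\overbar\phi}$ taken with respect to the section $s\circ t$ of $\phi$ — is a direct check on basis elements using the two cocycle identities for the structure functions; it is exactly the computation underlying \cref{exam:agrarian:twisted}, performed one level higher, and I would not grind through it.

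For the second part, note first that $DK_{\overbar\phi}$, $DH$ and $\Ore(DK_{\overbar\phi})H'$ are all twisted group rings of finitely generated free abelian groups over a skew field, hence are Ore domains (as in the proof of \cref{lemm:agrarian:ore_field}); in particular every Ore localisation in the statement exists. On coefficients $\beta$ is the inclusion $DK_{\overbar\phi}\hookrightarrow DH$, so by the universal property of the Ore localisation it extends to an embedding $\Ore(DK_{\overbar\phi})\hookrightarrow\Ore(DH)$, and because the structure functions of $\Ore(DK_{\overbar\phi})H'$ coincide with those of $(DK_{\overbar\phi})H'$ (the relevant conjugation automorphisms of $\Ore(DH)$ agree with the ones coming from $\Ore(DK_{\overbar\phi})$, since they do so on the generating subring $DK_{\overbar\phi}$) this extends $\beta$ further to a ring homomorphism $\widetilde\beta\colon\Ore(DK_{\overbar\phi})H'\to\Ore(DH)$. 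This $\widetilde\beta$ is injective: from $\widetilde\beta(\sum_{h'}r_{h'}\ast h')=0$ one clears a common left denominator in $DK_{\overbar\phi}$ to reduce to the case $r_{h'}\in DK_{\overbar\phi}$, and then invokes freeness of $DH$ over $DK_{\overbar\phi}$ with basis $\{t(h')\}$ to conclude that each $r_{h'}=0$. Its image is an Ore domain lying between $DH=\beta\big((DK_{\overbar\phi})H'\big)$ and $\Ore(DH)$, and since the smallest sub-skew-field of $\Ore(DH)$ containing $DH$ is all of $\Ore(DH)$, passing to Ore fields of fractions turns $\widetilde\beta$ into an isomorphism $\Ore(\Ore(DK_{\overbar\phi})H')\xrightarrow{\cong}\Ore(DH)$ extending $\beta$.

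I expect the only real obstacle to be organisational: three sections $s$, $t$, $s\circ t$ and two nested Ore localisations are in play at once, and one has to keep careful track of which twisted group ring structure lives where, the crucial input being merely that $s(K_{\overbar\phi})\subseteq K_\phi$. Once the ring isomorphism of the first part is in hand, the passage to skew fields is the routine fact that an iterated Ore localisation is again a single Ore localisation.
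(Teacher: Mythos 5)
Your proposal is correct and takes essentially the same route as the paper's proof: identify the left $D$-bases $\{k\ast h'\}$ and $\{kt(h')\}$ to see that $\beta$ is a $D$-linear bijection, declare the multiplicativity check a direct computation to be omitted, and then extend to the skew fields by observing that $DK_{\overbar\phi}$ is a subring of the domain $DH$ and Ore localising twice. The extra bookkeeping you supply (the normalisations and the observation $s(K_{\overbar\phi})\subseteq K_\phi$ pinning down the structure functions) only makes explicit what the paper leaves implicit.
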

\begin{proof}
Left $D$-bases of $(DK_{\overbar\phi})H'$ and $DH$ are given by $k\ast h'$ and $kt(h')$ respectively for $k\in K_{\overbar\phi}$ and $h'\in H'$. These bases are identified bijectively by $\beta$ with inverse $h\mapsto h t(\overbar\phi(h)^{-1})\ast\overbar\phi(h)$. It follows that $\beta$ is an isomorphism of left $D$-modules. Checking that $\beta$ respects the twisted group ring multiplication is a tedious but direct computation that we will omit.

Since $DK_{\overbar\phi}$ is a subring of $DH$, and since the rings have no non-trivial zero divisors, $\beta$ extends to an injection $\Ore(DK_{\overbar\phi})H'\hookrightarrow \Ore(DH)$ that contains $DH$ in its image. Ore localising again, this implies that $\beta$ extends to an isomorphism $\Ore(\Ore(DK_{\overbar\phi})H')\to \Ore(DH)$.
\end{proof}

\subsection{Agrarian Betti numbers}

Given an agrarian embedding $\ZZ G\hookrightarrow D$ for a group $G$, we can associate to any $\ZZ G$-chain complex the $D$-dimensions of its $D$-homology groups, which can be viewed as equivariant analogues of Betti numbers:

\begin{defi}
    Let $G$ be an agrarian group with a fixed agrarian embedding $\alpha\colon G\hookrightarrow D$.
    For a $\ZZ G$-chain complex $C_*$ and $n\in\ZZ$, the \emph{$n$-th $D$-Betti number} of $C_*$ with respect to the agrarian embedding $\alpha$ is defined as
    \[\bag_n(C_*)\coloneqq \dim_D H_n(D\otimes C_*)\in \NN\sqcup \{\infty\},\]
    where $D$ becomes a right $\ZZ G$-module via $\alpha$.
    If $\bag_n(C_*)=0$ for all $n\in\ZZ$, then $C_*$ is called \emph{$D$-acyclic}.
\end{defi}

We will usually consider agrarian Betti numbers of suitably well-behaved spaces with an action of an agrarian group $G$. Recall that a \emph{$G$-CW-complex} is a CW-complex with a (left) $G$-action that maps $p$-cells to $p$-cells in such a way that any cell mapped into itself is already fixed pointwise. A $G$-CW-complex is called \emph{free} if its $G$-action is free. A $G$-orbit of a cell in the underlying CW-complex is called a \emph{$G$-cell}, with respect to which we understand the qualifiers \emph{finite} and $\emph{of finite type}$. Note that the cellular chain complex of a G-CW-complex naturally has the structure of a (left) $\ZZ G$-chain complex.

If we take $C_*$ to be the cellular $\ZZ G$-chain complex of a $G$-CW-complex, we obtain a notion of agrarian Betti numbers for such spaces.
It turns out that these invariants satisfy most of the well-known properties of non-equivariant Betti numbers.
For example, at least for finite free $G$-CW-complexes, they are homotopy invariant, compute the same Euler characteristic and are bounded from above by the number of equivariant cells.
They also behave similarly to $L^2$-Betti numbers as they vanish in dimension 0 and, if $G$ is amenable, in every dimension.
As these properties will not be used in the present work, we refer the reader to~\cite[Theorem~3.9]{HK2019} for the precise statements.

\subsection{Agrarian torsion}

Let $G$ be an agrarian group with a fixed agrarian embedding $\alpha \colon \ZZ G\hookrightarrow D$.
We write $D^\times$ for the group of units of $D$ and denote its abelianisation by $\Dab$.
The canonical projection $D^\times \to\Dab$ can be extended uniquely to a non-commutative notion of a determinant, the \emph{Dieudonn\'{e} determinant}, as follows.
We denote by $\GL(D)$ the group of all finite invertible matrices with entries in $D$, where every matrix is identified with any matrix obtained from it by adding an identity block in the bottom-right corner.
Then by~\cite[Theorem~2.2.5]{Rosenberg1994}, there is a unique group homomorphism $\det_D\colon \GL(D)\to\Dab$ with the following properties:
\begin{enumerate}
    \item $\det_D$ is invariant under elementary row operations;
    \item $\det_D$ maps the identity matrix to 1;
    \item $\det_D(\mu\cdot A)=\overbar\mu \cdot \det_D(A)$ for $A\in\GL(D)$ and $\mu\in D^\times$ with image $\overbar\mu\in \Dab$.
\end{enumerate}

If $C_*$ is now a finite free $\ZZ G$-chain complex that is $D$-acyclic with respect to $\alpha$, then the $D$-chain complex $D\otimes_{\ZZ G} C_*$ will be contractible.
In~\cite{HK2019}, the agrarian torsion $\torsionag(C_*)$ of such a chain complex $C_*$ together with a choice of a basis was defined  as a non-commutative $\Dab$-valued Reidemeister torsion in the sense of~\cite{Cohen1973}.
First, out of a chain contraction of $C_*$,an element of the reduced $K$-group $\Kr(D)$ is constructed, which is then mapped to $\Dab$ via a map induced by the Dieudonn\'{e} determinant of $D$.
For the details of this definition, we refer the reader to~\cite[Section~4]{HK2019}.

While the construction of agrarian torsion in~\cite{HK2019} is well-suited for the comparison to $L^2$-torsion, for our current purposes a slightly different way of computing agrarian torsion is more convenient.

We will use concepts and notation from~\cite[I.2.1]{Turaev2001}. Assume that we are given a $D$-acyclic finite free $\ZZ G$-chain complex $C_*$ concentrated in degrees $0$ through $m$, which is equipped with a choice of a preferred basis.
By fixing an ordering of the preferred basis, we identify subsets of $\{1, \dots, \rk C_p\}$ with subsets of the preferred basis elements of $C_p$.
We then denote by $A_p$, for $p=0,\dots, m-1$, the matrix representing the differential $c_{p+1}\colon C_{p+1}\to C_p$ in the preferred bases.
Note the shift in grading between $A_p$ and $c_{p+1}$, which is needed in order to bring our notation in line with that of Turaev.
The matrix $A_p$ consists of the entries $a^p_{jk}\in \ZZ G$, where $j=1,\dots,\rk C_{p+1}$ and $k=1,\dots,\rk C_p$.

\begin{defi}
A \emph{matrix chain} for $C_*$ is a collection of sets $\gamma=(\gamma_0,\dots, \gamma_m)$, where $\gamma_p\subseteq \{1, \dots, \rk C_p\}$ and $\gamma_0=\emptyset$. Write $S_p=S_p(\gamma)$ for the submatrix of $A_p$ formed by the entries $a^p_{jk}$ with $j\in\gamma_{p+1}$ and $k\not\in\gamma_p$.
A matrix chain $\gamma$ is called a \emph{$\tau$-chain} if $S_p$ is a square matrix for $p=0,\dots, m-1$.
A $\tau$-chain $\gamma$ is called \emph{non-degenerate} if $\det_D(S_p)\neq 0$ for all $p=0,\dots,m-1$.
\end{defi}

We want to point out that the reference~\cite[I.2.1]{Turaev2001} only considers chain complexes over a commutative field $\mathbb{F}$.
Nonetheless, all statements and proofs directly carry over to our setting of chain complexes over a skew field $D$ if we throughout replace the commutative determinant $\det_{\mathbb{F}}\colon \GL(\mathbb{F})\to \mathbb{F}^\times$ with the Dieudonn\'{e} determinant $\det_D$.
In particular, there is still a well-behaved notion of the \emph{rank} of a matrix $A$ over a skew field $D$, which can be defined in any of the following equivalent ways:
\begin{itemize}
    \item the largest number $r$ such that $A$ contains an invertible $r\times r$-submatrix;
    \item the $D$-dimension of the image of the linear map of left $D$-vector space given by right multiplication by $A$;
    \item the $D$-dimension of the right $D$-vector space spanned by the columns of $A$ (the \emph{column rank});
    \item the $D$-dimension of the left $D$-vector space spanned by the rows of $A$ (the \emph{row rank}).
\end{itemize}
With this convention, the proofs in~\cite[I.2.1]{Turaev2001} carry over verbatim.

Taken together, Theorem~I.2.2 and Remark~I.2.7 in \cite{Turaev2001} imply that any non-degenerate $\tau$-chain can be used to compute the agrarian torsion of $C_*$ as defined in \cite[Definition~4.7]{HK2019} and such a $\tau$-chain always exists if the complex is $D$-acyclic.
Note though that Turaev's convention for torsion differs from the one used in \cite{HK2019} in that he writes torsion multiplicatively instead of additively and uses the inverse of the torsion element in $\Kr(D)$ we construct, see~\cite[Theorem~I.2.6]{Turaev2001}.
Correcting for these differences by inserting a sign, we obtain
\begin{theo}
\label{theo:twisted:matrix_chain}
For any non-degenerate $\tau$-chain $\gamma$ of a $D$-acyclic finite free $\ZZ G$-chain complex $C_*$ with a choice of a preferred basis, we have
\[\torsionag(C_*)=\sum_{p=0}^{m-1} (-1)^p \det\nolimits_D(S_p(\gamma)) \in \Dab/\{\pm 1\}.\]
Furthermore, any $D$-acyclic finite free $\ZZ G$-chain complex with a choice of a preferred basis admits a non-degenerate $\tau$-chain.
\end{theo}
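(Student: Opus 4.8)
The plan is to deduce the theorem from the analogous statement for based acyclic chain complexes over a field, as developed in \cite[I.2.1]{Turaev2001}, after systematically replacing the commutative determinant by the Dieudonné determinant $\det_D$.

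First I would base change: applying $D\otimes-$ to $C_*$, with $D$ a right $\ZZ G$-module via $\alpha$, yields a finite chain complex of left $D$-vector spaces that is acyclic precisely because $C_*$ is $D$-acyclic. The preferred basis of $C_*$ descends to a preferred basis of $D\otimes C_*$, under which the differentials are represented by the matrices $A_p$ of the statement, now viewed over $D$ via $\alpha$. By \cite[Definition~4.7]{HK2019}, the agrarian torsion $\torsionag(C_*)$ is the image in $\Dab/\{\pm1\}$, under the homomorphism induced by $\det_D$, of the Reidemeister torsion of this based acyclic $D$-complex built from a chain contraction in the sense of \cite{Cohen1973}.

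Next I would check that every argument in \cite[I.2.1]{Turaev2001} goes through verbatim over the skew field $D$. The only properties of the determinant used there are multiplicativity in the abelianised group of units, invariance under elementary row and column operations, and the coincidence of the various notions of matrix rank. The first two are exactly the defining properties of $\det_D$ from \cite[Theorem~2.2.5]{Rosenberg1994}, and the third is the list of equivalent descriptions of $\rk$ over $D$ recalled above; in particular, no expansion into minors or Cramer's rule is ever invoked. Consequently \cite[Theorem~I.2.2]{Turaev2001} expresses the torsion of $D\otimes C_*$, through any non-degenerate $\tau$-chain $\gamma$, as an alternating product of the elements $\det_D(S_p(\gamma))$, and \cite[Remark~I.2.7]{Turaev2001} supplies a non-degenerate $\tau$-chain for the acyclic complex $D\otimes C_*$, which gives the second assertion of the theorem.

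Finally I would reconcile conventions. Turaev writes torsion multiplicatively, and by \cite[Theorem~I.2.6]{Turaev2001} his torsion element is the inverse of the $K_1$-valued torsion underlying \cite[Definition~4.7]{HK2019}; rewriting additively in $\Dab$ and inverting turns his alternating product into $\sum_{p=0}^{m-1}(-1)^p\det\nolimits_D(S_p(\gamma))$. Any residual sign ambiguity --- for instance from the ordering chosen on the preferred basis --- is absorbed by the quotient $\Dab/\{\pm1\}$, which is exactly the target of both sides of the claimed identity. The step I expect to be the main obstacle is less a single hard argument than careful bookkeeping: confirming that Turaev's lemmata in \cite[I.2.1]{Turaev2001} use nothing beyond the formal properties of $\det_D$ just listed, and tracking the sign and ordering conventions that relate Turaev's torsion, Cohen's torsion and the agrarian torsion of \cite{HK2019}.
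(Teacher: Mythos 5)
Your proposal matches the paper's own justification essentially verbatim: the paper likewise observes that the arguments of \cite[I.2.1]{Turaev2001} carry over to a skew field once $\det_{\mathbb F}$ is replaced by the Dieudonn\'e determinant and the notion of rank is fixed, then cites Theorem~I.2.2 and Remark~I.2.7 for the formula and the existence of a non-degenerate $\tau$-chain, and finally corrects for Turaev's multiplicative and inverse conventions by a sign absorbed into $\Dab/\{\pm 1\}$. The argument is correct and takes the same route as the paper.
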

In the following, we will use the formula in \cref{theo:twisted:matrix_chain} as the definition of the agrarian torsion $\torsionag(C_*)$.

If $X$ is a finite free G-CW-complex that is $D$-acyclic, then its cellular $\ZZ G$-chain complex $C_*(X)$ will be a $D$-acyclic finite free $\ZZ G$-chain complex.
Up to orientation and the choice of representatives for the free $G$-orbits, the cell structure of $X$ determines a preferred choice of a basis for $C_*(X)$.
This observation leads to the following notion of agrarian torsion for G-CW-complexes:
\begin{defi}
Let $X$ be a $D$-acyclic finite free G-CW-complex. The \emph{$D$-agrarian torsion} of $X$ is defined as
\[\torsionag(X)\coloneqq \torsionag(C_*(X)) \in \Dab/\{\pm g\mid g\in G\},\]
where $C_*(X)$ is endowed with any $\ZZ G$-basis that projects to a $\ZZ$-basis of $C_*(X/G)$ consisting of unequivariant cells.
\end{defi}

\subsection{Agrarian Polytope}
Building on the notions of agrarian Betti numbers and agrarian torsion, we are now able to associate to a $D$-acyclic finite $G$-CW-complex $X$ a polytope.
This polytope, called the \emph{agrarian polytope} of $X$, arises as the convex hull of the support of the associated agrarian torsion, viewed as a quotient of suitable twisted polynomials.
The idea to study the Newton polytope of a torsion invariant goes back to~\cite{FL2017}, where the $L^2$-polytope of a certain subclass of all two-generator one-relator group is defined and used to prove the Friedl--Tillmann conjecture for them.

We begin with polytope-specific terminology:

\begin{defi}
Let $V$ be a finite-dimensional real vector space.
A \emph{polytope} in $V$ is the convex hull of finitely many points in $V$.
For a polytope $P\subset V$ and a linear map $\phi\colon V\to\RR$ we define
\[F_\phi(P)\coloneqq \{p\in P \mid \phi(p)=\min_{q\in P} \phi(q)\}\]
and call this polytope the \emph{$\phi$-face} of $P$.
The elements of the collection
\[\{F_\phi(P)\mid \phi\colon V\to\RR\}\]
 are the \emph{faces} of $P$.
 A face is called a \emph{vertex} if it consists of a single point.
\end{defi}

In the following, the ambient vector space $V$ will always be $\RR \otimes_{\ZZ} H$ for some finitely generated free abelian group $H$.
For such $V$, we will consider a special type of polytope:

\begin{defi}
A polytope $P$ in $V$ is called \emph{integral} if its vertices lie on the lattice $H\subset V$.
\end{defi}

Given two integral polytopes $P$ and $Q$ in $V$, their pointwise or \emph{Minkowski sum} $P + Q=\{p+q\mid p\in P, q\in Q\}$ is again an integral polytope.
Any vertex of the resulting polytope is a pointwise sums of a vertex of $P$ and a vertex of $Q$.
Equipped with the Minkowski sum the set of all integral polytopes in $V$ becomes a cancellative abelian monoid with neutral element $\{0\}$, see~\cite[Lemma~2]{Radstrom1952}.
Hence, the monoid embeds into its Grothendieck group, which was first considered in~\cite[6.3]{FT2015}:
\begin{defi}
Let $H$ be a finitely generated free abelian group.
Denote by $\Poly(H)$ the \emph{polytope group} of $H$, that is the Grothendieck group of the cancellative abelian monoid given by all integral polytopes in $\RR \otimes_{\ZZ} H$ under Minkowski sum.
In other words, let $\Poly(H)$ be the abelian group with generators the formal differences $P-Q$ of integral polytopes and relations $(P-Q)+(P'-Q')=(P+P')-(Q-Q')$ as well as $P-Q=P'-Q'$ if $P+Q'=P'+ Q'$.
The neutral element is given by the one-point polytope $\{0\}$, which we will drop from the notation.
We view $H$ as a subgroup of $\Poly(H)$ via the map $h\mapsto \{h\}$.
\end{defi}

An element of the polytope group that is of the form $P-0$, for which we also just write $P$, is called a \emph{single polytope} and is uniquely represented in this form. Any other element is called a \emph{virtual polytope}.

In order to later get well-defined invariants with values in the polytope group, we will mostly be dealing with the following quotient of the full polytope group:
\begin{defi}
The \emph{translation-invariant polytope group} of $H$, denoted by $\PolyT(H)$, is defined to be the quotient group $\Poly(H)/H$.
\end{defi}

The following simple construction underlies the definition of the $L^2$-polytope in \cite{FL2017} and will also be used to define the agrarian polytope:
\begin{defi}
Let $D$ be a skew field and let $H$ be a finitely generated free abelian group. Let $DH$ denote some twisted group ring formed out of $D$ and $H$. The \emph{Newton polytope} $P(p)$ of an element $p=\sum_{h\in H} u_h \ast h\in DH$ is the convex hull of the \emph{support} $\supp(p)=\{h\in H\mid u_h\neq 0\}$ in $\RR \otimes_{\ZZ} H$.
\end{defi}

Since $H$ is finitely generated free abelian, we can consider the Ore field of fractions $\Ore(DH)$ of the twisted group ring $DH$, just as we did in \cref{lemm:agrarian:ore_field}.
The definition of the Newton polytope can be extended to elements of $\Ore(DH)$ in the following way:
\begin{defi}
The group homomorphism
\begin{align*}
    P\colon \Dab[\Ore(DH)] &\to \Poly(H)\\
    pq^{-1}&\mapsto P(p)-P(q)
\end{align*}
is called the \emph{polytope homomorphism} of $\Ore(DH)$. It induces a homomorphism
\[P\colon \Dab[\Ore(DH)]/\{\pm h\mid h\in H\} \to \PolyT(H).\]
\end{defi}
It is easily verified in~\cite[Lemma~3.12]{Kielak2018} (and the discussion following the lemma) that $P$ is a well-defined group homomorphisms.

We now consider a finitely generated agrarian group $G$ and denote the free part of its abelianisation by $H$.
Let $K$ be the kernel of the projection of $G$ onto $H$.
In~\cite{FL2017}, assuming that the group $G$ satisfies the Atiyah conjecture, the polytope homomorphism is used for the Linnell skew field $\linnelld(G)$, which can conveniently be expressed as an Ore localisation of the twisted group ring $\linnelld(K)H$.
While the target of an arbitrary agrarian embedding $\alpha\colon\ZZ G\hookrightarrow D$ is not necessarily an Ore localisation of a suitable twisted group ring, this is true for its rationalisation, which we introduced in \cref{defi:agrarian:rationalisation}.

\begin{defi}
\label{defi:polytope:agrarian:chain_complex}
Let $\ZZ G\hookrightarrow D$ be an agrarian embedding for $G$ with rationalisation $\ZZ G\to D_r$.
Let $C_*$ be a $D_r$-acyclic finite based free $\ZZ G$-chain complex $C_*$.
The \emph{($D_r$-)agrarian polytope} of $C_*$ is defined as
\[\Pag{D_r}(C_*) \coloneqq P(-\torsionag[D_r](C_*)) \in \Poly(H),\]
where we use the polytope homomorphism associated to the skew field $D_r=\Ore(DH)$.
\end{defi}

The sign in the definition of the $D_r$-agrarian polytope is a matter of convention, but is chosen such that we get a single polytope in many cases of interest.
It is a consequence of \cite[Lemma~2.5]{HK2019} that the agrarian polytope does not depend on the particular choice of structure functions involved in the construction of the twisted group ring $DH$.

In the following, we will always consider the agrarian polytopes associated to cellular chain complexes of $G$-CW-complexes, where we have to account for the indeterminacy caused by choosing a suitable basis made of cells.
Since the $D_r$-agrarian torsion of a $G$-CW-complex naturally lives in $(D_r^\times)_{\textrm{ab}}/\{\pm g\mid g\in G\}$, the associated polytope will only be defined up to translation.

\begin{defi}
\label{defi:polytope:agrarian}
Let $\ZZ G\hookrightarrow D$ be an agrarian embedding for $G$ with rationalisation $\ZZ G\to D_r$.
Let $X$ be a $D_r$-acyclic finite free $G$-CW-complex.
The \emph{($D_r$-)agrarian polytope} of $X$ is defined as
\[\Pag{D_r}(X) \coloneqq \Pag{D_r}(C_*(X)) \in \PolyT(H).\]
\end{defi}

The property of the agrarian polytope that enables our applications is that it is a $G$-homotopy invariant:
\begin{prop}[{\cite[Proposition~5.8]{HK2019}}]
\label{prop:polytope:invariance}
The $D_r$-agrarian polytope $\Pag{D_r}(X)$ is a $G$-homotopy invariant of $X$.
\end{prop}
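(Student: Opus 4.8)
The plan is to deduce the $G$-homotopy invariance of $\Pag{D_r}(X)$ from the fact that agrarian torsion is a simple-homotopy invariant, following the strategy used for the $L^2$-polytope in~\cite{FL2017}. A $G$-homotopy equivalence $f\colon X\to Y$ of finite free $D_r$-acyclic $G$-CW-complexes induces a $\ZZ G$-chain homotopy equivalence $C_*(f)\colon C_*(X)\to C_*(Y)$ of based free $\ZZ G$-chain complexes (the bases coming from the cells). First I would pass to the algebraic mapping cone $\cone(C_*(f))$, a based free $\ZZ G$-chain complex that is chain contractible over $\ZZ G$, hence $D_r$-acyclic, and whose $\ZZ G$-torsion $\tau\in K_1(\ZZ G)$ represents the Whitehead torsion of $f$. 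Applying the standard additivity of Reidemeister torsion (cf.~\cite{Turaev2001}) to the short exact sequence
\[0\to C_*(Y)\to\cone(C_*(f))\to \Sigma C_*(X)\to 0\]
of based $D_r$-acyclic complexes — where no correction term appears because all homology vanishes — together with $\torsionag[D_r](\Sigma C_*(X))=-\torsionag[D_r](C_*(X))$, we obtain, in $\Dab[D_r]/\{\pm 1\}$,
\[\torsionag[D_r](C_*(Y))-\torsionag[D_r](C_*(X)) = j(\tau),\]
where $j\colon K_1(\ZZ G)\to\Dab[D_r]$ is induced by the rationalisation $\ZZ G\to D_r$ followed by the Dieudonn\'e determinant.

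Applying the polytope homomorphism $P$ and recalling that $\Pag{D_r}(-)=P(-\torsionag[D_r](-))$, we reduce to showing that the image of the composite $K_1(\ZZ G)\xrightarrow{j}\Dab[D_r]\xrightarrow{P}\Poly(H)$ lies in the subgroup $H\subset\Poly(H)$ of one-point polytopes; then $\Pag{D_r}(Y)$ and $\Pag{D_r}(X)$ agree in $\PolyT(H)=\Poly(H)/H$. Since $K_1(\ZZ G)$ is generated by classes of invertible matrices $A\in\GL_n(\ZZ G)$, and since under the rationalisation $\ZZ G\cong(\ZZ K)H\hookrightarrow DH$ (with $D_r=\Ore(DH)$) such a matrix maps to a matrix over $DH$ which is again invertible over $DH$ — its $\ZZ G$-inverse maps to an inverse — it suffices to prove that $P(\det\nolimits_{D_r}(B))$ is a single point of $H$ for every matrix $B$ over $DH$ that is invertible over $DH$.

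This follows from two facts about the twisted group ring $DH$. First, the Newton polytope of any unit $u\in(DH)^\times$ is a single point: from $P(u)+P(u^{-1})=P(uu^{-1})=P(1)=\{0\}$ in $\Poly(H)$ and the fact that the dimension of a Minkowski sum is at least the dimension of each summand, both $P(u)$ and $P(u^{-1})$ must be zero-dimensional, hence lie in $H$. Second, $K_1(DH)$ is generated by the subgroup $(DH)^\times$ of units: iterating the Bass--Heller--Swan decomposition along a basis of $H$, and using that the skew field $D$ is regular — so that all Nil-contributions vanish and the relevant $K_0$-groups are just $\ZZ$ — one finds that, apart from the image of $K_1(D)=\Dab$, the only new generators of $K_1(DH)$ are the classes of the chosen basis elements of $H$, which are units. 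Hence, for $B$ invertible over $DH$, the class $[B]\in K_1(DH)$ is a product of classes of units, so $\det\nolimits_{D_r}(B)$ is a product of classes of monomial units, and $P(\det\nolimits_{D_r}(B))$ is a corresponding $\ZZ$-linear combination of one-point polytopes $\{h\}$ with $h\in H$, i.e.\ again a single point of $H$. This completes the reduction, and hence the argument.

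The step I expect to be the main obstacle is the second fact above — that $K_1(DH)$ is generated by units — since this is precisely where one must exploit the structure of $DH$ as an iterated twisted Laurent extension over the regular ring $D$, and some care is needed to identify the new $K_1$-classes produced by the Bass--Heller--Swan splitting with the expected monomial units. Everything else — the mapping-cone computation and the additivity and suspension behaviour of agrarian torsion — is routine Reidemeister-torsion bookkeeping, available from \cref{theo:twisted:matrix_chain} and, in the commutative case, from~\cite[I.2]{Turaev2001}.
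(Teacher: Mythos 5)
Your argument is correct, and it reconstructs a proof that the paper itself does not give: \cref{prop:polytope:invariance} is simply imported from \cite{HK2019}*{Proposition~5.8}. Your skeleton agrees with the source's: the mapping-cone/additivity computation showing that a $G$-homotopy equivalence changes $\torsionag[D_r]$ by the image of its Whitehead torsion is exactly the content of the simple-homotopy-invariance statement \cite{HK2019}*{Lemma~4.9} plus naturality of torsion, and everything then reduces to showing that the composite $K_1(\ZZ G)\to \Dab[D_r]\xrightarrow{P}\Poly(H)$ lands in the subgroup $H$ of singletons. Where you genuinely diverge is in the proof of this last reduction. The cited sources deduce it from the statement that the Dieudonn\'e determinant of a matrix over $DH$ which is invertible over $DH$ has singleton Newton polytope, established there by a direct polytope-theoretic argument (multiplicativity of $P$ plus the fact that $P(\det_{D_r}A)$ and $P(\det_{D_r}A^{-1})$ are honest polytopes summing to $\{0\}$); you instead invoke the Bass--Heller--Swan decomposition to show that $K_1(DH)$ is generated by units and then use that units of $DH$ are monomials. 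This route is valid, but two points deserve care: first, you need the \emph{twisted} (Farrell--Hsiang) version of the decomposition, and the vanishing of the Nil-terms and the computation $K_0=\ZZ$ must hold at \emph{every} stage of the iteration, which requires regularity of each intermediate crossed product $D\ast\ZZ^k$ (Noetherian of finite global dimension), not merely of $D$; second, your ``fact 1'' that units of $DH$ have singleton polytopes rests on the multiplicativity of $P$ on $DH$, which is precisely \cite{Kielak2018}*{Lemma~3.12} as quoted in the paper, so there is no circularity. The remaining bookkeeping --- the based short exact sequence of the cone, additivity without correction term for $D_r$-acyclic complexes, the sign under suspension, and the harmless indeterminacy by $\{\pm g\mid g\in G\}$, whose image under $P$ already lies in $H$ --- is all in order.
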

As a consequence, the $D_r$-agrarian polytope $\Pag{D_r}(X)$ does not depend on the particular $G$-CW-structure of $X$.

\subsection{Thickness of Newton polytopes} The agrarian polytope is usually rather difficult to compute for a concrete group. Its thickness along a given line is often more accessible. With an approach similar to~\cite{FL2016}, we will see in \cref{sect:twisted} that it can be computed in terms of agrarian Betti numbers of a suitably restricted chain complex.
\begin{defi}
\label{thickness dfn}
Assume that $G$ is finitely generated and denote the free part of its abelianisation by $H$. Let $\phi\colon G\to \ZZ$ be a homomorphism factoring through $H$ as $\overbar\phi \colon H\to\ZZ$. Let $P\in\Poly(H)$ be a single polytope. The \emph{thickness} of $P$ along $\phi$ is given by
\[\thickness_\phi(P)\coloneqq \max\{\overbar\phi(x) - \overbar\phi(y)\mid x, y\in P\}\in \ZZ_{\geq 0}.\]
Since it respects the Minkowski sum and vanishes on polytopes consisting of a single point, the assignment $P\mapsto \thickness_\phi(P)$ extends to a group homomorphism $\thickness_\phi\colon\PolyT(H)\to\ZZ$.
\end{defi}

An equivalent way of thinking of a twisted group ring $DH$ constructed from an agrarian embedding $\ZZ G\hookrightarrow D$ in the case $H=\ZZ$ is as a twisted Laurent polynomial ring $D[t, t^{-1}]$. In order to see the correspondence, note that since $\ZZ$ is free with one generator, we can choose a section $s$ of the epimorphism $\phi\colon G\to \ZZ$ which is itself a homomorphism. By \cref{lemm:agrarian:ore_field}, the resulting twisted group ring will be independent of the choice of the (group-theoretic or not) section.
If we stipulate that $tdt^{-1}=s(1)ds(1)^{-1}$ for $d \in D$, then the ring $D[t, t^{-1}]_\phi$, with  $\phi$ added as an index to indicate the origin of the twisting, will be canonically isomorphic to $D\ZZ$.

For elements of the Laurent polynomial ring, the Newton polytope will be a line of length equal to the degree of the polynomial. Here, the \emph{degree} $\deg(x)$ of a non-trivial Laurent polynomial $x$ is the difference of the highest and lowest degree among its monomials. In particular, the degree of a single monomial is always 0 and the degree of a polynomial with non-vanishing constant term coincides with its degree as a Laurent polynomial.

Let now $G$ be a finitely generated agrarian group with agrarian embedding $\ZZ G\hookrightarrow D$ and denote by $K$ the kernel of the projection of $G$ onto the free part of its abelianisation, which we denote $H$. Further let $\phi\colon G\to\ZZ$ be an epimorphism with kernel $K_\phi$, and denote the induced map $H\to\ZZ$ by $\overbar\phi$ with kernel $K_{\overbar\phi}$. Recall that by \cref{lemm:agrarian:ore_comparison}, the iterated Ore field $\Ore(\Ore(DK_{\overbar\phi})\ZZ)$ can be identified with the Ore field $\Ore(DH)$ via the isomorphism $\beta$. We write $\Ore(DK_{\overbar\phi})\ZZ$ as a twisted Laurent polynomial ring $\Ore(DK_{\overbar\phi})[t,t^{-1}]_\phi$.
The idea behind the following lemma is now based on the fact that the Newton polytope of a multi-variable Laurent polynomial $x$ determines all the Newton `lines' of $x$ when viewed as a single-variable Laurent polynomial with more complicated coefficients.
\begin{lemm}
\label{lemm:polytope:one_variable}
In the situation above, for any $x\in\Ore(DK_{\overbar\phi})[t, t^{-1}]_\phi$ with $x\neq 0$, we have
\[\thickness_{\phi}(P(\beta(x)))=\deg(x).\]
\end{lemm}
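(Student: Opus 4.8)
The plan is to unwind both sides of the claimed equality in terms of the multi-variable Laurent polynomial picture and to show they measure the same geometric quantity. First I would write $x = \sum_{n\in\ZZ} c_n \ast t^n$ with $c_n\in\Ore(DK_{\overbar\phi})$, so that by definition $\deg(x) = \max\{n\mid c_n\neq 0\} - \min\{n\mid c_n\neq 0\}$. On the other side, $\beta(x)$ is an element of $\Ore(DH)$, but a priori it need not lie in $DH$ itself, so I would first reduce to the case where each coefficient $c_n$ lies in $DK_{\overbar\phi}$ rather than its Ore localisation. This is achieved by clearing denominators: using the Ore condition in $DK_{\overbar\phi}$ one finds a common left denominator $q\neq 0$ with $q c_n\in DK_{\overbar\phi}$ for all $n$, so that $qx\in DK_{\overbar\phi}[t,t^{-1}]_\phi$ has coefficients in $DK_{\overbar\phi}$ and the same degree as $x$ (left multiplication by $q$ is injective and does not change which coefficients vanish). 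Since $\beta$ restricted to $\Ore(DK_{\overbar\phi})$ maps into $\Ore(DK_{\overbar\phi})\subset\Ore(DH)$ and $P\colon \Dab[\Ore(DH)]\to\Poly(H)$ is a homomorphism, $P(\beta(qx)) = P(\beta(q)) + P(\beta(x))$, and $\beta(q)\in\Ore(DK_{\overbar\phi})$ has Newton polytope contained in the hyperplane $\ker\overbar\phi$, hence contributes $0$ to $\thickness_\phi$. So it suffices to prove the statement for $x$ with coefficients in $DK_{\overbar\phi}$.

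Next, with $c_n\in DK_{\overbar\phi}$, I would compute $\beta(x)$ explicitly. By the formula for $\beta$ in \cref{lemm:agrarian:ore_comparison}, writing $c_n = \sum_{k\in K_{\overbar\phi}} u_{k,n}\ast k$, we get $\beta(x) = \sum_{n,k} u_{k,n}\ast k\,t(n)\in DH$, where $t\colon\ZZ\to H$ is the chosen section. Now the key point is to track the $\overbar\phi$-value of the support elements: for the basis element $k\,t(n)$ of $DH$ we have $\overbar\phi(k\,t(n)) = \overbar\phi(k) + \overbar\phi(t(n)) = 0 + n = n$, since $k\in K_{\overbar\phi} = \ker\overbar\phi$ and $t$ is a section of $\overbar\phi$. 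Therefore the support of $\beta(x)$ in $H$ consists of elements whose $\overbar\phi$-values are precisely $\{n\mid c_n\neq 0\}$ (no cancellation between different $n$ can occur because the basis elements $k\,t(n)$ for distinct $n$ are $D$-linearly independent and project to distinct cosets). Consequently $\max\overbar\phi(\supp\beta(x)) - \min\overbar\phi(\supp\beta(x)) = \max\{n\mid c_n\neq 0\} - \min\{n\mid c_n\neq 0\} = \deg(x)$, which is exactly $\thickness_\phi(P(\beta(x)))$ by the definition of thickness applied to the single polytope $P(\beta(x)) = \conv(\supp\beta(x))$.

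The main obstacle I anticipate is the bookkeeping in the reduction to integral (non-localised) coefficients: one must be careful that clearing denominators on the left is compatible with the left-module structure underlying $\beta$ and that it genuinely preserves the degree, and one must also confirm that $\beta(q)$, while possibly a fraction $p_1 p_2^{-1}$ with $p_i\in DK_{\overbar\phi}$, still has Newton polytope in $\ker\overbar\phi$ — this follows because $P(p_1 p_2^{-1}) = P(p_1) - P(p_2)$ and both $P(p_i)$ lie in $\ker\overbar\phi$ as the support of any element of $DK_{\overbar\phi}$ is contained in $K_{\overbar\phi}\subset\ker\overbar\phi$, so $\thickness_\phi$ annihilates it. Everything else is a direct computation with the explicit formula for $\beta$ and the definitions of degree, support and thickness; no deep input beyond \cref{lemm:agrarian:ore_comparison} and the Ore condition is needed.
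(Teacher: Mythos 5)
Your proof is correct and follows essentially the same route as the paper: reduce to coefficients in $DK_{\overbar\phi}$ by clearing denominators via the Ore condition, then compute $\beta(x)$ explicitly, observe that the basis elements $k\,t(n)$ preclude cancellation, and read off the thickness from the $\overbar\phi$-values of the support. Your justification of the reduction step (via the homomorphism properties of $P$ and $\thickness_\phi$ and the fact that the denominator's polytope lies in $\ker\overbar\phi$) is, if anything, slightly more explicit than the paper's.
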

\begin{proof}
Since multiplying by a common denominator of all $\Ore(DK_{\overbar\phi})$-coefficients of $x$ does neither change its degree nor the support of its image under $\beta$, we can restrict to the case $x\in DK_{\overbar\phi}[t,t^{-1}]_\phi$.
Thus $x$ will be of the form $x=\sum_{n\in\ZZ} (\sum_{k\in K_{\overbar\phi}} u_{k, n}\ast k) t^n$ with $u_{k, n}\in D$. Denoting the group-theoretic section of $\overbar\phi$ used to construct the twisted Laurent polynomial ring by $s$, we obtain:
\[\beta(x)= \sum_{\substack{\mathllap{n}\in \mathrlap{\ZZ} \\ \mathllap{k}\in \mathrlap{K_{\overbar\phi}}}} u_{k,n} \ast k s(n).\]
The elements $ks(n)$ form a basis of the free $D$-module $DH$, and thus no cancellation can occur between the individual $u_{k,n}$. By the analogous argument for the twisted group ring $DK_{\overbar\phi}$, cancellation can also be ruled out for the sum $\sum_{k\in K_{\overbar\phi}} u_{k, n}\ast k$ for each $n\in \ZZ$. We conclude:
\begin{align*}
\thickness_{\phi}(P(\beta(x))) =&\max\{\overbar\phi(k_1s(n_1)) - \overbar\phi(k_2s(n_2))\mid k_1, k_2 \in K_{\overbar\phi}, n_1, n_2\in \ZZ, u_{k_i, n_i}\neq 0\}\\
=&\max\{n_1 - n_2\mid k_1, k_2 \in K_{\overbar\phi}, n_1, n_2\in \ZZ, u_{k_i, n_i}\neq 0 \}\\
=&\max\{n_1 - n_2\mid \exists k_i\in K_{\overbar\phi}\colon u_{k_i,n_i}\neq 0 \textrm{ for } i=1,2\}\\
=&\max\{n_1 - n_2\mid \sum_{k_i\in K_{\overbar\phi}} u_{k_i, n_i}\ast k_i\neq 0 \textrm{ for } i=1,2\}\\
=&\deg(x).\qedhere
\end{align*}
\end{proof}

\section{Twisted agrarian Euler characteristic}
\label{sect:twisted}

While the shape of the agrarian polytope introduced in the previous section is often hard to determine, there is a convenient equivalent description of its thickness along a given line. To this end, we will introduce the agrarian analogue of the twisted $L^2$-Euler characteristic introduced by Friedl and Lück in~\cite{FL2016}. We assume that $G$ is a finitely generated $D$-agrarian group with a fixed agrarian embedding $\alpha\colon \ZZ G\hookrightarrow D$. We use $H$ to denote the free part of the abelianisation of $G$, and let $K$ be the kernel of the canonical projection of $G$ onto $H$.

\subsection{Definition of the twisted agrarian Euler characteristic}

We now introduce twisted agrarian Euler characteristics, which arise as ordinary agrarian Euler characteristics of cellular $\ZZ G$-chain complexes twisted by an epimorphism from $G$ to the integers:
\begin{defi}
Let $X$ be a finite free $G$-CW-complex and let $\phi\colon G\to\ZZ$ be a homomorphism. We denote by $\phi^*\ZZ[t,t^{-1}]$ the $\ZZ G$-module obtained from the $\ZZ$-module $\ZZ[t,t^{-1}]$ by letting $G$ act as $g \cdot \sum_{n \in \ZZ} \lambda_n t^n= \sum_{n\in \ZZ} \lambda_n t^{n+\phi(g)}$, where $\lambda_n\in\ZZ$ for $n\in\ZZ$.
Consider the $\ZZ G$-chain complex $C_*(X)\otimes_{\ZZ} \phi^*\ZZ[t,t^{-1}]$ equipped with the diagonal $G$-action and set
\begin{align*}
\bag_p(X;\phi) &\coloneqq \bag_p(C_*(X)\otimes_{\ZZ} \phi^*\ZZ[t,t^{-1}])\in \NN \cup \{\infty\},\\
\hag(X;\phi) &\coloneqq \sum_{p\geq 0} \bag_p(X;\phi)\in\NN \cup \{\infty\},\\
\chiag(X;\phi) &\coloneqq \sum_{p\geq 0} (-1)^p \bag_p(X;\phi)\in \ZZ,\text{ if $\hag(X;\phi)<\infty$}.
\end{align*}
We say that $X$ is \emph{$\phi$-$D$-finite} if $\hag(X;\phi)<\infty$, and in this case $\chiag(X;\phi)$ is called the \emph{$\phi$-twisted $D$-agrarian Euler characteristic} of $X$. More generally, we will also consider the $\phi$-twisted agrarian Euler characteristic $\chiag(C_*;\phi)$ for any finite free $\ZZ G$-chain complex $C_*$, with $C_*$ taking the role of the cellular chain complex $C_*(X)$.
\end{defi}

The aim of this section is to prove that the thickness of the agrarian polytope in a prescribed direction can be computed as a twisted agrarian Euler characteristic.
Recall that $G$ is a finitely generated $D$-agrarian group with a fixed agrarian embedding $\alpha\colon\ZZ G\hookrightarrow D$ and that we denote by $\alpha_r\colon\ZZ G\hookrightarrow D_r$ the rationalisation of $\alpha$ as introduced in~\cref{defi:agrarian:rationalisation}.
\begin{theo}
\label{theo:twisted:thickness}
Let $X$ be a $D_r$-acyclic finite free $G$-CW-complex and $\phi\colon G\to\ZZ$ a homomorphism.
Then
\[\thickness_\phi(\Pag{D_r}(X))=-\chiag[D_r](X;\phi).\]
\end{theo}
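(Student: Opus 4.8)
The plan is to reduce the statement to a one-variable computation and then compute both sides of the identity through a non-degenerate $\tau$-chain. First I would replace $X$ by its cellular $\ZZ G$-chain complex $C_*=C_*(X)$, which is a $D_r$-acyclic finite free $\ZZ G$-chain complex with a preferred basis coming from the cells. By \cref{theo:twisted:matrix_chain} we may choose a non-degenerate $\tau$-chain $\gamma$, and then $\torsionag[D_r](C_*)=\sum_p (-1)^p \det_{D_r}(S_p(\gamma))$. Applying the polytope homomorphism and recalling the sign convention in \cref{defi:polytope:agrarian:chain_complex}, the left-hand side becomes $\thickness_\phi\big(\sum_p (-1)^{p+1} P(\det_{D_r}(S_p(\gamma)))\big)$, where I have used that $\thickness_\phi$ is a group homomorphism on $\PolyT(H)$. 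The goal is then to identify, term by term, $\thickness_\phi$ of the Newton polytope of $\det_{D_r}(S_p(\gamma))$ with an agrarian Betti number contribution on the right.

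**Passing to one variable.** The key tool is \cref{lemm:polytope:one_variable} together with the identification $\beta$ of \cref{lemm:agrarian:ore_comparison}: since $\phi$ factors through $H$ as $\overbar\phi$, we have $D_r=\Ore(DH)\cong\Ore(\Ore(DK_{\overbar\phi})\ZZ)$, and the latter is the twisted Laurent polynomial ring $\Ore(DK_{\overbar\phi})[t,t^{-1}]_\phi$. The matrices $S_p(\gamma)$ have entries in $\ZZ G\subset D_r$; I would view these entries as elements of $\Ore(DK_{\overbar\phi})[t,t^{-1}]_\phi$ via $\beta^{-1}$. Over this twisted Laurent polynomial ring, $\det_{D_r}(S_p)=\det_{\Ore(\Ore(DK_{\overbar\phi})\ZZ)}(S_p)$, and by \cref{lemm:polytope:one_variable} its thickness along $\phi$ equals the degree $\deg$ of this Dieudonné determinant as a one-variable Laurent polynomial over the skew field $\Ore(DK_{\overbar\phi})$. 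So the left-hand side becomes $\sum_p (-1)^{p+1}\deg\det_{\Ore(DK_{\overbar\phi})[t,t^{-1}]_\phi}(S_p(\gamma))$ — exactly the kind of alternating sum of degrees of Dieudonné determinants that appears in Friedl–Lück's treatment of the twisted $L^2$-Euler characteristic.

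**Computing the right-hand side.** For $-\chiag[D_r](X;\phi)$, note that $C_*(X)\otimes_\ZZ \phi^*\ZZ[t,t^{-1}]$, after tensoring up to $D_r$, computes the same homology as $D_r\otimes_{\ZZ G} C_*(X)$ viewed appropriately; more usefully, $\bag[D_r]_p(X;\phi)$ can be computed as the $\Ore(DK_{\overbar\phi})$-dimension drop in a chain complex of free $\Ore(DK_{\overbar\phi})[t,t^{-1}]_\phi$-modules obtained by base change from $C_*$. Here I would invoke the structure theory of finitely generated modules over the (noncommutative) principal ideal domain $\Ore(DK_{\overbar\phi})[t,t^{-1}]_\phi$: every such module splits as a free part plus a torsion part, and the $D_r$-Betti number $\bag[D_r]_p(X;\phi)$ is the rank of the free part of the $p$-th homology, while the alternating sum of torsion orders is governed by the torsion. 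A careful bookkeeping — using that the preferred basis and the $\tau$-chain $\gamma$ give a concrete chain contraction and hence explicit presentations of these homology modules — shows $-\chiag[D_r](X;\phi)=\sum_p (-1)^{p+1}\deg\det_{\Ore(DK_{\overbar\phi})[t,t^{-1}]_\phi}(S_p(\gamma))$, matching the left-hand side.

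**Main obstacle.** The routine parts are the sign bookkeeping and the invocation of \cref{lemm:polytope:one_variable}. The genuinely delicate step is the last paragraph: relating the alternating sum of degrees of Dieudonné determinants of the $\tau$-chain submatrices $S_p$ to the alternating sum of the free ranks $\bag[D_r]_p(X;\phi)$. This requires showing that passing from the (acyclic) complex over $D_r$ to the complex over the PID $\Ore(DK_{\overbar\phi})[t,t^{-1}]_\phi$ converts the torsion element $\torsionag[D_r](C_*)$ into an Euler-characteristic count of torsion module orders, which is precisely the $\phi$-twisted Betti number sum. The analogous fact in the $L^2$-setting is \cite[Theorem~0.1 and its proof]{FL2016}; I would adapt that argument, replacing von Neumann dimension by $\Ore(DK_{\overbar\phi})$-dimension and the Fuglede–Kadison determinant by the Dieudonné determinant, checking that no analytic input is actually needed and that the purely algebraic skeleton survives the substitution.
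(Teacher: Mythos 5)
Your overall architecture coincides with the paper's: pass to the cellular chain complex, pick a non-degenerate $\tau$-chain via \cref{theo:twisted:matrix_chain}, identify $D_r$ with $\Ore(\Ore(DK_{\overbar\phi})[t,t^{-1}]_\phi)$ via the isomorphism $\beta$ of \cref{lemm:agrarian:ore_comparison}, convert thickness into degree via \cref{lemm:polytope:one_variable}, and compare with the homology of the complex over the non-commutative PID $R\coloneqq\Ore(DK_{\overbar\phi})[t,t^{-1}]_\phi$. But the decisive step is where your sketch goes wrong. The modules $H_p(R\otimes C_*)$ have no free part: they are entirely $R$-torsion by \cref{lemm:twisted:laurent_computation}, precisely because $C_*$ becomes acyclic over $\Ore(R)\cong D_r$. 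So $\bag[D_r]_p(X;\phi)$ is not ``the rank of the free part of the $p$-th homology''; after the reductions of \cref{lemm:twisted:surjective,lemm:twisted:kernel,coro:twisted:laurent_computation} and a flatness argument, it is the $\Ore(DK_{\overbar\phi})$-dimension of the torsion module $H_p(R\otimes C_*)$. Moreover, $\det_{D_r}(S_p)$ lives in $\Dab[D_r]$, not in $R$, so \cref{lemm:polytope:one_variable} does not apply to it directly. The missing device, which simultaneously fixes both points, is to diagonalise each $S_p$ over the Euclidean domain $R$ (Jacobson normal form) by permutation and elementary matrices: these have determinants of thickness zero, the resulting diagonal entries $\lambda_{p,i}\in R$ compute $\thickness_\phi(P(\det_{D_r}(S_p)))=\sum_i\deg(\lambda_{p,i})$ by \cref{lemm:polytope:one_variable}, and the same operations applied to the full differential matrix exhibit $H_p(R\otimes C_*)\cong\bigoplus_i R/(\lambda_{p,i})$, whose $\Ore(DK_{\overbar\phi})$-dimension is again $\sum_i\deg(\lambda_{p,i})$. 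Without this, the term-by-term identification you call ``careful bookkeeping'' has no proof.

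Two further points. Your fallback of adapting the Friedl--Lück $L^2$-argument by substituting dimensions and determinants is exactly what the paper cautions against: that proof uses that the combinatorial Laplacian induces the $L^2$-Laplacian on $\mathcal{N}(G)\otimes C_*$, which has no analogue over a general skew field, and this is why the matrix-chain/diagonalisation route is taken instead. Finally, the statement allows an arbitrary homomorphism $\phi\colon G\to\ZZ$, including non-surjective and trivial ones, whereas your setup of $K_{\overbar\phi}$ and \cref{lemm:polytope:one_variable} requires $\phi$ to be an epimorphism; one must first reduce to that case using \cref{lemm:twisted:surjective}.
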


For universal $L^2$-torsion, the analogous statement has been proved by Friedl and Lück in~\cite[Remark~4.30]{FL2017}. Their proof is based on the fact that universal $L^2$-torsion is the universal abelian invariant of $L^2$-acyclic finite based free $\ZZ G$-chain complexes $C_*$ that is additive on short exact sequences and satisfies a certain normalisation condition. While large parts of the verification of this universal property are purely formal, in the proof of~\cite[Lemma~1.5]{FL2017} it is used that the combinatorial Laplace operator on $C_*$ induces the $L^2$-Laplace operator on $\mathcal{N}(G)\otimes C_*$, which has no analogue over a general skew field $D$. We instead establish \cref{theo:twisted:thickness} using the matrix chain approach to the computation of Reidemeister torsion explained in~\cite[I.2.1]{Turaev2001}.

\subsection{Reduction to ordinary Euler characteristics} Before we get to the proof, we will transfer some of the helpful lemmata in~\cite[Sections~2.2~\&~3.3]{FL2016} to the agrarian setting.

The following lemma allows us to restrict our attention to surjective twists $\phi\colon G\to\ZZ$ in the proof of \cref{theo:twisted:thickness}:
\begin{lemm}
\label{lemm:twisted:surjective}
Let $X$ be a finite free $G$-CW-complex and let $\phi\colon G\to\ZZ$ be a group homomorphism.
\begin{enumerate}[label=(\arabic*)]
    \item For any integer $k\geq 1$ we have that $X$ is $(k\cdot\phi)$-$D$-finite if and only if $X$ is $\phi$-$D$-finite, and if this is the case we get
    \[\chiag(X;k\cdot\phi)=k\cdot\chiag(X;\phi).\]
    \item Denote the trivial homomorphism $G\to\ZZ$ by $c_0$. The complex $X$ is $c_0$-$D$-finite if and only if $X$ is $D$-acylic, and if this is the case we get
    \[\chiag(X;c_0)=0.\]
\end{enumerate}
\end{lemm}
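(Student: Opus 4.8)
The plan is to unpack both statements purely in terms of the definition of the twisted agrarian Betti numbers $\bag_p(X;\phi) = \dim_{D} H_p(D \otimes_{\ZZ G} (C_*(X) \otimes_{\ZZ} \phi^*\ZZ[t,t^{-1}]))$ and to reduce them to ordinary agrarian Betti numbers of $C_*(X)$ by identifying the relevant induced module explicitly.

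For part~(1), fix $k \geq 1$. The key observation is that $(k\cdot\phi)^*\ZZ[t,t^{-1}]$, as a $\ZZ G$-module, is isomorphic to the direct sum $\bigoplus_{i=0}^{k-1} \phi^*\ZZ[t,t^{-1}]$: decompose $\ZZ[t,t^{-1}]$ as a $\ZZ$-module according to the residue of the exponent modulo $k$, and note that on the summand of exponents congruent to $i$ the action of $g$ via $k\cdot\phi$ shifts by $k\phi(g)$, matching the $\phi$-twisted action after the identification $t^{km+i} \leftrightarrow t^m$ on the $i$-th copy. Tensoring with the flat (free) $\ZZ$-module $C_*(X)$ and then inducing up along $\ZZ G \hookrightarrow D$ preserves this direct sum decomposition, so $H_p(D \otimes (C_*(X) \otimes (k\phi)^*\ZZ[t,t^{-1}])) \cong \bigoplus_{i=0}^{k-1} H_p(D \otimes (C_*(X)\otimes \phi^*\ZZ[t,t^{-1}]))$. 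Taking $D$-dimensions gives $\bag_p(X;k\phi) = k\cdot\bag_p(X;\phi)$ for every $p$; summing the finiteness condition $\hag$ and the alternating sum $\chiag$ then yields both the equivalence of $(k\phi)$-$D$-finiteness with $\phi$-$D$-finiteness and the claimed scaling $\chiag(X;k\phi) = k\cdot\chiag(X;\phi)$.

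For part~(2), when $\phi = c_0$ is trivial the $G$-action on $\ZZ[t,t^{-1}]$ is trivial, so $c_0^*\ZZ[t,t^{-1}] \cong \bigoplus_{n\in\ZZ} \ZZ$ as a $\ZZ G$-module. Hence $C_*(X) \otimes_{\ZZ} c_0^*\ZZ[t,t^{-1}] \cong \bigoplus_{n\in\ZZ} C_*(X)$ as $\ZZ G$-chain complexes, and after inducing to $D$ we get $\bag_p(X;c_0) = \sum_{n\in\ZZ} \bag_p(C_*(X)) = \aleph_0 \cdot \bag_p(C_*(X))$, which is finite (and then zero) exactly when $\bag_p(C_*(X)) = 0$. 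Thus $\hag(X;c_0) < \infty$ iff $X$ is $D$-acyclic, and in that case every $\bag_p(X;c_0) = 0$, so $\chiag(X;c_0) = 0$.

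The only mildly delicate point is making the module identifications $\ZZ G$-equivariant rather than merely $\ZZ$-linear — in particular keeping track of how $g$ permutes the summands in part~(1) — but once the reindexing $t^{km+i}\leftrightarrow t^m$ is set up this is routine; no step presents a genuine obstacle. One should also note in passing that the direct sums here are finite in part~(1) (so commuting them past $D\otimes_{\ZZ G}-$ and past homology is immediate) while in part~(2) the sum is countably infinite, which is harmless since homology and the functor $D\otimes_{\ZZ G}-$ both commute with arbitrary direct sums.
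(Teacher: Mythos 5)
Your proposal is correct and follows exactly the paper's own argument: the paper proves (1) via the direct sum decomposition $(k\cdot\phi)^*\ZZ[t,t^{-1}]\cong\bigoplus_{i=1}^{k}\phi^*\ZZ[t,t^{-1}]$ and (2) via $C_*(X)\otimes_{\ZZ}c_0^*\ZZ[t,t^{-1}]\cong\bigoplus_{\ZZ}C_*(X)$, both combined with additivity of agrarian Betti numbers. You have merely spelled out the residue-mod-$k$ reindexing that the paper leaves implicit.
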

\begin{proof}
\begin{enumerate}[label=(\arabic*)]
\item This follows from the direct sum decomposition $(k\cdot \phi)^* \ZZ[t,t^{-1}]\cong \bigoplus_{i=1}^k \phi^*\ZZ[t,t^{-1}]$ and additivity of Betti numbers.
\item This is a direct consequence of $C_*(X)\otimes_{\ZZ} c_0^*\ZZ[t,t^{-1}]\cong \bigoplus_{\ZZ} C_*(X)$ and additivity of Betti numbers.\qedhere
\end{enumerate}
\end{proof}

We will now see that twisted $D$-agrarian Euler characteristics over $G$ can equivalently be viewed as ordinary $D$-agrarian Euler characteristics over the kernel of the twist homomorphism.
\begin{lemm}
\label{lemm:twisted:kernel}
Let $X$ be a finite free $G$-CW-complex and let $\phi\colon G\to\ZZ$ be an epimorphism. Denote the kernel of $\phi$ by $K_\phi$. Then $X$ is $\phi$-$D$-finite if and only if $\sum_{p\geq 0} \bag_p(\res_G^{K_\phi}X)<\infty$, and in this case we have
\[\chiag(X;\phi)=\chiag(\res_G^{K_\phi}X).\]
\end{lemm}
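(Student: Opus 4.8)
The plan is to identify the twisted chain complex $C_*(X)\otimes_{\ZZ}\phi^*\ZZ[t,t^{-1}]$ over $\ZZ G$ with the cellular chain complex of $\res^{K_\phi}_G X$ over $\ZZ K_\phi$, after tensoring up to $D$, and then read off equality of all agrarian Betti numbers degree by degree. First I would choose a group-theoretic section $s\colon\ZZ\to G$ of $\phi$ (possible since $\ZZ$ is free), giving a set-theoretic decomposition $G\cong K_\phi\times\ZZ$ and exhibiting $\ZZ G$ as the twisted group ring $(\ZZ K_\phi)[t,t^{-1}]_\phi$ as in the discussion before \cref{lemm:polytope:one_variable}. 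Under this identification, $\phi^*\ZZ[t,t^{-1}]$ is the module on which $K_\phi$ acts trivially and $s(1)$ acts by multiplication by $t$, so there is a standard isomorphism of $\ZZ G$-modules (equivalently $(\ZZ K_\phi)[t,t^{-1}]_\phi$-modules)
\[
\ZZ G\otimes_{\ZZ K_\phi}(\res^{K_\phi}_G M)\;\xrightarrow{\ \cong\ }\;M\otimes_{\ZZ}\phi^*\ZZ[t,t^{-1}]
\]
natural in the $\ZZ G$-module $M$; applying it to $M=C_p(X)$ for each $p$ and using that restriction commutes with the cellular chain functor yields an isomorphism of $\ZZ G$-chain complexes $C_*(X)\otimes_{\ZZ}\phi^*\ZZ[t,t^{-1}]\cong \ZZ G\otimes_{\ZZ K_\phi} C_*(\res^{K_\phi}_G X)$.

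Next I would apply $D\otimes_{\ZZ G}(-)$. Because $D$ is a $\ZZ G$-$\ZZ G$-bimodule and $\ZZ G$ is free as a right $\ZZ K_\phi$-module, there is a canonical isomorphism of left $D$-modules (in fact of $D$-chain complexes)
\[
D\otimes_{\ZZ G}\bigl(\ZZ G\otimes_{\ZZ K_\phi}C_*(\res^{K_\phi}_G X)\bigr)\;\cong\; D\otimes_{\ZZ K_\phi}C_*(\res^{K_\phi}_G X),
\]
where on the right $D$ is regarded as a right $\ZZ K_\phi$-module via the restriction of $\alpha$ — which is exactly an agrarian embedding for $K_\phi$ into the same skew field $D$. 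Taking homology and $D$-dimensions gives $\bag_p(X;\phi)=\dim_D H_p(D\otimes_{\ZZ K_\phi}C_*(\res^{K_\phi}_G X))=\bag_p(\res^{K_\phi}_G X)$ for every $p$, where the agrarian Betti numbers on the right are those associated to the agrarian embedding $\alpha|_{\ZZ K_\phi}$. This immediately yields the stated equivalence of finiteness conditions ($\hag(X;\phi)<\infty$ iff $\sum_p\bag_p(\res^{K_\phi}_G X)<\infty$) and, summing the alternating contributions, the equality $\chiag(X;\phi)=\chiag(\res^{K_\phi}_G X)$.

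The only genuinely delicate point is the first isomorphism: one must check that the map sending $g\otimes m$ to $m'\otimes t^{\phi(g)}$, where $m'=s(\phi(g))^{-1}g\cdot m\in\res^{K_\phi}_G M$ (equivalently, where one writes $g=k\,s(n)$ with $k\in K_\phi$, $n\in\ZZ$ and sends $g\otimes m$ to $(k\cdot m)\otimes t^n$), is well defined, $\ZZ G$-linear and bijective, and that it is natural enough to commute with the boundary maps of $C_*(X)$. This is the twisted-group-ring incarnation of the elementary fact that $\ZZ G$ is free of rank one over $\ZZ K_\phi$ with basis $\{s(n)\}_{n\in\ZZ}$ permuted by left multiplication, so it is a routine but slightly fiddly verification; the grading shift coming from Turaev's conventions plays no role here. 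Everything else — exactness of $D\otimes_{\ZZ G}(-)$ on the relevant free complexes, additivity of $\dim_D$ — is formal and already implicit in the properties of agrarian Betti numbers recalled in \cref{sect:twisted}. I would also remark, following \cref{lemm:twisted:surjective}, that the surjectivity hypothesis on $\phi$ is exactly what makes $K_\phi$ fit into the twisted group ring picture with $\ZZ$ as quotient, so no generality is lost.
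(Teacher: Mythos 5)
Your proposal follows the paper's proof essentially verbatim: the same isomorphism of $\ZZ G$-chain complexes $\ZZ G\otimes_{\ZZ K_\phi}\res_G^{K_\phi}C_*(X)\cong C_*(X)\otimes_{\ZZ}\phi^*\ZZ[t,t^{-1}]$, followed by base change to $D$ and comparison of $D$-dimensions degree by degree. One small correction to the step you flag as the delicate one: the correct isomorphism is simply $g\otimes m\mapsto gm\otimes t^{\phi(g)}$ (with inverse $y\otimes t^q\mapsto g\otimes g^{-1}y$ for any $g\in\phi^{-1}(q)$); no section $s$ is needed. Your candidate formulas involving $s(\phi(g))^{-1}$ do not quite work --- the version $g\otimes m\mapsto \bigl(s(\phi(g))^{-1}g\cdot m\bigr)\otimes t^{\phi(g)}$ is balanced over $\ZZ K_\phi$ but fails to be $G$-equivariant for the diagonal action on the target, while the version $g\otimes m\mapsto (k\cdot m)\otimes t^{n}$ with $g=ks(n)$ is not well defined on the balanced tensor product, since $gk's(n)^{-1}m\neq gs(n)^{-1}k'm$ in general. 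With the unadorned formula the verification you describe goes through, and the rest of your argument is exactly the paper's.
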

\begin{proof}
The proof is based on the following isomorphism of $\ZZ G$-chain complexes:
\begin{align*}
\ZZ G\otimes_{\ZZ K_\phi} \res_G^{K_\phi}C_*(X)&\xrightarrow{\cong}C_*(X)\otimes_{\ZZ}\phi^* \ZZ[t,t^{-1}]\\
g\otimes x&\xrightarrow{\phantom{\cong}} gx\otimes t^{\phi(g)},
\end{align*}
the inverse of which is given by $y\otimes t^q\mapsto g\otimes g^{-1}y$ for any choice of $g\in \phi^{-1}(q)$.
Using the isomorphism, we obtain for every $p\geq 0$:
\begin{align*}
H_p(D\otimes C_*(X)\otimes_{\ZZ} \phi^*\ZZ[t,t^{-1}]) &\cong H_p(D\otimes \ZZ G\otimes_{\ZZ K_\phi} \res_G^{K_\phi} C_*(X))\\
&=H_p(D\otimes_{\ZZ K_\phi} \res_G^{K_\phi} C_*(X)).
\end{align*}
We conclude that $\bag_p(X;\phi)=\bag_p(\res_G^{K_\phi} X)$ by applying $\dim_D$, which yields the claim after taking the alternating sum over $p\geq 0$.
\end{proof}

\begin{rema}
\label{rema:twisted:typef}
Let $G$ be a $D$-agrarian group of type $\mathtt{F}$. Let $\phi\colon G\to\ZZ$ be an epimorphism with kernel $K_\phi$. If $K_\phi$ is also of type $\mathtt{F}$, then by \cref{lemm:twisted:kernel} and \cite[Theorem~3.9~(2)]{HK2019}
\[\chiag(EG;\phi)=\chiag(\res_G^{K_\phi} EG)=\chiag(EK_\phi)=\chi(K_\phi).\]
In particular, in this case the value of $\chiag(EG;\phi)$ does not depend on the choice of agrarian embedding.
\end{rema}

\begin{lemm}
\label{lemm:twisted:laurent_computation}
Let $C_*$ be a $D$-acyclic $\ZZ G$-chain complex of finite type. Let $\phi\colon G\to\ZZ$ be an epimorphism with kernel $K_\phi$. Consider the embedding $\ZZ G\cong (\ZZ K_\phi)\ZZ \hookrightarrow D\ZZ=D[t,t^{-1}]_\phi$ constructed in \cref{lemm:agrarian:ore_field} for $K\coloneqq K_\phi$, where we use that $G/K\cong\ZZ$ via $\phi$. Then
\[\bag_n(\res_G^{K_\phi} C_*)= \dim_D H_n(D[t,t^{-1}]_\phi\otimes C_*)<\infty.\]
In particular, the $D[t, t^{-1}]_\phi$-modules $H_n(D[t,t^{-1}]_\phi\otimes C_*)$ are torsion.
\end{lemm}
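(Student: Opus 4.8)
The plan is to compare the homology of $D[t,t^{-1}]_\phi \otimes C_*$ with that of $D \otimes_{\ZZ K_\phi} \res^{K_\phi}_G C_*$ by exhibiting $D[t,t^{-1}]_\phi \otimes C_*$ as an induced chain complex. Concretely, I would first observe that, under the identification $\ZZ G \cong (\ZZ K_\phi)\ZZ$ coming from a choice of section of $\phi$, the twisted Laurent polynomial ring $D[t,t^{-1}]_\phi = D\ZZ$ is a free right $\ZZ G$-module, and more precisely $D[t,t^{-1}]_\phi \cong D \otimes_{\ZZ K_\phi} \ZZ G$ as $(D, \ZZ G)$-bimodules, with $t^n$ corresponding to the image of $s(n)$. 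Tensoring with $C_*$ then gives an isomorphism of $D$-chain complexes
\[
D[t,t^{-1}]_\phi \otimes_{\ZZ G} C_* \;\cong\; D \otimes_{\ZZ K_\phi} \res^{K_\phi}_G C_*,
\]
entirely parallel to the isomorphism already used in the proof of \cref{lemm:twisted:kernel}. Taking homology and applying $\dim_D$ yields $\dim_D H_n(D[t,t^{-1}]_\phi \otimes C_*) = \bag_n(\res^{K_\phi}_G C_*)$.

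Next I would establish finiteness. Since $C_*$ is a finite type $\ZZ G$-chain complex and is $D$-acyclic, \cref{lemm:twisted:kernel} (applied to a finite free $G$-CW model, or directly to $C_*$ via the $\chiag$-version of that lemma) combined with the hypothesis $\hag(C_*;\phi) < \infty$ — which is what $D$-acyclicity of $C_*$ buys us through the identity $C_*(X) \otimes_\ZZ \phi^*\ZZ[t,t^{-1}] \cong \ZZ G \otimes_{\ZZ K_\phi} \res^{K_\phi}_G C_*$ — shows $\sum_n \bag_n(\res^{K_\phi}_G C_*) < \infty$. Hence each $\bag_n(\res^{K_\phi}_G C_*)$ is finite, which is the asserted bound $\dim_D H_n(D[t,t^{-1}]_\phi \otimes C_*) < \infty$.

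Finally, for the ``in particular'' clause: $D[t,t^{-1}]_\phi$ is a (twisted) Laurent polynomial ring over the skew field $D$, hence a principal ideal domain, and its Ore field of fractions is $\Ore(D[t,t^{-1}]_\phi)$. A finitely generated module $M$ over such a ring is torsion precisely when $\Ore(D[t,t^{-1}]_\phi) \otimes_{D[t,t^{-1}]_\phi} M = 0$, equivalently when $\dim_D M < \infty$ (a free summand would be infinite-dimensional over $D$, while a torsion cyclic module $D[t,t^{-1}]_\phi/(p)$ is finite-dimensional over $D$ of dimension $\deg p$). Since $H_n(D[t,t^{-1}]_\phi \otimes C_*)$ is finitely generated over the Noetherian ring $D[t,t^{-1}]_\phi$ (as $C_*$ is of finite type) and we have just shown it is finite-dimensional over $D$, it is torsion.

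The main obstacle I anticipate is purely bookkeeping: making the bimodule isomorphism $D[t,t^{-1}]_\phi \cong D \otimes_{\ZZ K_\phi} \ZZ G$ precise with respect to the chosen section and the twisting, and checking it is $\ZZ G$-linear on the nose (this is the same ``tedious but direct'' verification that appears in \cref{lemm:agrarian:ore_comparison} and in the proof of \cref{lemm:twisted:kernel}). Everything downstream — passing to homology, taking $D$-dimensions, and deducing the torsion statement — is then formal.
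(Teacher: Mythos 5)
Your reduction of $\dim_D H_n(D[t,t^{-1}]_\phi\otimes C_*)$ to $\bag_n(\res_G^{K_\phi}C_*)$ via the bimodule isomorphism $D[t,t^{-1}]_\phi\cong D\otimes_{\ZZ K_\phi}\ZZ G$ is fine, as is your deduction of the torsion claim \emph{from} finiteness. The gap is the finiteness step, and it is a circularity rather than an omission: you justify $\sum_n\bag_n(\res_G^{K_\phi}C_*)<\infty$ by appealing to ``the hypothesis $\hag(C_*;\phi)<\infty$, which is what $D$-acyclicity of $C_*$ buys us through the identity $C_*\otimes_{\ZZ}\phi^*\ZZ[t,t^{-1}]\cong\ZZ G\otimes_{\ZZ K_\phi}\res_G^{K_\phi}C_*$''. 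That identity (the content of \cref{lemm:twisted:kernel}) only translates $\hag(C_*;\phi)$ into $\sum_n\bag_n(\res_G^{K_\phi}C_*)$; it says nothing about finiteness, since $D$-acyclicity concerns the complex $D\otimes_{\ZZ G}C_*$, not the twisted one. In the paper, the implication ``$D$-acyclic $\Rightarrow$ $\phi$-$D$-finite'' is exactly \cref{coro:twisted:laurent_computation}, which is \emph{deduced from} the present lemma, so you are assuming what is to be proved. Keep in mind that $\res_G^{K_\phi}C_*$ is an infinitely generated $\ZZ K_\phi$-complex, so there is no a priori bound on its Betti numbers; this finiteness is the whole content of the lemma.

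The paper's proof is a citation of the Friedl--L\"uck argument, which runs in the opposite order to yours: one first shows that the finitely generated modules $H_n(D[t,t^{-1}]_\phi\otimes C_*)$ over the non-commutative PID $D[t,t^{-1}]_\phi$ are torsion, and then finiteness of $\dim_D$ follows from the structure theorem (the equivalence you correctly state at the end). To get the torsion statement one must relate the embedding $\alpha\colon\ZZ G\to D$ to the ring $D[t,t^{-1}]_\phi$. One way to do this: for a suitable section $s$, the assignment $d\ast n\mapsto d\cdot\alpha(s(n))$ is a ring homomorphism $D[t,t^{-1}]_\phi\to D$ whose restriction to $\ZZ G\cong(\ZZ K_\phi)\ZZ$ is $\alpha$, so that $D\otimes_{\ZZ G}C_*\cong D\otimes_{D[t,t^{-1}]_\phi}\bigl(D[t,t^{-1}]_\phi\otimes C_*\bigr)$; since $D[t,t^{-1}]_\phi$ is Noetherian and hereditary, the universal coefficient sequence together with $D$-acyclicity forces the free summand of each $H_n(D[t,t^{-1}]_\phi\otimes C_*)$ to vanish. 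Some argument of this kind is indispensable, and without it the proposal does not prove the lemma.
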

\begin{proof}
The proof is analogous to that of~\cite[Theorem~3.6~(4)]{FL2016} with $D$ taking the role of $\linnelld(K)$. The assumption that $C_*$ be projective is in fact not used in the proof of the theorem and hence is not part of the statement of \cref{lemm:twisted:laurent_computation}.
\end{proof}

\begin{coro}
\label{coro:twisted:laurent_computation}
Let $X$ be a $D$-acyclic finite free $G$-CW-complex. Let $\phi\colon G\to\ZZ$ be an epimorphism with kernel $K_\phi$. Then $X$ is $\phi$-$D$-finite and
\[\chiag(X, \phi)=\sum_{p\geq 0} (-1)^p \dim_D H_p(D[t, t^{-1}]_\phi\otimes C_*(X)).\]
\end{coro}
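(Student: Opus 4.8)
The plan is to obtain this corollary as a purely formal consequence of \cref{lemm:twisted:kernel} and \cref{lemm:twisted:laurent_computation}, applied to the cellular chain complex $C_*(X)$. Since $X$ is a finite free $G$-CW-complex, $C_*(X)$ is a finite free $\ZZ G$-chain complex; in particular it is of finite type, and it is concentrated in finitely many non-negative degrees. The hypothesis that $X$ be $D$-acyclic says precisely that $C_*(X)$ is a $D$-acyclic $\ZZ G$-chain complex. Thus $C_* = C_*(X)$ meets the hypotheses of \cref{lemm:twisted:laurent_computation}, where we take $K \coloneqq K_\phi$ and use $G/K_\phi \cong \ZZ$ via $\phi$ to form the embedding $\ZZ G \cong (\ZZ K_\phi)\ZZ \hookrightarrow D\ZZ = D[t,t^{-1}]_\phi$.

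First I would apply \cref{lemm:twisted:laurent_computation} to conclude that for every $p \geq 0$ we have
\[\bag_p(\res_G^{K_\phi} C_*(X)) = \dim_D H_p(D[t,t^{-1}]_\phi \otimes C_*(X)) < \infty.\]
As $C_*(X)$ vanishes outside finitely many degrees, only finitely many summands $\bag_p(\res_G^{K_\phi} X)$ are non-zero, so $\sum_{p \geq 0} \bag_p(\res_G^{K_\phi} X) < \infty$. This is exactly the finiteness condition in \cref{lemm:twisted:kernel}, so that lemma applies: $X$ is $\phi$-$D$-finite and $\chiag(X;\phi) = \chiag(\res_G^{K_\phi} X)$. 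Finally, unwinding the definition $\chiag(\res_G^{K_\phi} X) = \sum_{p \geq 0} (-1)^p \bag_p(\res_G^{K_\phi} X)$ and substituting the displayed identity yields the asserted formula.

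I do not expect a genuine obstacle here: the corollary only reassembles the two preceding lemmas, and the finiteness of $X$ is what turns the degreewise finiteness coming from \cref{lemm:twisted:laurent_computation} into the global finiteness hypothesis of \cref{lemm:twisted:kernel}. The one point that deserves a moment's attention is bookkeeping of the embedding: the twisted Laurent ring $D[t,t^{-1}]_\phi$ appearing in the statement of the corollary must be read as the one produced by \cref{lemm:agrarian:ore_field} for $K = K_\phi$, which is the same embedding used in \cref{lemm:twisted:laurent_computation}; with this reading the two expressions for $\bag_p(\res_G^{K_\phi} C_*(X))$ refer to the same $D$-module and the chain of equalities closes up.
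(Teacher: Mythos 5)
Your proposal is correct and matches the paper's argument, which is literally ``Apply \cref{lemm:twisted:kernel,lemm:twisted:laurent_computation}''; you have simply spelled out the routine bookkeeping (finiteness of the complex turning degreewise finiteness into the global hypothesis, and the identification of the two embeddings) that the paper leaves implicit.
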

\begin{proof}
Apply \cref{lemm:twisted:kernel,lemm:twisted:laurent_computation}.
\end{proof}

\subsection{Thickness of the agrarian polytope}
We are now able to proceed with the proof of \cref{theo:twisted:thickness}:
\begin{proof}[Proof of \cref{theo:twisted:thickness}]
We will actually prove the more general statement that for every $D_r$-acyclic finite based free $\ZZ G$-chain complex $C_*$ concentrated in degrees 0 through $m$
\begin{equation}
    \label{eq:twisted:to_prove}
    \thickness_{\phi}(P(-\torsionag[D_r](C_*))=-\chiag[D_r](C_*;\phi).
\end{equation}
Since $\thickness_{\phi}$ and $P$ are homomorphisms, we can drop the signs from both sides. Using \cref{lemm:twisted:surjective}, we can further assume that $\phi$ is an epimorphism.

By \cref{theo:twisted:matrix_chain}, we find a non-degenerate $\tau$-chain $\gamma$ such that
\[\thickness_{\phi}\Big(P\big(\torsionag[D_r](C_*)\big)\Big)=\thickness_{\phi}\Big(P\Big(\sum_{p=0}^m (-1)^{p} \det\nolimits_{D_r}\big(S_p(\gamma)\big)\Big)\Big).\]
Crucially,
\[\Ore(\Ore(DK_{\overbar\phi})[t,t^{-1}]_\phi)\cong \Ore(DH)=D_r\]
via the isomorphism $\beta$ constructed in \cref{lemm:agrarian:ore_comparison}, where $K_{\overbar\phi}$ is the kernel of the epimorphism $\overbar\phi\colon H\to \ZZ$ induced by $\phi$.
The subring
\[\Ore(DK_{\overbar\phi})[t,t^{-1}]_\phi\]
of the left-hand side, which contains $\beta^{-1}(\ZZ G)$ and thus all entries of  $S_p=S_p(\gamma)$, is a (non-commutative) Euclidean domain. This means that we can diagonalise the matrices $S_p$ by multiplying them from the left and right with permutation matrices and elementary matrices over this twisted Laurent polynomial ring. This diagonalisation procedure occurs as part of an algorithm that brings a matrix into Jacobson normal form, which is a non-commutative analogue of the better-known Smith normal form for matrices over commutative PIDs. For details, we refer to the proof of \cite[Theorem~3.10]{Jacobson1943}. Recall that a \emph{permutation matrix} is a matrix obtained from an identity matrix by permuting rows and columns. An \emph{elementary matrix} over a ring $R$ is a matrix differing from the identity matrix in a single off-diagonal entry. The determinant of either type of matrix is $1$ or $-1$, and thus the thickness in direction of $\phi$ of their polytopes vanish. Hence, $\thickness_{\phi}(P(\det(S_p)))=\thickness_{\phi}(P(\det(T_p)))$ for the diagonal matrix $T_p$ obtained from $S_p$ in this way. We denote the diagonal entries of $T_p$ by $\lambda_{p, i}\in\Ore(DK_{\overbar\phi})[t, t^{-1}]_\phi$ for $i=1,\dots, |\gamma_p|$ and note that all the entries $\lambda_{p, i}$ are non-zero since all matrices $S_p$ become invertible over $D_r$. Using that both $\thickness_{\phi}$ and $P$ are homomorphisms, and applying \cref{lemm:polytope:one_variable} once more, we compute:
\begin{align*}
\thickness_{\phi}(P(\torsionag[D_r](C_*)) &= \thickness_{\phi}\Big(P\Big(\sum_{p=1}^m (-1)^p \det\nolimits_{D_r}\big(S_p(\gamma)\big)\Big)\Big)\\
&= \sum_{p=0}^{m-1} (-1)^{p} \sum_{i=1}^{|\gamma_p|}\thickness_{\phi}(P(\beta(\lambda_{p, i})))\\
&= \sum_{p=0}^{m-1} (-1)^{p} \sum_{i=1}^{|\gamma_p|} \deg(\lambda_{p, i}).
\end{align*}

We will now consider the right-hand side of \eqref{eq:twisted:to_prove}.
For this, we use that the agrarian embedding $\ZZ K_\phi\hookrightarrow D_r=\Ore(\Ore(DK_{\overbar\phi})\ZZ)$ factors through the agrarian embedding $\ZZ K_\phi\hookrightarrow\Ore(DK_{\overbar\phi})$, and thus the embedding $\ZZ G\cong(\ZZ K_\phi)\ZZ\hookrightarrow D_r[t, t^{-1}]_\phi$ introduced in \cref{lemm:twisted:laurent_computation} factors through $\ZZ G\cong (\ZZ K_\phi)\ZZ\hookrightarrow\Ore(DK_{\overbar\phi})[t, t^{-1}]_\phi$.
Since $D_r$ is flat over the skew field $\Ore(DK_{\overbar\phi})$, we conclude from \cref{coro:twisted:laurent_computation} that
\begin{align*}
\chiag[D_r](C_*; \phi)&=\sum_{p=0}^m (-1)^p \dim_{D_r} H_p(D_r[t, t^{-1}]_\phi\otimes C_*)\\
&=\sum_{p=0}^m (-1)^p \dim_{\Ore(DK_{\overbar\phi})} H_p(\Ore(DK_{\overbar\phi})[t, t^{-1}]_\phi\otimes C_*).
\end{align*}
Since $C_*$ is $D_r$-acyclic, we have $H_m(D_r\otimes C_*)=0$. But $C_{m+1}$ is trivial, which means that the differential $c_m$ must be injective. In particular, the summand corresponding to $p=m$ vanishes.

In order to establish \eqref{eq:twisted:to_prove}, we are now left to prove that
\begin{equation}
\label{eq:twisted:claim}
\sum_{i=1}^{|\gamma_p|} \deg(\lambda_{p, i})=\dim_{\Ore(DK_{\overbar\phi})} H_p(\Ore(DK_{\overbar\phi})[t,t^{-1}]_\phi\otimes C_*)
\end{equation}
holds for $p=0,\ldots,m-1$.
In order to not overload notation, we abbreviate $\Ore(DK_{\overbar\phi})[t,t^{-1}]_\phi$ as $R$.
Recall that the homology modules $H_p(R\otimes C_*)$ consist solely of $R$-torsion elements by \cref{lemm:twisted:laurent_computation}.
Furthermore, since $R\otimes C_{p-1}$ is a free $R$-module, any $R$-torsion maps into it trivially. We are thus able to express the homology modules as torsion submodules of a cokernel in the following way:
\begin{align*}
H_p(R\otimes C_*) &= \ker(\id_R\otimes c_p)/\im(\id_R\otimes c_{p+1})\\
&\cong \ker\left(\id_R\otimes c_p\colon (R\otimes C_p) / \im(\id_R\otimes c_{p+1}) \to R\otimes C_{p-1}\right)\\
&=\tors_R((R\otimes C_p)/\im(\id_R\otimes c_{p+1}))\\
&=\tors_R(\coker(\id_R\otimes c_{p+1})).
\end{align*}
Instead of performing elementary operations on the matrix $S_p$ to obtain the diagonal matrix $T_p$, we can instead apply them to the entire matrix $A_p$ representing $\id_R\otimes c_{p+1}$. This procedure will not change the isomorphism type of the cokernel of the map given by right multiplication with this matrix. Applying further elementary operations over $R$, we can achieve that all the entries not contained in $S_p$ consist only of zeros with the submatrix $S_p$ now being of the form $T_p$. This is possible since $S_p$ has the same rank as $A_p$ over the field of fractions of $\Ore(R)$ by the same rank counting argument used to prove~\cite[I.2.2]{Turaev2001}.
Hence
\[H_p(R\otimes C_*)\cong\tors_R(\coker(\id_R\otimes c_{p+1}))\cong\oplus_{i=1}^{|\gamma_{p}|} R/(\lambda_{p,i}),\]
which yields \eqref{eq:twisted:claim} after applying $\dim_{\Ore(DK_{\overbar\phi})}$.
\end{proof}

\section{The Bieri--Neumann--Strebel invariants and HNN extensions} In order to discuss some application of the theory of agrarian invariants, we need to first cover the BNS invariants and the HNN extensions.

\begin{defi}
Let $G$ be a group generated by a finite subset $S$, and let $X$ denote the Cayley graph of $G$ with respect to $S$. Recall that the vertex set of $X$ coincides with $G$. We define the \emph{Bieri--Neumann--Strebel} (or \emph{BNS}) \emph{invariant} $\Sigma^1(G)$ to be the subset of $H^1(G;\RR) \smallsetminus \{0\}$ consisting of the non-trivial homomorphisms (the \emph{characters}) $\phi \colon G \to \RR$ for which the full subgraph of $X$ spanned by $\phi^{-1}([0,\infty))\subseteq G$ is connected.
\end{defi}

The BNS invariants were introduced by Bieri, Neumann and Strebel in~\cite{Bierietal1987} via a different, but equivalent definition. It is an easy exercise to see that $\Sigma^1(G)$ is independent of the choice of the finite generating set $S$.

We now aim to give an interpretation of lying in the BNS invariant for integral characters $\phi \colon G \to \ZZ$. To do so, we need to introduce the notion of HNN extensions.

\begin{defi}
\label{defi:bns:bns}
Let $A$ be a group and let $\alpha\colon B\xrightarrow{\cong} C$ be an isomorphism between two subgroups of $A$. Choose a presentation $\langle S\mid R\rangle$ of $A$ and let $t$ be a new symbol not in $S$. Then the group $A*_\alpha$ defined by the presentation
\[\langle S, t\mid R, tbt^{-1}=\alpha(b)\ \forall b\in B\rangle\]
is called the \emph{HNN extension of $A$ relative to $\alpha\colon B \xrightarrow{\cong}C$}. We call $A$ the \emph{base group} and $B$ the \emph{associated group} of the HNN extension.

The HNN extension is called \emph{ascending} if $B = A$.

The homomorphism $\phi \colon A*_\alpha \to \ZZ$ given by $\phi(t) = 1$ and $\phi(s) = 0$ for every $s \in S$ is the \emph{induced character}.
\end{defi}

\begin{prop}[{\cite[Proposition 4.3]{Bierietal1987}}]
\label{prop:bns:hnn}
Let $G$ be a finitely generated group, and let $\phi \colon G \to \ZZ$ be a non-trivial character. We have $\phi \in \Sigma^1(G)$ if and only if $G$ is isomorphic to an ascending HNN extension with finitely generated base group and induced character $\phi$.
\end{prop}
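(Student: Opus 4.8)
The plan is to prove both implications by directly manipulating the defining condition of $\Sigma^1(G)$ in terms of connectedness of the positive half of the Cayley graph. Throughout, fix a finite generating set $S$ of $G$, let $X$ be the corresponding Cayley graph, and let $X_{\geq 0}$ denote the full subgraph spanned by $\phi^{-1}([0,\infty))$. Write $K=\ker\phi$ (which need not be finitely generated) and fix $t\in G$ with $\phi(t)=1$; then $G=\langle K, t\rangle$ and every element of $G$ is of the form $t^n k$ with $k\in K$.

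For the implication $\phi\in\Sigma^1(G)\Rightarrow G$ is an ascending HNN extension with f.g.\ base, the key step is to produce a finitely generated subgroup $A\leq K$ with $tAt^{-1}\leq A$ and $\langle A,t\rangle = G$. First I would use connectedness of $X_{\geq 0}$ to write each generator $s\in S$ as an edge-path in $X_{\geq 0}$; reading off the vertices of these paths and closing up, one obtains a finite subset $F\subseteq K$ such that $A_0 := \langle F\rangle$ is finitely generated and $G=\langle A_0, t\rangle$, essentially because any loop in $X$ based at $1$ can be pushed into the $\phi\geq 0$ region. Next, conjugating by $t$ does not obviously land back in $A_0$, so I would instead set $A := \bigcup_{n\geq 0} t^{-n} A_0 t^n$; this is an ascending union, $tAt^{-1}\subseteq A$ by construction, and $\langle A, t\rangle = G$. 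The remaining — and genuinely delicate — point is that $A$ is itself finitely generated: this uses the \emph{full strength} of connectedness of $X_{\geq 0}$, namely that there is a uniform bound (independent of $n$) on the length of a path in $X_{\geq 0}$ needed to connect $1$ to a given generator conjugated by $t^{-n}$, because translating such a path by $t^{-n}$ keeps it inside $X_{\geq 0}$. Making this bound explicit shows $t^{-n}A_0 t^n\subseteq A_0$ for all $n$, hence $A=A_0$ is finitely generated, and then $G\cong A_0 *_{\alpha}$ with $\alpha$ the restriction of conjugation by $t$, and induced character $\phi$.

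For the converse, suppose $G=A*_{\alpha}$ is an ascending HNN extension with $A$ finitely generated, $B=A$, and induced character $\phi$. Take for $S$ the union of a finite generating set of $A$ together with the stable letter $t$; then $\phi(t)=1$ and $\phi$ vanishes on $A$. I would show directly that $X_{\geq 0}$ is connected: every vertex of $X$ is of the form $g = t^n a$ with $a\in A$ and $n=\phi(g)$, and when $n\geq 0$ one can walk from $g$ down to $t^n$ inside $A\cdot t^n\subseteq X_{\geq 0}$ (the generators of $A$ preserve $\phi$-value), then from $t^n$ to $t^{n-1}$ via the relation $t a t^{-1} = \alpha(a)$ — more precisely $t^n = t^{n-1}\cdot t$, and the edge labelled $t$ from $t^{n-1}$ to $t^n$ stays in $\phi\geq 0$ — and iterate down to $t^0 = 1$. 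Thus every vertex in $\phi^{-1}([0,\infty))$ is connected to $1$ within $X_{\geq 0}$, so $X_{\geq 0}$ is connected and $\phi\in\Sigma^1(G)$.

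The main obstacle is the finite generation of the base group $A$ in the forward direction: the naive candidate closed under $t$-conjugation is an infinite ascending union, and one must exploit that connectedness of $X_{\geq 0}$ is a \emph{global} property (uniform path-length bounds survive translation by powers of $t$) to collapse this union to a single finitely generated group. The converse direction is essentially a bookkeeping argument with normal forms in HNN extensions and presents no real difficulty; since the cited source is \cite[Proposition~4.3]{Bierietal1987}, I would in any case only sketch these steps and refer there for the full combinatorial details.
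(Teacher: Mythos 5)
The paper offers no proof of this statement --- it is imported verbatim from \cite{Bierietal1987} --- so your sketch has to stand on its own, and it does not. The direction you dismiss as ``bookkeeping'' is where the content lies, and your argument for it is wrong. In a strictly ascending HNN extension $G=A*_\alpha$ the kernel of $\phi$ is the increasing union $\bigcup_{n\geq 0}t^{-n}At^{n}$, which properly contains $A$ whenever $\alpha$ is not surjective; so your normal form claim that every $g\in G$ equals $t^{n}a$ with $a\in A$ and $n=\phi(g)$ is false. The paths you describe only connect the vertices of $\bigcup_{n\geq 0}t^{n}A$ to the identity and miss most of $\phi^{-1}([0,\infty))$. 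Concretely, in $G=\langle a,t\mid tat^{-1}=a^{2}\rangle$ the element $t^{-1}at$ lies in $\ker\phi$ but not in $A=\langle a\rangle$, and your walk never reaches it. Chasing this example also exposes an orientation issue your sketch never confronts: identifying $G$ with $\ZZ[1/2]\rtimes\ZZ$, a right-multiplication edge by $a^{\pm1}$ at a vertex of height $m\geq 0$ changes the $\ZZ[1/2]$-coordinate by $\pm 2^{m}\in\ZZ$, so with edges $g\to gs$ the component of $1$ in $\phi^{-1}([0,\infty))$ consists only of elements with integral coordinate and it is $-\phi$, not $\phi$, that lies in $\Sigma^1(G)$. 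A correct proof must fix the edge convention, write $g=t^{-m}at^{m+n}$, and verify that the resulting path stays in the correct half-space.

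The forward direction has the mirror-image defect. Your candidate $A=\bigcup_{n\geq 0}t^{-n}A_0t^{n}$ satisfies $t^{-1}At\subseteq A$, not $tAt^{-1}\subseteq A$ ``by construction'' (conjugating the union by $t$ adjoins the new term $tA_0t^{-1}$), so it is ascending for the stable letter $t^{-1}$ and hence for the character $-\phi$. More seriously, the claimed collapse $t^{-n}A_0t^{n}\subseteq A_0$ for all $n$ cannot hold in general: together with the one-sided inclusion you want for the HNN structure it would make $A_0$ normal in $G=\langle A_0,t\rangle$, forcing $\ker\phi=A_0$ to be finitely generated --- false for $\langle a,t\mid tat^{-1}=a^{2}\rangle$, which does satisfy the hypothesis for one of $\pm\phi$ while $\ker\phi\cong\ZZ[1/2]$. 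The actual argument of \cite{Bierietal1987} produces a finitely generated $A_0\leq\ker\phi$ satisfying only the one-sided inclusion $tA_0t^{-1}\subseteq A_0$, with $\bigcup_{n}t^{-n}A_0t^{n}=\ker\phi$ permitted to remain infinitely generated; it is the base, not the kernel, that connectivity makes finitely generated. As written, both halves of your proof would prove a false statement, so the gaps are not cosmetic.
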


\begin{defi}
\label{marked def}
Suppose that $G$ is finitely generated.
Let $P$ be a single polytope in the $\RR$-vector space $H_1(G;\RR)$, and let $F$ be a face of $P$. A \emph{dual} of $F$ is a connected component of the subspace
\[
\{ \phi \in H^1(G;\RR)\setminus\{0\} \mid F_\phi(P) = F \}.
\]

A \emph{marked polytope} is a pair $(P,m)$, where $P$ is a single polytope in $H_1(G;\RR)$, and $m$ is a \emph{marking}, that is a function $m \colon H^1(G;\RR) \to \{0,1\}$, which is constant on duals of faces of $F$, and such that $m^{-1}(1)$ is open.

The pair $(P,m)$ is a \emph{polytope with marked vertices} if $m^{-1}(1)$ is a union of some duals of vertices of $P$.

The marking $m$ will usually be implicit, and the characters $\phi$ with $m(\phi) = 1$ will be called \emph{marked}.
\end{defi}

In~\cite{FT2015}, Friedl--Tillmann use a different notion of a marking of a polytope, which corresponds to a polytope with marked vertices in our terminology where the marking $m$ is additionally required to be constant on all duals of a given vertex. Thus, our notion is more general, and the two notions differ when the polytope in question is a singleton in a $1$-dimensional ambient space: with our definition of marking, such a polytope admits four distinct markings (just as every compact interval of non-zero length does), whereas with the Friedl--Tillmann definition such a polytope admits only two markings in which either every character is marked or none is.

\section{Application to two-generator one-relator groups}
\begin{defi}
    A \emph{$(2,1)$-presentation} is a group presentation of the form ${\langle x, y \mid r\rangle}$, i.e., with two generators and a single relator.
    A group that admits a $(2, 1)$-presentation is called a \emph{two-generator one-relator group}.
\end{defi}

The story of the usefulness of agrarian invariants for two-generator one-relator groups begins with the following result of Lewin--Lewin.
\begin{theo}[{\cite[Theorem~1]{LL1978}}]
\label{theo:tgor:agrarian}
Torsion-free one-relator groups are agrarian.
\end{theo}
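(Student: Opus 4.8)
The plan is to realise the group ring $\ZZ G$ of a torsion-free one-relator group $G = \langle x_1, \dots, x_n \mid r \rangle$ as a subring of a skew field by exhibiting it as an \emph{Ore domain} inside a suitable field of fractions, after first reducing to a normal-form situation via the Magnus--Karrass--Solitar machinery. The starting point is the classical Magnus embedding and the Freiheitssatz: up to Tietze moves one may assume the relator $r$ is cyclically reduced, and if $r$ is not a proper power then $G$ is torsion-free automatically, while if $r = s^k$ with $s$ not a proper power then $G$ has torsion unless $k = 1$. Thus the torsion-free hypothesis is precisely the statement that $r$ is not a proper power, which is exactly the case handled by the Magnus--Karrass--Solitar induction on the length of $r$.

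First I would set up the induction on word length. If $r$ involves only one generator, then since $r$ is not a proper power we have (after a change of basis) $r = x_1$, so $G$ is free on $x_2, \dots, x_n$, and free groups are agrarian because $\ZZ F$ embeds into the Malcev--Neumann series ring $\QQ((F))$ (using a bi-invariant order on $F$), which is a skew field; alternatively one may use the iterated Ore localisation coming from the lower central series, but the Malcev--Neumann construction is cleanest. For the inductive step, if every generator occurs in $r$ with exponent sum zero under some primitive character $\phi \colon G \to \ZZ$, then $G$ is an HNN extension $G = A *_\alpha$ with finitely generated base group $A$ a one-relator group (indeed $A$ is a direct limit of one-relator groups with shorter relators, and by finite generation of the relevant subgroup one lands inside such a group), $\alpha$ an isomorphism between two finitely generated free subgroups of $A$, and $A$ torsion-free with shorter relator. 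If no generator has exponent-sum zero, one passes to a suitable one-relator subgroup of finite index or introduces a new generator to reduce to that case, as in the standard Magnus--Karrass--Solitar argument.

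The heart of the matter is then a localisation theorem for HNN extensions: given an embedding $\ZZ A \hookrightarrow D_A$ with $D_A$ a skew field, together with the isomorphism $\alpha$ identifying two subgroups $B \cong C$ of $A$ whose group rings are \emph{Ore} (which holds here because $B$, $C$ are finitely generated free and free group rings are Ore domains by the Malcev--Neumann argument), one can build a skew field $D$ containing $\ZZ G = \ZZ(A *_\alpha)$. Concretely, $\ZZ G$ is a ``twisted Laurent polynomial ring'' $\ZZ A[t^{\pm 1}]$ in which conjugation by $t$ implements $\alpha$ on the subring generated by $B$; one localises $\ZZ A$ at the multiplicative set of non-zero elements to get $D_A$, checks that the $t$-conjugation extends to the relevant localisation (this uses that $B$, $C$ sit nicely, i.e. the Ore condition for $\ZZ B$, $\ZZ C$ and the fact that the localisation $D_B \hookrightarrow D_A$ is a skew subfield), and then one recognises $D_A[t^{\pm 1}]$ with this twisting as an Ore domain — because $D_A$ is a skew field and $t$-conjugation is injective, the twisted polynomial ring $D_A[t]$ is a left (and right) Ore domain, hence so is the Laurent extension, hence it embeds in a skew field $D$. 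Finally one must verify that the composite $\ZZ G = \ZZ A[t^{\pm 1}] \hookrightarrow D_A[t^{\pm 1}] \hookrightarrow D$ is injective, which follows from injectivity of $\ZZ A \hookrightarrow D_A$ together with the normal-form theorem for HNN extensions guaranteeing that distinct reduced words over $\ZZ G$ stay distinct in $D_A[t^{\pm 1}]$.

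The main obstacle — and the technically delicate point that occupied Lewin--Lewin — is precisely the extension of the twisting automorphism across the Ore localisation: the conjugation by $t$ on $\ZZ A$ does \emph{not} directly extend to $D_A = \Ore(\ZZ A)$, because $t$-conjugation sends $\ZZ B$ isomorphically to $\ZZ C$ but these have different Ore localisations inside $D_A$. The resolution is to work not with the full conjugation automorphism of $D_A$ but with the partial map $D_B \to D_C$ induced by $\alpha$ on the sub-skew-fields $D_B = \Ore(\ZZ B) \subset D_A$ and $D_C = \Ore(\ZZ C) \subset D_A$, and to build $D$ as the field of fractions of an HNN-type construction over skew fields (a ``Hughes-free'' or ``crossed product'' style localisation), where one must check a non-degeneracy/flatness condition ensuring no collapse. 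Once this localisation-compatibility is in hand, the verification that the resulting ring is Ore and that $\ZZ G$ injects is comparatively routine, and the induction closes.
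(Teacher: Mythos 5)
The paper does not actually prove this statement---it is quoted verbatim as \cite[Theorem~1]{LL1978}, so the only thing to compare your sketch against is the genuine Lewin--Lewin argument. Measured against that, your proposal has a gap that is not a technicality but sinks the whole architecture: the group ring of a non-abelian free group is \emph{not} an Ore domain. This can be seen directly from the machinery used elsewhere in this paper: by the fundamental formula for Fox derivatives, the augmentation ideal of $\ZZ F_2$ is free as a right $\ZZ F_2$-module on $x-1$ and $y-1$, so $(x-1)\ZZ F_2\cap(y-1)\ZZ F_2=0$ and the right Ore condition fails. (Embeddability into a skew field via Malcev--Neumann series is a much weaker property than the Ore condition and does not imply it.) Since the associated Magnus subgroups $B\cong C$ in the hierarchy are free of rank typically at least $2$, the objects $\Ore(\ZZ B)$, $\Ore(\ZZ C)$ and $D_A=\Ore(\ZZ A)$ on which your localisation step is built do not exist. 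A second, independent error: $\ZZ(A*_\alpha)$ is a twisted Laurent polynomial ring over $\ZZ A$ only when $B=C=A$, i.e.\ for HNN extensions induced by an automorphism of the base. For proper associated subgroups, Britton's lemma gives a basis of reduced words $a_0t^{\varepsilon_1}a_1\cdots t^{\varepsilon_n}a_n$ that do not collapse to Laurent monomials, so the step ``a twisted Laurent ring over a skew field is Ore, hence embeds in a skew field'' has no valid input even granting the localisations.

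Your final paragraph correctly identifies that extending $\alpha$ across some completion of $\ZZ A$ is the crux, but what you defer there as ``comparatively routine'' is in fact the entire content of the theorem, and Ore theory cannot supply it. Lewin--Lewin's actual proof replaces Ore localisation throughout by Cohn's theory of free ideal rings: $\ZZ F$ is a fir and hence has a \emph{universal} field of fractions; the induction carries along the hypothesis that the division closure of each Magnus subgroup's group ring inside the skew field constructed for the base group is that universal field of fractions; and the HNN step is handled by Cohn's coproduct theorems for skew fields amalgamated over a common skew subfield (together with the skew Laurent extension only at the very top, where the twisting really is by an automorphism of the already-constructed skew field). If you want to salvage your outline, you must swap every occurrence of ``Ore localisation of a free group ring'' for ``universal localisation of a fir'' and prove the compatibility statement for Magnus subgroups; that is where all the work lies.
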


In the following, for a group presentation $\pi$, we will denote the groups it presents by $G_\pi$.

In order to describe the cellular chain complex of the universal coverings of classifying spaces for two-generator one-relator groups, we will use Fox derivatives, which were originally defined in~\cite{Fox1953}. Let $F$ be a free group on generators $x_i, i\in I$. The \emph{Fox derivative with respect to $x_i$} is then defined to be the unique $\ZZ$-linear map $\frac{\partial}{\partial x_i}\colon \ZZ F\to \ZZ F$ satisfying the conditions
\[\frac{\partial 1}{\partial x_i} = 0, \frac{\partial x_i}{\partial x_j} = \delta_{ij} \text{ and } \frac{\partial uv}{\partial x_i} = \frac{\partial u}{\partial x_i} + u\frac{\partial v}{\partial x_i}\]
for all $u, w\in F$, where $\delta_{ij}$ denotes the Kronecker delta.
The fundamental formula for Fox derivatives~\cite[(2.3)]{Fox1953} states that for every $u\in \ZZ F$ we have
\[u-1=\sum_{i\in I} \frac{\partial u}{\partial x_i}\cdot (x_i-1).\]
In the particular case of a two-generator one-relator group $G=\langle x, y\mid r\rangle$, the fundamental formula applied to $r$ implies that the following identity holds in $\ZZ G$, since there $r-1=0$:
\begin{equation}
    \label{eq:tgor:fundamental}
    \frac{\partial r}{\partial x} \cdot (x-1) = - \frac{\partial r}{\partial y}\cdot (y-1).
\end{equation}

We will need the following non-triviality result for Fox derivatives in two-generator one-relator groups:

\begin{lemm}
    \label{lemm:tgor:nontriviality}
    Let $\pi=\langle x, y\mid r\rangle$ be a $(2, 1)$-presentation with cyclically reduced relator $r$, and take $z$ to denote either $x$ or $y$.
    Denote the number of times $z$ or $z^{-1}$ appears in the word $r$ by $s$.
    Then the Fox derivative $\partial r/\partial z\in \ZZ F_2$ is a sum of the form $\sum_{j=1}^{s} \pm w_j$ for words $w_j$ representing mutually distinct elements $g_j\in G_\pi$.
    In particular, $\partial r/\partial z\neq 0$ in $\ZZ G_\pi$ if $s>0$.
\end{lemm}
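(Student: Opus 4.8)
The plan is to analyze the Fox derivative $\partial r/\partial z$ at the level of the free group $F = F_2$ on $x, y$, where it is computed by an explicit telescoping sum, and then to control which of the resulting group elements can coincide after passing to $G_\pi$. Write $r = z_1^{\epsilon_1} z_2^{\epsilon_2} \cdots z_n^{\epsilon_n}$ as a reduced word, with each $z_\ell \in \{x, y\}$ and $\epsilon_\ell \in \{\pm 1\}$. The product rule for Fox derivatives gives the standard formula
\[
\frac{\partial r}{\partial z} = \sum_{\ell \,:\, z_\ell = z} \epsilon_\ell \cdot u_\ell,
\]
where $u_\ell = z_1^{\epsilon_1}\cdots z_{\ell-1}^{\epsilon_{\ell-1}}$ if $\epsilon_\ell = +1$ and $u_\ell = z_1^{\epsilon_1}\cdots z_{\ell}^{\epsilon_{\ell}}$ (i.e.\ the prefix up to and including $z_\ell^{-1}$) if $\epsilon_\ell = -1$; here I use $\partial z/\partial z = 1$ and $\partial z^{-1}/\partial z = -z^{-1}$. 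Each $u_\ell$ is a prefix of the cyclic word $r$ (of length either $\ell-1$ or $\ell$), so there are exactly $s$ summands, one for each occurrence of $z^{\pm 1}$, with signs $\pm 1$ as claimed. First I would record this formula carefully and observe that the $w_j$ in the statement are precisely these prefixes $u_\ell$.

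The remaining task is to show the elements $g_j = [u_\ell] \in G_\pi$ are mutually distinct; this is the main obstacle, since distinctness of the $u_\ell$ in $F$ is automatic but could a priori fail in the quotient. The key tool is that $G_\pi = \langle x, y \mid r \rangle$ is a one-relator group with cyclically reduced relator $r$, so by the Freiheitssatz (Magnus) every proper subword of $r$ — in particular every proper prefix — generates a free subgroup on the letters it actually involves, and more to the point, two proper prefixes $p$ and $p'$ of $r$ represent the same element of $G_\pi$ only if $p^{-1}p'$ is trivial in $G_\pi$; since $p^{-1}p'$ is (freely reduced to) a proper subword of the cyclic word $r$ (using that $r$ is cyclically reduced), Magnus's theorem forces $p^{-1}p' = 1$ already in $F$, i.e.\ $p = p'$. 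I would phrase this as: the natural map from the set of prefixes of the cyclic word $r$ (of length $< n$) into $G_\pi$ is injective. The prefixes $u_\ell$ arising above all have length $\leq n$; one must check the length-$n$ case (which occurs only if $z_n = z$ and $\epsilon_n = -1$, giving $u_n = r$, which is trivial in $G_\pi$) does not collide with the others — but if $z_n^{-1}$ is the final letter then, $r$ being cyclically reduced, $z_1^{\epsilon_1} \neq z_n$, so this subtlety can be handled by a cyclic rotation argument, or simply by noting that the occurrence $\ell = n$ with $\epsilon_n = -1$ contributes $u_n$ a full word, which after the cyclic reduction hypothesis still yields a prefix distinct from the others modulo the relator.

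Putting these together: the $s$ elements $g_j \in G_\pi$ are pairwise distinct, so $\partial r/\partial z = \sum_{j=1}^s \pm w_j$ is a sum of $\pm$ distinct group-ring basis elements of $\ZZ G_\pi$, hence is nonzero whenever $s > 0$. The one point requiring care — and the step I expect to be the crux — is the application of the Freiheitssatz to rule out prefix collisions in $G_\pi$; everything else (the Fox-derivative computation, the bookkeeping of signs and prefix lengths) is routine. An alternative, more self-contained route for the distinctness step would be to pass to the one-relator HNN or amalgam decomposition associated with the generator $z$ in $r$ and track the $z$-letter prefixes there, but invoking Magnus's Freiheitssatz directly is cleaner and suffices.
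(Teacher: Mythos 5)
Your argument is correct in substance, but it takes a different (more self-contained) route than the paper: the paper simply cites \cite[Corollary~3.4]{FT2015} and remarks that the proof there already yields distinctness of the group elements themselves, whereas you reconstruct the underlying argument from scratch. Your reconstruction is sound: the prefix formula for $\partial r/\partial z$ gives exactly $s$ signed summands whose lengths are pairwise distinct (two occurrences could only produce prefixes of equal length if $r$ contained $z^{-1}z$, contradicting reducedness), the quotient $p^{-1}p'$ of two distinct prefixes freely reduces to a non-empty subword of $r$, and this subword is a \emph{proper} subword except in the single case where $p$ is the empty prefix and $p'=r$, which cyclic reducedness rules out exactly as you observe (this is the same delicate point the paper flags in its remark about $n_s<l$). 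The one thing you should fix is the attribution: the statement that a proper non-empty subword of a cyclically reduced relator is non-trivial in the one-relator group is \emph{not} the Freiheitssatz (which asserts that a subset of the generators omitting one occurring in $r$ generates a free subgroup, and says nothing directly about subwords of $r$); what you need is Weinbaum's theorem, i.e.\ \cite[Proposition~II.5.29]{LS2001}, which is the very result this paper invokes later for the same purpose. Once that citation is corrected, the proof stands; the advantage of your version is that it makes the lemma independent of \cite{FT2015}, at the cost of explicitly carrying the prefix bookkeeping that the paper outsources.
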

\begin{proof}
    This follows from \cite[Corollary~3.4]{FT2015}.
    While the statement of the corollary only asserts the distinctness of the group elements $g_j$ together with their scalar factors of $\pm 1$, the proof actually shows that the elements themselves are distinct.
    Also note that, in the proof of the corollary, $n_s$ is actually always strictly smaller than $l$, which is crucial for the correctness of the penultimate sentence.
\end{proof}

We are now able to show that the agrarian torsion of torsion-free two-generator one-relator groups is defined and can be calculated explicitly:
\begin{lemm}
    \label{lemm:tgor:torsion}
    Let $\pi=\langle x,y\mid r\rangle$ be a $(2, 1)$-presentation with $r$ cyclically reduced. Denote the universal covering of the presentation $2$-complex of $G_\pi$ associated to this presentation by $EG_\pi$. Then $EG_\pi$ is contractible and $D$-acyclic with respect to any agrarian embedding $\ZZ G_\pi\hookrightarrow D$. If $x$ or $x^{-1}$ appears as a letter in $r$, then
    \[\torsionag(EG_\pi)= -\left[\frac{\partial r}{\partial x}\right] + [y-1] \in \Dab,\]
    where $[-]\colon D^\times\to \Dab$ is the canonical quotient map.
    If $y$ or $y^{-1}$ appears in $r$, then the analogous statement holds with the roles of $x$ and $y$ interchanged.
\end{lemm}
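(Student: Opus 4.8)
The plan is to write down the cellular chain complex of $EG_\pi$ explicitly via Fox calculus and then evaluate \cref{theo:twisted:matrix_chain} on a carefully chosen non-degenerate $\tau$-chain.

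First I would record that, with the preferred basis coming from the single $0$-cell, the two $1$-cells $e_x,e_y$ and the single $2$-cell $\sigma$, the complex $C_*(EG_\pi)$ is
\[
0 \longrightarrow \ZZ G_\pi \xrightarrow{\ c_2\ } (\ZZ G_\pi)^2 \xrightarrow{\ c_1\ } \ZZ G_\pi \longrightarrow 0,
\]
where $c_1$ sends $e_x\mapsto x-1$, $e_y\mapsto y-1$ and $c_2$ sends $\sigma\mapsto \tfrac{\partial r}{\partial x}\,e_x+\tfrac{\partial r}{\partial y}\,e_y$; the identity $c_1c_2=0$ is exactly \eqref{eq:tgor:fundamental}. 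Contractibility of $EG_\pi$ is asphericity of the presentation $2$-complex: since $G_\pi$ admits an agrarian embedding it is torsion-free, hence $r$ is not a proper power, and the Lyndon asphericity theorem applies. For $D$-acyclicity I would compute the agrarian Betti numbers of the displayed complex directly. Since $G_\pi\neq 1$ is generated by $x$ and $y$, at least one of $x-1$, $y-1$ is a non-zero, hence invertible, element of $D$, so $\id_D\otimes c_1$ is surjective and $\bag_0=0$. By \cref{lemm:tgor:nontriviality}, the hypothesis that $x$ or $x^{-1}$ occurs in $r$ gives $\partial r/\partial x\neq 0$ in $\ZZ G_\pi$, so the first coordinate of $\id_D\otimes c_2$ is already injective and $\bag_2=0$. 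As the Euler characteristic of $C_*(EG_\pi)$ is $1-2+1=0$, this forces $\bag_1=0$, so $C_*(EG_\pi)$ is $D$-acyclic and its agrarian torsion is defined.

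To compute the torsion I would feed \cref{theo:twisted:matrix_chain} the $\tau$-chain $\gamma=(\emptyset,\{e_y\},\{\sigma\})$, i.e.\ $\gamma_1$ consisting of the index of the $y$-edge and $\gamma_2$ of the unique index of the $2$-cell. With Turaev's indexing conventions recalled before \cref{theo:twisted:matrix_chain} ($A_p$ represents $c_{p+1}$, and $S_p$ is cut out by the rows in $\gamma_{p+1}$ and the columns not in $\gamma_p$), the matrix $S_0$ picks out the single column of the $c_1$-matrix in the $e_y$-row, so $S_0=[\,y-1\,]$, and $S_1$ picks out the $\sigma$-row of the $c_2$-matrix in the $e_x$-column, so $S_1=[\,\partial r/\partial x\,]$. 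Now $\det\nolimits_D S_1=[\partial r/\partial x]\neq 0$ by \cref{lemm:tgor:nontriviality}, and $\det\nolimits_D S_0=[y-1]\neq 0$ because, $x$ occurring in $r$, the Freiheitssatz forces $y$ to have infinite order; hence $\gamma$ is non-degenerate and
\[
\torsionag(EG_\pi)=\det\nolimits_D(S_0)-\det\nolimits_D(S_1)=-\Big[\tfrac{\partial r}{\partial x}\Big]+[y-1],
\]
with the usual sign (and $\pm g$) indeterminacy. The statement with $x$ and $y$ interchanged follows by the symmetric argument, using $\gamma_1=\{e_x\}$ and invoking the Freiheitssatz with the roles of the generators swapped.

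The computation itself is routine; the point to be careful about is the bookkeeping in \cref{theo:twisted:matrix_chain}, namely that landing on $S_0=[y-1]$, $S_1=[\partial r/\partial x]$ (rather than the pair with $x$ and $y$ exchanged) requires putting the $y$-index, not the $x$-index, into $\gamma_1$ — a mildly counterintuitive point given the $x$ in the conclusion. The only genuine extra input beyond this and \cref{lemm:tgor:nontriviality} is the appeal to the Freiheitssatz, which is what guarantees $y-1\neq 0$, so that the chosen $\tau$-chain is non-degenerate and the formula is well posed.
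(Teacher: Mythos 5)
Your proof is correct and follows essentially the same route as the paper's: write the complex via Fox calculus, verify acyclicity by a rank count over the skew field, and evaluate \cref{theo:twisted:matrix_chain} on the $\tau$-chain with $S_0=[y-1]$, $S_1=[\partial r/\partial x]$. The only (valid) cosmetic difference is that you invoke the Freiheitssatz to get $y-1\neq 0$, where the paper instead disposes separately of the case that $r$ is a single letter and otherwise observes that both generators are non-trivial.
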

\begin{proof}
That $EG_\pi$ is contractible follows from \cite[Chapter~III,~Proposition~11.1]{LS2001}. The cellular $\ZZ G_\pi$-chain complex of $EG_\pi$ takes the following form in terms of the Fox derivatives $\frac{\partial r}{\partial x}$ and $\frac{\partial r}{\partial y}$, see~\cite{Fox1953}:
\[\ZZ G_\pi \xrightarrow{\begin{pmatrix}\frac{\partial r}{\partial x} & \frac{\partial r}{\partial y}\end{pmatrix}} \ZZ G_\pi^2\xrightarrow{\begin{pmatrix}x-1 \\ y-1 \end{pmatrix}} \ZZ G_\pi.\]
We will now construct a non-degenerate $\tau$-chain for the associated $D$-chain complex and simultaneously obtain that the complex is acyclic.
Note that acyclicity is also a general consequence of the existence of a non-degenerate $\tau$-chain by~\cite[Lemma~I.2.5]{Turaev2001}.

Since $r$ is assumed to be cyclically reduced, the only case in which any of the generators is trivial in $\ZZ G_\pi$ is when $r$ consists of a single letter. Let us suppose for now that this is the case, and without loss of generality let us take $r = x$.
In this case, the chain complex under investigation becomes
\[\ZZ G_\pi \xrightarrow{\begin{pmatrix}1 & 0\end{pmatrix}} \ZZ G_\pi^2\xrightarrow{\begin{pmatrix}0 \\ y-1 \end{pmatrix}} \ZZ G_\pi.\]
Since $y \neq 1$ as $G=\langle y\rangle$, we immediately see that the complex is $D$-acyclic and comes with an obvious choice of a non-degenerate $\tau$-chain.

We will now assume that both generators represent non-trivial elements of $G_\pi$.
By~\cref{lemm:tgor:nontriviality}, the Fox derivative $\frac{\partial r}{\partial x}$ resp. $\frac{\partial r}{\partial y}$ represents the trivial element of $\ZZ G_\pi$ and hence of $D$ only if $x$ resp. $y$ does not appear in the word $r$, possibly inverted.
Since $G_\pi$ is not the free group on two generators, at least one of the letters $x$ and $y$ appears in this way, and hence at least one of the Fox derivatives represents an invertible element in $D$.

In conclusion, both differentials in $D\otimes C_*(EG_\pi)$ have maximal rank, namely $1$, and so the complex is acyclic, since it is a complex of modules over a skew field.

We obtain a non-degenerate $\tau$-chain by choosing the submatrices $S_1$ and $S_0$ to correspond to a non-trivial Fox derivative and the generator which is not the one with respect to which that Fox derivative was taken, respectively.
With this choice, the formula for the agrarian torsion is obtained from~\cref{theo:twisted:matrix_chain}.
\end{proof}

By the work of Waldhausen~\cite[Theorem~17.5 \& Theorem 19.4]{Waldhausen1978}, two presentation complexes associated to two $(2,1)$-presentations of isomorphic torsion-free two-generator one-relator groups are always simple homotopy equivalent. Since agrarian Betti numbers are homotopy invariant and agrarian torsion is a simple homotopy invariant by~\cite[Lemma~4.9]{HK2019}, \cref{lemm:tgor:torsion} actually implies that $EG$ is $D$-acyclic for every torsion-free two-generator one-relator group $G$ and its agrarian torsion can be calculated from any $(2, 1)$-presentation $\langle x, y\mid r\rangle$ with $r$ cyclically reduced.

Since the agrarian polytope is homotopy invariant by \cref{prop:polytope:invariance}, we obtain the following result even without appealing to the work of Waldhausen:

\begin{prop}
    \label{prop:tgor:polytope}
    Let $G$ be a torsion-free two-generator one-relator group that is not isomorphic to the free group on two generators, and let $\ZZ G\hookrightarrow D$ be an agrarian embedding.
    Denote the free part of the abelianisation of $G$ by $H$.
    If $\pi=\langle x, y\mid r\rangle$ is any $(2,1)$-presentation of $G$ such that $r$ is cyclically reduced and $x$ or $x^{-1}$ appears as a letter in $r$, we have
    \[\Pag{D_r}(G)=\Pag{D_r}(EG_\pi)=P([\partial r/\partial x])-P([y-1])\in \PolyT(H).\]
    If $y$ or $y^{-1}$ appears in $r$, then the analogous statement holds with the roles of $x$ and $y$ interchanged.
\end{prop}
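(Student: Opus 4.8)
The statement combines two things: the homotopy-invariance of the agrarian polytope (already available as \cref{prop:polytope:invariance}) and the explicit torsion computation of \cref{lemm:tgor:torsion}. I would therefore structure the proof as two short steps. First, I would address the case distinction on whether $r$ has length one: if $r$ is a single letter, then $G_\pi$ is infinite cyclic, $H\cong\ZZ$, and the formula must be checked directly against the computation $\torsionag(EG_\pi)=-[1]+[y-1]$ from \cref{lemm:tgor:torsion} (here $[1]$ has trivial Newton polytope, so $\Pag{D_r}(G)=-P([y-1])$, and one checks $P([y-1])$ lands in the correct translation class). If instead $r$ has length $\geq 2$ and is cyclically reduced, then since $G$ is not free on $\{x,y\}$, at least one generator genuinely occurs in $r$; we are assuming it is $x$ (the $y$ case being symmetric), so \cref{lemm:tgor:torsion} applies and gives
\[\torsionag[D_r](EG_\pi)=-\Big[\tfrac{\partial r}{\partial x}\Big]+[y-1]\in(D_r)^\times_{\mathrm{ab}}.\]

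**Passing to polytopes.** Applying the polytope homomorphism $P\colon \Dab[\Ore(DH)]/\{\pm h\}\to\PolyT(H)$ to $-\torsionag[D_r](EG_\pi)$ and using that $P$ is a group homomorphism, I get
\[\Pag{D_r}(EG_\pi)=P\big(-\torsionag[D_r](EG_\pi)\big)=P\big([\partial r/\partial x]\big)-P\big([y-1]\big)\in\PolyT(H).\]
This is the right-hand side of the claimed identity, so the only remaining point is the equality $\Pag{D_r}(G)=\Pag{D_r}(EG_\pi)$ on the left. Here I would invoke \cref{prop:polytope:invariance}: the agrarian polytope is a $G$-homotopy invariant, and $EG_\pi$ is a model for the universal cover of a $K(G,1)$ (it is contractible by \cref{lemm:tgor:torsion}, being the universal cover of the aspherical presentation complex of a torsion-free one-relator group), hence its agrarian polytope depends only on $G$, which is what the notation $\Pag{D_r}(G)$ records. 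One subtle bookkeeping point: \cref{lemm:tgor:torsion} requires $r$ cyclically reduced and asserts $D$-acyclicity with respect to \emph{any} agrarian embedding, in particular the rationalisation $D_r$, so the hypotheses of \cref{defi:polytope:agrarian} are met and $\Pag{D_r}(EG_\pi)$ is genuinely defined.

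**Where the difficulty sits.** The analytic content is entirely inherited from the earlier results, so the proof is short; the one place that needs a little care is the passage from the \emph{Dieudonné-determinant} value $[\partial r/\partial x]\in(D_r)^\times_{\mathrm{ab}}$ to its Newton polytope $P([\partial r/\partial x])$. The element $\partial r/\partial x$ lies in $\ZZ G\subset D_r$, and to read off its Newton polytope one wants to view it inside the twisted group ring $DH\subset\Ore(DH)=D_r$ via the isomorphism $\ZZ G\cong (\ZZ K)H$ of \cref{exam:agrarian:twisted}; the point established in \cref{lemm:tgor:nontriviality} is precisely that $\partial r/\partial x$ is a sum of $\pm$(distinct group elements), so no cancellation occurs and its support in $H$ is exactly the image in $H$ of those group elements. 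That guarantees $P([\partial r/\partial x])$ is a genuine single polytope and matches what Friedl--Tillmann call $\Polytope_\pi$ up to the conventions fixed earlier. Beyond this, there is nothing to grind through: the symmetry between $x$ and $y$ is immediate from the symmetric statement of \cref{lemm:tgor:torsion}, and additivity of $P$ handles the virtual-polytope formalism automatically.
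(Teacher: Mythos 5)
Your proposal is correct and follows exactly the route the paper intends: the paper states this proposition without a separate proof, as an immediate consequence of the torsion computation in \cref{lemm:tgor:torsion}, the definition of the agrarian polytope via the polytope homomorphism, and the $G$-homotopy invariance from \cref{prop:polytope:invariance} together with the uniqueness of $EG$ up to $G$-homotopy equivalence. Your additional remarks on the single-letter case and on the non-cancellation of the support of $\partial r/\partial x$ (via \cref{lemm:tgor:nontriviality}) are consistent with how the paper handles these points in \cref{lemm:tgor:torsion} and in the later identification with $\Polytope_\pi$.
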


Since the space $EG$ is unique up to $G$-homotopy equivalent, the polytope $\Pag{D_r}(G)$ is an invariant of the group $G$ and does not depend on the choice of a $(2,1)$-presentation.

\smallskip

In \cite{FT2015}, Friedl and Tillmann associate a polytope to \emph{nice} $(2, 1)$-presentations, which are defined as follows:

\begin{defi}
\label{nice def}
A $(2, 1)$-presentation $\pi=\langle x,y\mid r\rangle$ giving rise to a group $G_\pi$ is called \emph{nice} if
\begin{enumerate}[label={(\arabic*)}]
    \item $r$ is a non-empty word,
    \item $r$ is cyclically reduced and
    \item $b_1(G_\pi)=2$.
\end{enumerate}
\end{defi}

Their construction of the polytope is equivalent to the following definition by by~\cite[Proposition~3.5]{FT2015}:

\begin{defi}
\label{defi:tgor:polytope}
Let $\pi=\langle x, y \mid r\rangle$ be a nice $(2,1)$-presentation giving rise to a group $G_\pi$. Denote by $H$ the free part of the abelianisation of $G$ and write $\overbar w$ for the image of an element $w\in\ZZ G$ under the projection to $\ZZ H$. Then we set
\[\Polytope_\pi \coloneqq P\left(\overbar{\frac{\partial r}{\partial x}}\right) - P\big(\overbar{y-1}\big)=P\left(\overbar{\frac{\partial r}{\partial y}}\right) - P\big(\overbar{x-1}\big)\in \PolyT(H).\]
\end{defi}

It is shown in~\cite[Proposition~3.5]{FT2015} that the element $\Polytope_\pi\in\PolyT(H)$
defined in this way is indeed a single polytope.

For a nice $(2,1)$-presentation $\pi$, Friedl and Tillmann also endow $\Polytope_\pi$ with a marking of vertices, turning it into a marked polytope $\MarkedPolytope_\pi$.
A vertex of $\Polytope_\pi$ is declared marked if any of its duals contains a character lying in $\Sigma^1(G)$. Friedl--Tillmann prove in~\cite[Theorem~1.1]{FT2015} that every character lying in any dual of a marked vertex lies in $\Sigma^1(G)$, and hence the markings of $\Polytope_\pi$ and $\Sigma^1(G)$ determine one another.

If $\pi=\langle x, y\mid r\rangle$ and $\pi'=\langle x, y\mid r'\rangle$ are two $(2,1)$-presentations such that there exists an automorphism $f\colon \langle x, y\rangle\to\langle x, y\rangle$ of the free group on two generators satisfying $f(r)=r'$, then the two presentations clearly define isomorphic groups. The automorphism $f$ induces an isomorphism $\overbar f\colon H_\pi\to H_{\pi'}$ between the free parts of the abelianisations of $G_\pi$ and $G_{\pi'}$.
\begin{prop}
    \label{prop:tgor:nielsen}
    Let $\pi=\langle x, y \mid r\rangle$ and $\pi'=\langle x, y\mid r'\rangle$ be two nice $(2,1)$-presentations.
    Assume that there exists an automorphism $f\colon \langle x, y \rangle \to \langle x, y\rangle$ with $f(r)=r'$.
    Then
    \[\Polytope_{\pi'} = \Polytope_T(\overbar{f})(\Polytope_\pi)\in\PolyT(H_{\pi'}).\]
\end{prop}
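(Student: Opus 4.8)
The plan is to reformulate $\Polytope_\pi$ as the Newton polytope of a single element $\delta_\pi\in\ZZ H_\pi$ — essentially the two–variable Alexander polynomial of $G_\pi$ — and then to compare $\delta_{\pi'}$ with $\overbar f(\delta_\pi)$ by differentiating the relation $r'=f(r)$ via Fox calculus. Write $F=\langle x,y\rangle$, put $(z_1,z_2)=(x,y)$, and for $w\in\ZZ F$ let $\overbar w$ denote its image under abelianisation, as in \cref{defi:tgor:polytope}. Since $\pi$ and $\pi'$ are nice, the exponent sums of $x$ and of $y$ in $r$ (and in $r'$) vanish, so $\overbar r=\overbar{r'}=1$, both $H_\pi$ and $H_{\pi'}$ are $\ZZ^2$ with the evident bases, and the isomorphism $G_\pi\to G_{\pi'}$ induced by $f$ induces the ring isomorphism $\overbar f\colon\ZZ H_\pi\to\ZZ H_{\pi'}$ determined by $\bar x\mapsto\overbar{f(x)}$, $\bar y\mapsto\overbar{f(y)}$; by functoriality of abelianisation, $\overbar{f(w)}=\overbar f(\overbar w)$ for all $w\in\ZZ F$.

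First I would establish the Alexander–polynomial reformulation. Pushing the fundamental Fox identity \eqref{eq:tgor:fundamental} into $\ZZ H_\pi$ — which is legitimate precisely because $\overbar r=1$ — yields $\overbar{\partial r/\partial x}\,(\bar x-1)+\overbar{\partial r/\partial y}\,(\bar y-1)=0$. Since $\ZZ H_\pi\cong\ZZ[s^{\pm1},t^{\pm1}]$ is a unique factorisation domain in which $\bar x-1$ and $\bar y-1$ are coprime, the kernel of $(a,b)\mapsto a(\bar x-1)+b(\bar y-1)$ on $\ZZ H_\pi^{2}$ is freely generated by $(\bar y-1,\,-(\bar x-1))$. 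Hence there is a unique $\delta_\pi\in\ZZ H_\pi$ with $\big(\overbar{\partial r/\partial x},\,\overbar{\partial r/\partial y}\big)=\delta_\pi\cdot(\bar y-1,\,-(\bar x-1))$; as $\Polytope_\pi$ is a genuine polytope by \cite[Proposition~3.5]{FT2015} we get $\delta_\pi\neq0$, and then $P(\overbar{\partial r/\partial x})=P(\delta_\pi)+P(\overbar{y-1})$ gives $\Polytope_\pi=P(\delta_\pi)$. The same discussion applied to $\pi'$ gives $\Polytope_{\pi'}=P(\delta_{\pi'})$, so it suffices to show that $\delta_{\pi'}$ and $\overbar f(\delta_\pi)$ differ by a unit of $\ZZ H_{\pi'}$.

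The comparison I would carry out using the chain rule for Fox derivatives (see \cite{Fox1953}). Let $M=\big(\overbar{\partial f(z_i)/\partial z_j}\big)_{i,j}\in M_2(\ZZ H_{\pi'})$ be the abelianised Fox Jacobian of $f$. Differentiating $r'=f(r)$ and abelianising gives the row-vector identity $\big(\overbar{\partial r'/\partial x},\,\overbar{\partial r'/\partial y}\big)=\big(\overbar f(\overbar{\partial r/\partial x}),\,\overbar f(\overbar{\partial r/\partial y})\big)\cdot M$; differentiating $\id=f\circ f^{-1}=f^{-1}\circ f$ over $\ZZ F$ shows that the Fox Jacobian of an automorphism is invertible (with inverse the image under $f$ of the Fox Jacobian of $f^{-1}$), so $M\in\GL_2(\ZZ H_{\pi'})$ and $\det M$ is a unit of $\ZZ[s^{\pm1},t^{\pm1}]$, hence of the form $\pm h_0$ with $h_0\in H_{\pi'}$. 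Substituting the factorisation $\big(\overbar{\partial r/\partial x},\overbar{\partial r/\partial y}\big)=\delta_\pi\cdot(\bar y-1,-(\bar x-1))$ into the right-hand side, rewriting $\overbar f(\bar x-1)=\overbar{f(x)}-1$ and using the fundamental Fox formula $M\binom{\bar x-1}{\bar y-1}=\binom{\overbar{f(x)}-1}{\overbar{f(y)}-1}$, the right-hand side collapses, via the rank-two identity $M^{\mathsf T}JM=(\det M)\,J$ with $J=\big(\begin{smallmatrix}0&-1\\1&0\end{smallmatrix}\big)$, to $\overbar f(\delta_\pi)\,(\det M)\,(\bar y-1,-(\bar x-1))$. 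Comparing with $\big(\overbar{\partial r'/\partial x},\overbar{\partial r'/\partial y}\big)=\delta_{\pi'}\cdot(\bar y-1,-(\bar x-1))$ and cancelling the nonzero row vector over the domain $\ZZ H_{\pi'}$ gives $\delta_{\pi'}=(\det M)\,\overbar f(\delta_\pi)$. Since $\det M=\pm h_0$, the polytopes $P(\delta_{\pi'})$ and $P(\overbar f(\delta_\pi))$ differ by the translation by $h_0$ and so coincide in $\PolyT(H_{\pi'})$; as $\overbar f$ comes from a group isomorphism $H_\pi\to H_{\pi'}$ we have $P(\overbar f(\delta_\pi))=\PolyT(\overbar f)(P(\delta_\pi))$, and combining with the reformulation above yields $\Polytope_{\pi'}=\PolyT(\overbar f)(\Polytope_\pi)$.

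I expect the main obstacle to be bookkeeping rather than anything conceptual: pinning down the exact normalisation and direction of the Fox chain rule, and verifying invertibility of the Fox Jacobian of an automorphism over $\ZZ F$ (a classical fact that follows formally from the chain rule together with $\id=f\circ f^{-1}$). The feature that makes the argument go through cleanly is the symplectic identity $M^{\mathsf T}JM=(\det M)\,J$ in rank two, which is exactly what turns the a priori mixed linear combination produced by the chain rule back into a scalar multiple of $(\bar y-1,-(\bar x-1))$.
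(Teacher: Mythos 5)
Your proof is correct, but it takes a genuinely different route from the one in the paper. The paper reduces to the three elementary Nielsen generators of $\Aut(F_2)$ and verifies the claim for each by a direct application of the Fox chain rule, with the swap $x\leftrightarrow y$ requiring an extra manipulation via the fundamental formula \eqref{eq:tgor:fundamental}. You instead treat an arbitrary automorphism at once: you first identify $\Polytope_\pi$ with the Newton polytope of the Alexander polynomial $\delta_\pi$ (using that niceness forces $\overbar r=1$, so the abelianised fundamental formula exhibits $\bigl(\overbar{\partial r/\partial x},\overbar{\partial r/\partial y}\bigr)$ as a Koszul syzygy of the regular sequence $(\bar x-1,\bar y-1)$ in $\ZZ[s^{\pm1},t^{\pm1}]$), and then compare $\delta_{\pi'}$ with $\overbar f(\delta_\pi)$ via the abelianised Fox Jacobian $M$, whose invertibility and the rank-two identity $M^{\mathsf T}JM=(\det M)J$ yield $\delta_{\pi'}=(\det M)\,\overbar f(\delta_\pi)$ with $\det M=\pm h_0$ a unit. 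All the ingredients check out: the syzygy claim is the standard Koszul exactness for the regular sequence $(s-1,t-1)$; the invertibility of the Fox Jacobian of an automorphism follows formally from the chain rule applied to $f\circ f^{-1}=\id$; and the non-vanishing of $\delta_\pi$ is guaranteed by \cite[Proposition~3.5]{FT2015}, which you cite. What your approach buys is uniformity (no appeal to a generating set of $\Aut(F_2)$ and no case analysis) and a conceptually cleaner statement, namely that $\Polytope_\pi$ is the Newton polytope of the Alexander polynomial, which transforms under $f$ by $\overbar f$ up to a unit; what the paper's approach buys is that it is entirely elementary, needing only the chain rule and the generation of $\Aut(F_2)$ by Nielsen transformations, and it avoids the commutative-algebra input about syzygies.
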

\begin{proof}
    The automorphism group of a finitely generated free group is generated by the elementary Nielsen transformations, which in the case of two generators $x$ and $y$ consist of the following operations:
    \begin{enumerate}
        \item Interchange $x$ and $y$: $f_1(x)=y, f_1(y)=x$.
        \item Replace $x$ with $x^{-1}$: $f_2(x)=x^{-1}, f_2(y)=y$.
        \item Replace $x$ with $xy$: $f_3(x)=xy, f_3(y)=y$.
    \end{enumerate}
    Since the statement of the proposition is functorial in $f$, we are thus left to show that $\Polytope_{\pi'} = \Polytope_T(\overbar{f})(\Polytope_\pi)$ holds whenever $f$ is one of $f_1, f_2$ and $f_3$.

    The chain rule for Fox derivatives~\cite[(2.6)]{Fox1953} applied to $f$ takes the following form:
    \begin{equation*}
        \label{eq:tgor:chain_rule}
        \frac{\partial}{\partial x} f(r) = f\Big(\frac{\partial}{\partial x} r\Big)\cdot\frac{\partial}{\partial x} f(x) + f\Big(\frac{\partial}{\partial y} r\Big)\cdot \frac{\partial}{\partial x} f(y).
    \end{equation*}
    For the three elementary Nielsen transformations, we obtain
    \begin{alignat*}{6}
        \frac{\partial}{\partial x} f_1(r) &= f_1\Big(\frac{\partial}{\partial x} r\Big)\cdot 0 &&+ f_1\Big(\frac{\partial}{\partial y} r\Big)\cdot 1&&=f_1\Big(\frac{\partial}{\partial y} r\Big)\\
        \frac{\partial}{\partial x} f_2(r) &= f_2\Big(\frac{\partial}{\partial x} r\Big)\cdot (-x^{-1}) &&+ f_2\Big(\frac{\partial}{\partial y} r\Big)\cdot 0&&=f_2\Big(\frac{\partial}{\partial x} r\Big)\cdot (-x^{-1})\\
        \frac{\partial}{\partial x} f_3(r) &= f_3\Big(\frac{\partial}{\partial x} r\Big)\cdot 1 &&+ f_3\Big(\frac{\partial}{\partial y} r\Big)\cdot 0&&=f_3\Big(\frac{\partial}{\partial x} r\Big).
    \end{alignat*}
  When $f=f_2$ or $f=f_3$, we read off that $\partial r'/\partial x$ and $f(\partial r/\partial x)$ differ only by a factor of the form $\pm g$ for some $g\in G_{\pi'}$.
    It follows that $P(\partial r'/\partial x)$ and $\Poly(\overbar f)(P(\partial r/\partial x))$ agree up to translation and hence define the same class in $\PolyT(H_{\pi'})$.
    Since $f(y)=y$ in these cases, the same holds true for the polytopes $\Polytope_\pi$ and $\Polytope_{\pi'}$.

    For $f_1$, we obtain using~\eqref{eq:tgor:fundamental} that
    \begin{align*}
        \Polytope_{\pi'}&=P\Big(\frac{\partial}{\partial x} f_1(r)\Big) - P(y-1)
        =\PolyT(\overbar{f_1})\Big(P\Big(\frac{\partial}{\partial y} r\Big)\Big) - \PolyT(\overbar{f_1})\Big(P(x-1)\Big)\\
        &=\PolyT(\overbar{f_1})\Big(P\Big(\big(\frac{\partial}{\partial y} r\big)(y-1)\Big) - P\Big(y-1\Big) - P\Big(x-1\Big)\Big)\\
        &=\PolyT(\overbar{f_1})\Big(P\Big(\big(\frac{\partial}{\partial x} r\big)(x-1)\Big) - P\Big(x-1\Big) - P\Big(y-1\Big)\Big)\\
        &=\PolyT(\overbar{f_1})\Big(P\Big(\frac{\partial}{\partial x} r\Big) - P\Big(y-1\Big)\Big) = \PolyT(\overbar{f_1}) (\Polytope_\pi),
    \end{align*}
    which concludes the proof also in this case.
\end{proof}

There are $(2,1)$-presentations $\pi=\langle x, y\mid r\rangle$ and $\pi'=\langle x, y\mid r'\rangle$ giving rise to isomorphic groups, such that no isomorphism lifts to an automorphism of $\langle x, y\rangle$ mapping $r$ to $r'$. The first examples of such pairs of presentations appeared in \cite{MP1973}, one of which is $\langle x, y \mid x^2y^{-2}x^2y^{-3}\rangle \cong \langle x, y \mid x^2y^{-5}\rangle$. This raises the question whether the (marked) polytopes associated to $\pi$ and $\pi'$ are still related. A possible answer to this question has been formulated as a conjecture by Friedl and Tillmann:
\begin{conj}[{\cite[Conjecture~1.2]{FT2015}}]
\label{conj:tgor:invariance}
If $G$ is a group admitting a nice $(2, 1)$-presentation $\pi$, then $\MarkedPolytope_\pi\subset H_1(G;\RR)$ is an invariant of $G$ (up to translation).
\end{conj}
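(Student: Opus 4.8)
The plan is to deduce invariance of the \emph{marked} polytope from invariance of the bare polytope $\Polytope_\pi\in\PolyT(H)$. Once $\Polytope_\pi$ is known to depend only on $G$, so do its faces and their duals (duals are insensitive to the translation ambiguity in $\Polytope_\pi$), and by construction a vertex of $\Polytope_\pi$ is marked exactly when one of its duals meets $\Sigma^1(G)$, which is intrinsic to $G$; hence $\MarkedPolytope_\pi$ is determined by $\Polytope_\pi$ and $G$. So everything reduces to showing that $\Polytope_\pi$ does not depend on the chosen nice $(2,1)$-presentation, and here I would treat the torsion-free case and the case of groups with torsion separately.

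For torsion-free $G$ I would identify $\Polytope_\pi$ with the agrarian polytope $\Pag{D_r}(G)$, which is a group invariant since by \cref{prop:tgor:polytope} it equals $\Pag{D_r}(EG_\pi)$ and the latter is a $G$-homotopy invariant by \cref{prop:polytope:invariance} (with $EG$ unique up to $G$-homotopy). It then remains to reconcile the formulas $\Polytope_\pi=P(\overbar{\partial r/\partial x})-P(\overbar{y-1})$ and $\Pag{D_r}(G)=P([\partial r/\partial x])-P([y-1])$, where the first Newton polytope is computed in $\ZZ H$ and the second in the twisted group ring underlying $D_r$. Because $b_1(G)=2$ forces $\overbar x,\overbar y\neq 0$ in $H$, both $P(\overbar{y-1})$ and $P([y-1])$ equal the segment $\conv\{0,\overbar y\}$, so it suffices to prove $P(\overbar{\partial r/\partial x})=P([\partial r/\partial x])$. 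By \cref{lemm:tgor:nontriviality} we may write $\partial r/\partial x=\sum_j\epsilon_j g_j$ with $\epsilon_j=\pm1$ and the $g_j$ pairwise distinct; writing $g_j=k_j\cdot s(\overbar{g_j})$ with $k_j\in K$, the $DH$-coefficient of a lattice point $h$ is the image under the injective agrarian embedding of $\sum_{\overbar{g_j}=h}\epsilon_j k_j\in\ZZ K$, a $\pm1$-combination of distinct elements of $K$, hence nonzero. This gives $\supp(\overbar{\partial r/\partial x})\subseteq\supp([\partial r/\partial x])=\{\overbar{g_j}\}$, and equality of the Newton polytopes follows once one knows that each vertex of $\conv\{\overbar{g_j}\}$ is the image of exactly one $g_j$ --- the \enquote{unique leading term} property of Fox derivatives of cyclically reduced words that underlies \cref{lemm:tgor:nontriviality} and \cite[Proposition~3.5]{FT2015}.

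For $G$ with torsion the agrarian approach is unavailable, since agrarian groups are torsion-free, and I would instead use the rigidity of one-relator groups with torsion, whose relator is a proper power: by a theorem of Pierce any two nice $(2,1)$-presentations of such a $G$ are carried into one another by an automorphism $f$ of the free group on $\{x,y\}$ (and every isomorphism $G_\pi\cong G_{\pi'}$ is, up to an inner automorphism, induced by such an $f$, so that the induced $\overbar f$ on first homology is the canonical identification). \cref{prop:tgor:nielsen} then yields $\Polytope_{\pi'}=\PolyT(\overbar f)(\Polytope_\pi)$, which is the asserted invariance up to translation.

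I expect the genuine obstacle to be the comparison $P(\overbar{\partial r/\partial x})=P([\partial r/\partial x])$ in the torsion-free case: passing to the agrarian skew field can only enlarge the support, and one must rule out that it does so at a vertex of the Newton polytope --- precisely the point where the combinatorics of Fox derivatives of cyclically reduced words, in the refined form alluded to in the proof of \cref{lemm:tgor:nontriviality}, has to be invoked. A secondary point needing care is the bookkeeping of which of $\conv\{0,\overbar x\}$ and $\conv\{0,\overbar y\}$ is subtracted off, which is why \cref{prop:tgor:polytope} carries the case distinction on whether $x$ or $y$ appears in $r$; consistency of the two resulting descriptions of $\Polytope_\pi$ is exactly the fundamental Fox identity \eqref{eq:tgor:fundamental}.
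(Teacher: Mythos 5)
Your proposal is correct and follows essentially the same route as the paper: reduce invariance of the marked polytope to that of the bare polytope via $\Sigma^1(G)$, identify $\Polytope_\pi$ with the $G$-homotopy-invariant agrarian polytope $\Pag{D_r}(G)$ when $G$ is torsion-free, and combine the Pride--Pierce rigidity of one-relator presentations with torsion with \cref{prop:tgor:nielsen} otherwise (where the paper additionally notes $\Sigma^1(G)=\emptyset$, so only the bare polytope matters there). Two detail-level differences: you correctly supply the comparison $P(\overbar{\partial r/\partial x})=P([\partial r/\partial x])$ between the $\ZZ H$- and $DH$-supports (no cancellation at vertices, via the property underlying \cite[Proposition~3.5]{FT2015}), a point the paper leaves implicit; conversely, the rigidity theorem only produces the free-group automorphism up to replacing $r$ by $r^{-1}$, a case your write-up omits and which the paper handles by the product rule $\partial r^{-1}/\partial x=-r^{-1}\,\partial r/\partial x$.
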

In more formal terms, the conjecture asserts that if $f\colon G_\pi\to G_{\pi'}$ is an isomorphism of two groups associated to $(2,1)$-presentations $\pi$ and $\pi'$, then $\Polytope_{\pi'}=\PolyT(\overbar f)(\Polytope_\pi)\in \PolyT(H_\pi)$, where $\overbar f\colon H_\pi \to H_{\pi'}$ is the isomorphism of the free parts of the abelianisations of $G_\pi$ and $G_{\pi}'$ induced by $f$.

As evidence for their conjecture, Friedl and Tillmann prove:
\begin{theo}[{\cite[Theorem~1.3]{FT2015}}]
\label{theo:tgor:residually_elementary_amenable}
If $G$ is a torsion-free group admitting a nice $(2, 1)$-presentation $\pi$ and $G$ is residually \{torsion-free elementary amenable\}, then $\MarkedPolytope_\pi\subset H_1(G;\RR)$ is an invariant of $G$ (up to translation).
\end{theo}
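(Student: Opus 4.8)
The plan is to identify Friedl--Tillmann's polytope $\Polytope_\pi$ with an intrinsically defined agrarian invariant of $G$ and to read off the marking from the BNS invariant. Being torsion-free and two-generator one-relator, $G$ is agrarian by \cref{theo:tgor:agrarian}; fix an agrarian embedding $\ZZ G\hookrightarrow D$ and let $\ZZ G\hookrightarrow D_r$ be its rationalisation, with $H$ the free part of the abelianisation of $G$. By \cref{lemm:tgor:torsion} the universal cover $EG_\pi$ of the presentation $2$-complex of $\pi$ is contractible and $D_r$-acyclic, so the agrarian polytope $\Pag{D_r}(G)\coloneqq\Pag{D_r}(EG_\pi)\in\PolyT(H)$ is defined, and by \cref{prop:polytope:invariance} it is a $G$-homotopy invariant of $EG_\pi$; in particular it does not depend on the presentation $\pi$. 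Hence the whole statement will follow once we prove the identity $\Polytope_\pi=\Pag{D_r}(G)$ in $\PolyT(H)$: by \cite[Theorem~1.1]{FT2015} the markings of $\Polytope_\pi$ are exactly the duals of those vertices that meet $\Sigma^1(G)$, and $\Sigma^1(G)\subseteq H^1(G;\RR)$ is natural under isomorphisms of $G$, so the marked polytope $\MarkedPolytope_\pi$ inherits the invariance of $\Polytope_\pi$.

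To prove the polytope identity I would argue at the level of supports. From $b_1(G)=2$ one sees that both $x$ and $y$ occur in $r$ and that $P(\overbar{y-1})$ is the segment $[0,\overbar y]$ whether computed in $\ZZ H$ or in the twisted group ring $DH$; so by \cref{prop:tgor:polytope} and \cref{defi:tgor:polytope} the identity reduces to the assertion that the Newton polytope of the Fox derivative $\partial r/\partial x$ in $DH$ agrees, up to translation, with that of its abelianisation $\overbar{\partial r/\partial x}\in\ZZ H$. By \cref{lemm:tgor:nontriviality}, $\partial r/\partial x$ is a sum $\sum_j\pm w_j$ of monomials representing pairwise distinct elements of $G$; therefore no cancellation occurs in $\ZZ G$, every coefficient of $\partial r/\partial x$ in $DH$ is a nonzero element of $D$, and $P([\partial r/\partial x])$ is precisely the convex hull of the set $\{\overbar{w_j}\}$ of $H$-images of the monomials. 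On the other hand $P(\overbar{\partial r/\partial x})$ is the convex hull of the subset of $\{\overbar{w_j}\}$ on which the \emph{signed} count of monomials is nonzero; so it is contained in $P([\partial r/\partial x])$, and the two agree if and only if no vertex of $\conv\{\overbar{w_j}\}$ is annihilated by abelianisation.

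This last point --- that at each vertex of $\conv\{\overbar{w_j}\}$ the monomials of $\partial r/\partial x$ mapping there all carry the same sign --- is the crux and, I expect, the main obstacle. It should be extracted from the combinatorics of the cyclically reduced word $r$ underlying \cite[\S3]{FT2015}: a vertex is cut out by a character $\psi\colon H\to\RR$, and one must show that the prefixes of $r$ contributing the $\psi$-extreme monomials of $\partial r/\partial x$ are forced to appear with a single sign, strengthening the distinctness statement of \cref{lemm:tgor:nontriviality}. An alternative that makes the residual hypothesis do real work: taking $D$ to be the Linnell skew field $\linnelld(G)$ --- legitimate because a torsion-free residually \{torsion-free elementary amenable\} group satisfies the strong Atiyah conjecture over $\QQ$ (the property exploited in \cite{FT2015}), so that $\ZZ G\hookrightarrow\linnelld(G)$ is an agrarian embedding by \cite[Theorem~10.39]{Luck2002} --- one compares the two polytopes in each integral direction $\phi\colon G\to\ZZ$. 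By \cref{theo:twisted:thickness} and \cref{coro:twisted:laurent_computation}, $\thickness_\phi(\Pag{D_r}(G))$ is a $D_r$-Betti number of the $\ker\phi$-cover of $EG_\pi$, which Lück approximation along a residual chain of torsion-free-elementary-amenable quotients identifies with an ordinary first Betti number; this matches the integer produced by Brown's algorithm \cite{Brown1987} that defines $\thickness_\phi(\Polytope_\pi)$, and tracking the extreme monomials in each direction then supplies the non-cancellation needed to conclude.

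Finally, I would note that the first (purely combinatorial) route uses nothing beyond the word $r$, so the same argument in fact removes the residual hypothesis and proves the full invariance statement \cref{theo:tgor:invariance}; the residual hypothesis of \cite{FT2015} is needed only to make the $L^2$-theoretic comparison --- equivalently, the Linnell skew field and Lück approximation --- available, as in the second route.
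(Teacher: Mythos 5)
Your overall strategy is the paper's: the statement under review is quoted from \cite{FT2015} and not reproved here; instead the paper derives the stronger \cref{theo:tgor:invariance}, whose torsion-free case is exactly what you reconstruct --- identify $\Polytope_\pi$ with the presentation-independent polytope $\Pag{D_r}(G)=\Pag{D_r}(EG_\pi)$ via \cref{lemm:tgor:torsion}, \cref{prop:tgor:polytope} and \cref{prop:polytope:invariance}, then transport the marking through \cite[Theorem~1.1]{FT2015} and the naturality of $\Sigma^1$. Your reduction of everything to the identity $P([\partial r/\partial x])=P(\overbar{\partial r/\partial x})$ is also the right bookkeeping. The problem is that you stop at what you yourself call the crux, and the combinatorial statement you propose to prove there --- that at each vertex of $\conv\{\overbar{w_j}\}$ all contributing monomials of $\partial r/\partial x$ carry the same sign --- is \emph{false}. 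The paper's closing example witnesses this: the eight monomials of $\partial r/\partial a$ abelianise to $(-1,0)$, $(-1,-1)$ and $(-1,-2)$, and the extreme point $(-1,-2)$ receives exactly two monomials with opposite signs, so it is annihilated in $\ZZ H$. Thus the $\ZZ H$-Newton polytope of the abelianised Fox derivative (a segment of length $1$) is strictly smaller than the $DH$-Newton polytope (a segment of length $2$), and your first route cannot be repaired.

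The actual resolution is definitional rather than combinatorial. Friedl--Tillmann define $\Polytope_\pi$ from the trace of the cyclically reduced word $r$ in $H$, with no cancellation; \cite[Proposition~3.5]{FT2015}, which the paper invokes at \cref{defi:tgor:polytope}, identifies this with the convex hull of the images in $H$ of \emph{all} group elements occurring in $\partial r/\partial x$ (minus the segment $[0,\overbar y]$). Read this way, the identity $\Polytope_\pi=\Pag{D_r}(EG_\pi)$ is immediate from \cref{lemm:tgor:nontriviality}: the $w_j$ are pairwise distinct in $G$, so every $D$-coefficient of $\partial r/\partial x$ in $DH$ is the image of a nonzero element of $\ZZ G$ and hence nonzero, and both sides are the same cancellation-free hull. (The paper's example computation confirms that this, and not the literal $\ZZ H$-projection, is the intended meaning of $\overbar{\phantom{x}}$ in \cref{defi:tgor:polytope}.) Your second route is not circular in principle --- containment plus equality of $\thickness_\phi$ for all $\phi$ does force equality of polytopes --- but identifying $\thickness_\phi(\Polytope_\pi)$ with a Betti number of the $\ker\phi$-cover via L\"uck approximation is essentially the technical core of \cite{FT2015} itself, and you do not carry it out. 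Your final remark is nonetheless correct: once the definition of $\Polytope_\pi$ is taken from \cite[Proposition~3.5]{FT2015}, the residual hypothesis plays no role, which is precisely why the paper obtains \cref{theo:tgor:invariance} for all torsion-free $G$.
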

They further remark that the polytope does not change (up to translation) when the relator is permuted cyclically.

Making use of their construction of universal $L^2$-torsion, Friedl and Lück resolved this conjecture and provided a construction of $\MarkedPolytope_\pi$ intrinsic to the group $G$ under the additional assumption that $G$ is torsion-free and satisfies the Atiyah conjecture:
\begin{theo}[{\cite[Remark~5.5]{FL2017}}]
\label{theo:tgor:atiyah}
If $G$ is a torsion-free group admitting a nice $(2, 1)$-presentation $\pi$ and $G$ satisfies the Atiyah conjecture, then $\MarkedPolytope_\pi\subset H_1(G;\RR)$ is an invariant of $G$ (up to translation). Moreover, $\Polytope_\pi=\PLtwo(G)$.
\end{theo}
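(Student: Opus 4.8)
The plan is to recognise the $L^2$-polytope $\PLtwo(G)$ as a special case of the agrarian polytope and then to compute that polytope from a presentation using \cref{prop:tgor:polytope}. Since $G$ is torsion-free and satisfies the Atiyah conjecture, the inclusion $\ZZ G\hookrightarrow D$ of $\ZZ G$ into Linnell's skew field $D\coloneqq\linnelld(G)$ is an agrarian embedding, and, as recalled in Section~2, $D$ is the Ore localisation of the twisted group ring $\linnelld(K)H$, where $K=\ker(\pr\colon G\to H)$. The first step is to check that $\PLtwo(G)=\Pag{D_r}(G)$. This is routine: the universal $L^2$-torsion $\torsionu(EG)$ is sent to the $D$-agrarian torsion of $EG$ under the Dieudonné determinant $\det_D$ — the two invariants are assembled from a chain contraction in parallel ways, compare \cite{HK2019} — and on an element $w=\sum_h u_h\ast h$ of $\ZZ G\cong(\ZZ K)H$ the polytope homomorphism of $D$ used to define $\PLtwo$ and the polytope homomorphism of $D_r$ used to define $\Pag{D_r}$ both send $w$ to $\conv\{h\mid u_h\neq0\}$, so applying them to the matching torsions yields $\PLtwo(G)=\Pag{D_r}(G)$.

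For the second step I would apply \cref{prop:tgor:polytope} to a nice $(2,1)$-presentation $\pi=\langle x,y\mid r\rangle$ of $G$, say with $x$ occurring in $r$: then $\Pag{D_r}(G)=P([\partial r/\partial x])-P([y-1])$. As $\partial r/\partial x$ and $y-1$ lie in $\ZZ G\cong(\ZZ K)H$ with $\ZZ K\hookrightarrow D$, these Newton polytopes are the convex hulls in $\RR\otimes_\ZZ H$ of the images of the $\ZZ G$-supports of $\partial r/\partial x$ and of $y-1$. By \cref{lemm:tgor:nontriviality} the $\ZZ G$-support of $\partial r/\partial x$ consists of $s$ pairwise distinct elements $g_1,\dots,g_s$, one for each occurrence of $x^{\pm1}$ in $r$, so $P([\partial r/\partial x])=\conv\{\overbar{g_1},\dots,\overbar{g_s}\}$; and since $b_1(G)=2$ we have $\overbar y\neq0$, whence $P([y-1])=\conv\{0,\overbar y\}=P(\overbar{y-1})$. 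Comparing with $\Polytope_\pi=P(\overbar{\partial r/\partial x})-P(\overbar{y-1})$ from \cref{defi:tgor:polytope}, the whole theorem comes down to the identity $\conv\{\overbar{g_1},\dots,\overbar{g_s}\}=P(\overbar{\partial r/\partial x})$ in $\RR\otimes_\ZZ H$.

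The inclusion $P(\overbar{\partial r/\partial x})\subseteq\conv\{\overbar{g_j}\}$ is automatic, so the remaining point — which I expect to be the only non-formal one, and hence the main obstacle — is that no vertex of $\conv\{\overbar{g_j}\}$ is destroyed by abelianisation: whenever $\overbar{g_j}$ is a vertex, the signs of the terms $w_{j'}$ of $\partial r/\partial x$ with $\overbar{g_{j'}}=\overbar{g_j}$ must not cancel. I would establish this from the combinatorial structure of the Fox derivative of a cyclically reduced relator: the $w_j$ are exactly the signed proper prefixes of the cyclic word $r$ terminating in a letter $x^{\pm1}$, and for any linear functional the prefixes achieving an extreme value form a cyclically consecutive block of $r$; because $r$ is cyclically reduced (and $b_1(G)=2$ forces $\overbar x\neq0$) this block cannot abelianise in a way that produces cancellation. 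This is the analysis underlying Brown's algorithm, and it is essentially what already makes $\Polytope_\pi$ a genuine single polytope in \cite[Proposition~3.5]{FT2015}. Granting it, $P([\partial r/\partial x])=P(\overbar{\partial r/\partial x})$, and the three steps combine to give $\PLtwo(G)=\Pag{D_r}(G)=\Polytope_\pi$. Since $\PLtwo(G)$ depends only on $G$ up to translation, so does $\Polytope_\pi$; and by \cite[Theorem~1.1]{FT2015} the marking of $\MarkedPolytope_\pi$ records precisely which vertices of $\Polytope_\pi$ have a dual meeting $\Sigma^1(G)$, which is itself an isomorphism invariant of $G$, so $\MarkedPolytope_\pi$ is an invariant of $G$ up to translation.
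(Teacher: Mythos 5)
First, a point of orientation: the paper does not prove this statement at all --- it is quoted from Friedl--Lück~\cite{FL2017}*{Remark~5.5} purely as motivation for \cref{theo:tgor:invariance}, whose proof in the torsion-free case runs along exactly the lines you propose: compute the torsion of $EG_\pi$ from the presentation (\cref{lemm:tgor:torsion}), pass to the polytope (\cref{prop:tgor:polytope}), invoke $G$-homotopy invariance of the agrarian polytope (\cref{prop:polytope:invariance}), and recover the marking from $\Sigma^1(G)$ via \cite{FT2015}*{Theorem~1.1}. So your strategy is the right one and matches the paper's treatment of the generalisation. Your first step, $\PLtwo(G)=\Pag{D_r}(G)$ for $D=\linnelld(G)$, is indeed the content of the $L^2$-comparison results of \cite{HK2019} and should simply be cited; note only that the paper's rationalisation is $\Ore(\linnelld(G)H)$ rather than $\linnelld(G)=\Ore(\linnelld(K)H)$ itself, which is harmless because for an element of $\ZZ G\cong(\ZZ K)H$ the coefficient of $h\in H$ is a signed sum of distinct elements of $K$ and is therefore nonzero in either skew field exactly when some element of the $\ZZ G$-support maps to $h$.

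The one step whose justification does not work as written is the combinatorial one you yourself single out. The claim that, for any linear functional, the prefixes of the cyclic word achieving the extreme value form a single cyclically consecutive block is false: a reduced closed path in $\RR\otimes_\ZZ H\cong\RR^2$ can return to a supporting line, and even to the extremal lattice point itself, several times in disjoint visits, so no block structure is available and your deduction that abelianisation cannot cancel a vertex collapses. What you actually need is precisely the statement that every vertex of $\conv\{\overbar{g_1},\dots,\overbar{g_s}\}$ (with the $g_j$ pairwise distinct by \cref{lemm:tgor:nontriviality}) carries a nonzero coefficient in $\overbar{\partial r/\partial x}\in\ZZ H$, equivalently that $P(\overbar{\partial r/\partial x})$ coincides with the convex hull of the images of the $\ZZ G$-support; this is exactly the identification of the Fox-derivative polytope with Friedl--Tillmann's combinatorially defined polytope established in \cite{FT2015}*{Proposition~3.5}, which you should use as a black box rather than re-derive. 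With that substitution the argument closes, and it then agrees in structure both with the Friedl--Lück proof being quoted and with the paper's proof of \cref{theo:tgor:invariance}.
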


By using agrarian torsion instead of universal $L^2$-torsion, we are able to remove the additional assumptions on $G$, thereby resolving~\cref{conj:tgor:invariance}:
\begin{theo}
\label{theo:tgor:invariance}
If $G$ is a group admitting a nice $(2, 1)$-presentation $\pi$, then $\MarkedPolytope_\pi\subset H_1(G;\RR)$ is an invariant of $G$ (up to translation). Moreover, if $G$ is torsion-free then $\Polytope_\pi=\Pag{D_r}(G)\in\PolyT(\ZZ^2)$ for any choice of an agrarian embedding $\ZZ G\hookrightarrow D$.
\end{theo}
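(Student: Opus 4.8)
The plan is to deduce both assertions from the explicit torsion computation in \cref{lemm:tgor:torsion}, treating the invariance of $\MarkedPolytope_\pi$ separately according to whether $G$ has torsion. \emph{Suppose first that $G$ is torsion-free.} Here I would establish the \emph{Moreover} clause directly, since invariance of $\Polytope_\pi$ is then automatic: the polytope $\Pag{D_r}(G)$ is manifestly built from $G$ alone. Fix an agrarian embedding $\ZZ G\hookrightarrow D$ (which exists by \cref{theo:tgor:agrarian}) with rationalisation $\ZZ G\to D_r$; write $H$ for the free part of the abelianisation of $G$ and $K=\ker(G\to H)$, so $D_r=\Ore(DH)$. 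A nice $(2,1)$-presentation $\pi=\langle x,y\mid r\rangle$ has $b_1(G_\pi)=2$, hence $r$ is non-empty with vanishing exponent sum in each generator; in particular both $x$ and $y$ occur in $r$ and $G_\pi$ is not free of rank two, so \cref{prop:tgor:polytope} applies and gives $\Pag{D_r}(G)=P([\partial r/\partial x])-P([y-1])$, while \cref{defi:tgor:polytope} reads $\Polytope_\pi=P(\overbar{\partial r/\partial x})-P(\overbar{y-1})$. It thus suffices to match, in $\PolyT(H)$, the Newton polytope of $y-1$ computed inside $DH$ with that of $\overbar{y-1}$ inside $\ZZ H$, and similarly for $\partial r/\partial x$. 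The first match is immediate: $b_1(G_\pi)=2$ forces $\overbar y$ to be a basis vector of $H$, so both polytopes are the segment from $0$ to $\overbar y$. For $\partial r/\partial x$, write $\partial r/\partial x=\sum_j\pm g_j$ in $\ZZ G$ with the $g_j$ pairwise distinct, using \cref{lemm:tgor:nontriviality}; under the identification $\ZZ G\cong(\ZZ K)H$ of \cref{exam:agrarian:twisted} write $g_j=\kappa_j\ast\overbar{g_j}$ with $\kappa_j\in K$. Then the $\kappa_j$ lying over a fixed $h\in H$ are pairwise distinct (as the $g_j$ are), so $\sum_{\overbar{g_j}=h}\pm\kappa_j$ is a non-zero element of $\ZZ K$ and remains non-zero in $D$ by injectivity of the agrarian embedding; hence the support of $\partial r/\partial x$ inside $DH$ is exactly $\{\overbar{g_j}\}$, and $P([\partial r/\partial x])=\conv\{\overbar{g_j}\}$. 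On the $\ZZ H$ side the support of $\overbar{\partial r/\partial x}$ is contained in $\{\overbar{g_j}\}$, and the analysis of extremal partial products of $r$ behind \cref{lemm:tgor:nontriviality} shows that no cancellation occurs at the vertices of the convex hull, so $P(\overbar{\partial r/\partial x})=\conv\{\overbar{g_j}\}$ too. This gives $\Polytope_\pi=\Pag{D_r}(G)$ in $\PolyT(\ZZ^2)$; in particular $\Polytope_\pi$ depends only on $G$, and not on $\pi$ or on the agrarian embedding.

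To upgrade this to a statement about the \emph{marked} polytope, and to prepare for the general case, I would transport markings along isomorphisms using that $\Sigma^1$ is an isomorphism invariant. Let $f\colon G_\pi\to G_{\pi'}$ be an isomorphism with induced isomorphism $\overbar f\colon H_\pi\to H_{\pi'}$ on free parts of abelianisations, and assume we already know $\Polytope_{\pi'}=\PolyT(\overbar f)(\Polytope_\pi)$ (the torsion-free case above, or the torsion case below). The dual isomorphism $H^1(G_{\pi'};\RR)\to H^1(G_\pi;\RR)$ carries $\Sigma^1(G_{\pi'})$ onto $\Sigma^1(G_\pi)$ and sends duals of faces of $\Polytope_{\pi'}$ to duals of the corresponding faces of $\Polytope_\pi$; since a vertex is marked precisely when one of its duals meets $\Sigma^1$ (and, by \cite[Theorem~1.1]{FT2015}, this is well defined at the level of vertices), it follows that $\overbar f$ matches marked vertices with marked vertices, i.e.\ $\MarkedPolytope_{\pi'}=\PolyT(\overbar f)(\MarkedPolytope_\pi)$ up to translation.

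It remains to handle $G$ with torsion, where \cref{prop:tgor:polytope} no longer applies. Here I would invoke the rigidity of $(2,1)$-presentations of one-relator groups with torsion: by a theorem of Pierce, any two nice $(2,1)$-presentations of $G$ have relators that agree after applying some automorphism of the free group on $\{x,y\}$ and cyclically permuting. Feeding this into \cref{prop:tgor:nielsen}, together with the Friedl--Tillmann observation that $\Polytope_\pi$ is unchanged up to translation under cyclic permutation of the relator, produces an $\overbar f$ with $\Polytope_{\pi'}=\PolyT(\overbar f)(\Polytope_\pi)$, after which the marking transports as in the previous paragraph. I expect the real obstacle of the whole argument to be the Newton-polytope comparison in the torsion-free case: abelianising $\partial r/\partial x$ from $\ZZ G$ to $\ZZ H$ can a priori create cancellation, and it is precisely the non-triviality input of \cref{lemm:tgor:nontriviality} (and the Friedl--Tillmann analysis of extremal partial products behind it) that rules this out at the vertices, while injectivity of the agrarian embedding disposes of the $D_r$-side. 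The torsion case, by contrast, is not a polytope computation at all and is imported wholesale from the structure theory of one-relator groups with torsion.
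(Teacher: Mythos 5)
Your proposal follows essentially the same route as the paper: the torsion-free case via \cref{prop:tgor:polytope} and the built-in invariance of the agrarian polytope, the torsion case via presentation rigidity combined with \cref{prop:tgor:nielsen}, and the passage from $\Polytope_\pi$ to $\MarkedPolytope_\pi$ via the fact that the marking is determined by $\Sigma^1(G)$ (\cite[Theorem~1.1]{FT2015}). Two points of comparison. First, in the torsion-free case you are more careful than the paper: the paper asserts that $\Polytope_\pi=\Pag{D_r}(G)$ ``follows directly from the definitions'', whereas you correctly isolate the only genuinely non-formal step, namely that the support of $\partial r/\partial x$ in $DH$ (which computes $\Pag{D_r}(G)$) could a priori be strictly larger than the support of $\overbar{\partial r/\partial x}$ in $\ZZ H$ (which computes $\Polytope_\pi$), since abelianising the $\ZZ K$-coefficients can create cancellation of signs; your resolution via \cref{lemm:tgor:nontriviality} on the $DH$ side and the Friedl--Tillmann extremal-prefix analysis on the $\ZZ H$ side is exactly the content the paper implicitly delegates to \cite[Proposition~3.5]{FT2015}. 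Second, in the torsion case you have a small but genuine omission: the rigidity theorem you invoke (the paper cites Pride) says that two such presentations are related by an automorphism of $F_2$ \emph{up to possibly replacing $r$ by $r^{-1}$}. Cyclic permutation is an inner automorphism and hence already covered by \cref{prop:tgor:nielsen}, but inversion is not of the form $r\mapsto f(r)$ and needs the separate one-line observation that $\partial r^{-1}/\partial x=-r^{-1}\,\partial r/\partial x$, so that the polytope only translates. Finally, note that in the torsion case the paper dispenses with the marking outright by observing that $\Sigma^1(G)=\emptyset$; your transport-of-$\Sigma^1$ argument also works but is not needed there.
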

\begin{proof}
We start by looking at the case of $G$ containing torsion. The solution to this case was pointed out to the authors by Alan Logan.

First note that in this case, the BNS invariant $\Sigma^1(G)$ is empty -- this follows immediately from Brown's algorithm~\cite{Brown1987}, or equivalently, from the construction of the marking of $\MarkedPolytope_\pi$. An alternative way to see this is to observe that the first $L^2$-Betti number of $G$ is not zero, see~\cite{DicksLinnell2007}.

Since $\Sigma^1(G) = \emptyset$, we need only worry about $\Polytope_\pi$. If one alters the presentation $\pi$ by applying an automorphism $f$ of the free group $F_2 = \langle x, y\rangle$ to the relator $r$, the polytope remains invariant in the sense of~\cref{conj:tgor:invariance} by~\cref{prop:tgor:nielsen}. But it was shown by Pride~\cite{Pride1977} that when $G$ contains torsion, every two two-generator one-relator presentations of $G$ are related by an automorphism of $F_2$, up to possibly replacing the relator $r$ in one of the presentations by $r^{-1}$. This last operation does not alter the class of the polytope since, as a consequence of the product rule for Fox derivatives, we get $\partial r^{-1}/\partial x=-r^{-1} \partial r/\partial x$, and thus the polytopes associated to $\partial r^{-1}/\partial x$ and $\partial r/\partial x$ agree up to translation.

\medskip
Now suppose that $G$ is torsion-free. Then the equality $\Polytope_\pi=\Pag{D_r}(G)$ follows directly from the definitions of $\Polytope_\pi$ and $\Pag{D_r}(G)$ by the computation done in \cref{prop:tgor:polytope}, and the agrarian polytope is an invariant of the group by construction.
We conclude from~\cite[Theorem~1.1]{FT2015} that once $\Polytope_\pi$ is known to be an invariant of $G$, the same is true for the marked version $\MarkedPolytope_\pi$ since marked vertices are determined by the BNS invariant $\Sigma^1(G)$ of the group $G$.
\end{proof}

As a consequence of the equality $\Polytope_\pi=\Pag{D_r}(G_\pi)$ for a $(2,1)$-presentation $\pi$ giving rise to a torsion-free group we conclude that $\Pag{D_r}(G_\pi)$ is actually independent of the choice of agrarian embedding.

\smallskip

Friedl and Tillmann claim in~\cite[Proposition~8.1]{FT2015} and the subsequent two paragraphs that they can associate a single polytope $\Polytope_\pi$ to any $(2, 1)$-presentation $\pi=\langle x, y\mid r\rangle$ where $r$ is non-trivial and cyclically reduced, even without assuming the presentation to be nice.
If $b_1(G_\pi)=1$, $x$ represents a generator of the free part of the abelianisation of $G_\pi$ and $y$ represents the trivial element therein, they call such a presentation \emph{simple}.
For a simple presentation $\pi$, the polytope $\Polytope_\pi$ is computed by the formula involving the Fox derivative of $r$ with respect to $x$ from \cref{defi:tgor:polytope}, and therefore agrees with $\Pag{D_r}(G_\pi)$ if $G_\pi$ is torsion-free.

The statement and proof of~\cite[Proposition~8.1]{FT2015} are not fully correct, as the following example shows:
\begin{exam}
\label{exam:tgor:virtual}
Consider the simple $(2, 1)$-presentation $\pi=\langle x, y\mid y^2\rangle$.
Then the associated polytope $\Polytope_\pi$ is only a virtual polytope, more specifically the additive inverse of the class of a unit interval in $\PolyT (\ZZ) = \PolyT (\langle \overbar x\rangle)$.
\end{exam}

In the proof of~\cite[Proposition~8.1]{FT2015}, the assumption that the relator $r$ is either of the form $x^{m_1}y^{n_1}\cdots x^{m_k}y^{m_k}$ or $y^{n_1}x^{m_1}\cdots y^{m_k}x^{m_k}$ for \emph{non-zero} integers $m_1, n_1,\ldots, m_k, n_k$ is incorrect; in our example $k=1$, $m_1=0$ and $n_1=2$.

In order to fix the statement and the proof of the proposition, it is necessary to consider the case of group presentations $\langle x, y\mid y^n\rangle$, $n\in\ZZ, n\neq 0$ separately.
These presentations are the only simple ones for which any of the $m_i$ is zero.
In this case, the polytope $P(\overbar{\partial r/\partial x})$ is an interval of length $D=0$, which means that $\Polytope_\pi$ is the additive inverse of a unit interval in $\PolyT(\ZZ)$.

With this additional case considered, we now observe that the correct result of \cite[Proposition~8.1]{FT2015} should be that $\Polytope_\pi$ is a single polytope for a simple $(2, 1)$-presentation $\pi$ if and only if $G_\pi$ is not isomorpic to $\ZZ\ast\ZZ/n\ZZ$ for any $n\in\ZZ$.
The polytope $\Polytope_\pi$ can be turned into a marked polytope $\MarkedPolytope_\pi$ in the Friedl--Tillmann sense if and only if $G_\pi$ is neither isomorphic to $\ZZ\ast\ZZ/n\ZZ$ nor to $B(\pm 1, n)\coloneqq {\langle x, y\mid xy^{\pm 1}x^{-1} y^{-n}\rangle}$ for $n\in\ZZ$.

\smallskip

The problem with the Baumslag--Solitar groups $B(\pm 1, n)$ is that the resulting polytope is a singleton lying in a $1$-dimensional $\RR$-vector space. Since $\Sigma^1(B(\pm 1, n))$ is non-trivial and proper in $H^1(B(\pm 1, n);\RR)$, there is no marking of $\Polytope_\pi$ in the Friedl--Tillmann sense which would correctly control the BNS invariant. Our notion of marking of vertices of a polytope circumvents this problem, and allows for a definition of $\MarkedPolytope_\pi$ also for these groups by marking one of the duals of the only face and not marking the other.

The groups $\ZZ\ast\ZZ/n\ZZ$ arising from the presentations $\langle x, y\mid y^n\rangle$ all admit a virtual polytope which is the additive inverse of the unit interval in $\PolyT(\ZZ\langle \overbar x\rangle)$.
The notion of a marked polytope readily extends to additive inverses of single polytopes by describing a marking for the single polytope.
Since $\ZZ\ast \ZZ/n\ZZ$ is an ascending HNN extension along any of the two possible epimorphisms to $\ZZ$ if $n=\pm 1$ and contains torsion otherwise, the polytope will have all duals of its only face marked if $n=\pm 1$ and not marked if $n\neq \pm 1$.

\section{Polytope thickness and splitting complexity}

We continue with the notation of the  previous section. Our aim now is to show that the thickness of $\Polytope_\pi$ controls the minimal complexity of certain expressions of $G$ as an HNN extension over a finitely generated group. Before we state the precise connection, we need to introduce the following concept:
\begin{defi}[{\cite[Section 5.1]{FLT2016}}]
Let $\Gamma$ be a finitely presented group and let $\phi\colon\Gamma\to\ZZ$ be an epimorphism. A \emph{splitting of $(\Gamma, \phi)$} is a presentation of $\Gamma$ as an HNN extension with induced character $\phi$ and finitely generated base and associated groups.
\end{defi}

It is proved in~\cite[Theorem~A]{BS1978} that any pair $(\Gamma, \phi)$ admits a splitting. Hence we can define the \emph{splitting complexity of $(\Gamma,\phi)$} as
\[c(\Gamma,\phi)\coloneqq \min\{\rk(B) \mid (\Gamma,\phi)\text{ splits with associated group $B$}\},\]
where $\rk(B)$ denotes the minimal number of generators of $B$. We also define the \emph{free splitting complexity of $(\Gamma, \phi)$} as
\[c_f(\Gamma,\phi)\coloneqq \min\{\rk(F) \mid (\Gamma,\phi)\text{ splits with associated free group $F$}\},\]
which may be infinite. We always have $c(\Gamma,\phi)\leq c_f(\Gamma,\phi)$.

Friedl and Tillmann observed the following connection between the thickness of $\Polytope_\pi$ and the (free) splitting complexity of $G$:
\begin{theo}[{\cite[Theorem~7.2]{FT2015}}]
Let $G$ be a residually \{torsion-free elementary amenable\} group admitting a nice $(2, 1)$-presentation $\pi$. Then for any epimorphism $\phi\colon G\to\ZZ$ we have
\[c(G,\phi)=c_f(G, \phi)=\thickness_\phi(\Polytope_\pi) + 1.\]
\end{theo}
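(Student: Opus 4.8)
The plan is to prove \cref{theo:tgor:thickness} by closing the cycle of inequalities
\[
\thickness_\phi(\Pag{D_r}(G)) + 1 \;\le\; c(G,\phi) \;\le\; c_f(G,\phi) \;\le\; \thickness_\phi(\Pag{D_r}(G)) + 1 ,
\]
the middle one being immediate from the definitions. For the rightmost inequality I would run Brown's algorithm, exactly as Friedl--Tillmann do, on a nice $(2,1)$-presentation $\pi$ of $G$ (or, when $b_1(G)=1$, on a simple presentation): it produces a splitting of $(G,\phi)$ with \emph{free} associated group of rank $\thickness_\phi(\Polytope_\pi)+1$, and this step uses no hypothesis on $G$; since $\Polytope_\pi=\Pag{D_r}(G)$ by \cref{theo:tgor:invariance}, this gives $c_f(G,\phi)\le\thickness_\phi(\Pag{D_r}(G))+1<\infty$. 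The group $G=\ZZ$ is disposed of by hand ($c=c_f=0$, and $\Pag{D_r}(\ZZ)$ is the additive inverse of a unit interval, so the claimed identity reads $0=0$). Thus the substantial content — and where our machinery replaces the residual hypothesis of Friedl--Tillmann — is the leftmost inequality.

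So assume $G\neq\ZZ$ and fix an arbitrary splitting $G=A*_\alpha$ with $\alpha\colon B\xrightarrow{\ \cong\ }C$ an isomorphism of subgroups of the finitely generated group $A$, with $B$ finitely generated of rank $k$, and with induced character $\phi$; it suffices to prove $\thickness_\phi(\Pag{D_r}(G))\le k-1$, since minimising $k$ over all such splittings then yields $\thickness_\phi(\Pag{D_r}(G))+1\le c(G,\phi)$. The first move is to rewrite the left-hand side: by \cref{theo:twisted:thickness} it equals $-\chiag[D_r](EG_\pi;\phi)$, and since the top chain module of the presentation $2$-complex $EG_\pi$ is free of rank one with injective differential over the twisted Laurent ring appearing in the proof of \cref{theo:twisted:thickness}, we have $\bag[D_r]_2(EG_\pi;\phi)=0$; moreover $\bag[D_r]_0(EG_\pi;\phi)=0$ since $K_\phi\coloneqq\ker\phi$ is infinite. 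Hence, by (the proof of) \cref{lemm:twisted:kernel},
\[
\thickness_\phi(\Pag{D_r}(G)) \;=\; \bag[D_r]_1(EG_\pi;\phi) \;=\; \bag[D_r]_1(\res_G^{K_\phi} EG_\pi).
\]

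The key step is to extract from the splitting a free $\ZZ G$-resolution of $\ZZ$ reflecting the HNN structure: Bass--Serre theory gives that $C_*(EG_\pi)$ is $\ZZ G$-chain homotopy equivalent to the algebraic mapping cone of
\[
\ZZ G\otimes_{\ZZ B} C_*(EB) \xrightarrow{\ \iota_1 - t\cdot\iota_2\ } \ZZ G\otimes_{\ZZ A} C_*(EA),
\]
where $\iota_1$ is induced by $B\hookrightarrow A$, $\iota_2$ by $B\xrightarrow{\alpha}C\hookrightarrow A$, and $t$ is the stable letter. Tensoring with $\mathcal R\coloneqq\Ore(DK_{\overbar\phi})[t,t^{-1}]_\phi$ (so $\Ore(\mathcal R)=D_r$, and $\mathcal R$ is a non-commutative PID over which the resulting homology is torsion, as in the proof of \cref{theo:twisted:thickness}), the long exact Mayer--Vietoris (Wang) sequence collapses — torsion modules map trivially into free $\mathcal R$-modules — into short exact sequences $0\to W_p\to V_p\to U_p\to 0$, with $U_p=H_p(\mathcal R\otimes_{\ZZ G}C_*(EG_\pi))$, $V_p=H_p(\mathcal R\otimes_{\ZZ A}C_*(EA))$ and $W_p=H_p(\mathcal R\otimes_{\ZZ B}C_*(EB))$. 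Now: (i) because $A$ and $B$ lie in $K_\phi$, the modules $V_p$ and $W_p$ are free over $\mathcal R$ with $t$-homogeneous degree-$0$ bases, in which the maps $W_p\to V_p$ are represented by matrices of $t$-degree at most $1$ — the only $t$ enters through the cone map, a sum of a degree-$0$ and a degree-$1$ part; (ii) $U_0=0$ and $U_2=0$ (the differential out of $\mathcal R\otimes_{\ZZ G}C_2(EG_\pi)$ is injective), so $\thickness_\phi(\Pag{D_r}(G))=\dim_{D_r}U_1$ is the $t$-degree of a Dieudonné determinant of the square matrix representing $W_1\to V_1$, hence at most the common rank of $W_1$ and $V_1$; and (iii) that common rank equals $\dim_{\Ore(DK_{\overbar\phi})}H_1(\Ore(DK_{\overbar\phi})\otimes_{\ZZ B}C_*(EB))$ (as $W_1$ is induced up along $\Ore(DK_{\overbar\phi})\hookrightarrow\mathcal R$), which is a quotient of the rank-$(k-1)$ kernel of the map $\Ore(DK_{\overbar\phi})^k\to\Ore(DK_{\overbar\phi})$ dual to a $k$-element generating set of the infinite group $B$. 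Combining (i)--(iii) gives $\thickness_\phi(\Pag{D_r}(G))\le k-1$.

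The step I expect to be the main obstacle is the matrix/degree bookkeeping in the previous paragraph when the base group $A$ is not finitely presented, so that $C_*(EA)$ is of infinite type: one handles this by noting that the extra generators and relators of $A$ lie in $K_\phi$ and so contribute only $t$-homogeneous degree-$0$ blocks, which do not affect the degree count (alternatively, replace $EA$ by a possibly infinite $2$-dimensional partial model and work with the resulting row- and column-finite matrices). A closely related subtlety, genuinely responsible for the sharpness of the bound, is pinning down why the Dieudonné determinant in (ii) has $t$-degree at most $k-1$ rather than $k$: this traces back to the fact that $\partial(tb_it^{-1}\alpha(b_i)^{-1})/\partial t=1-tb_it^{-1}$ is $t$-homogeneous of degree $0$ because $b_i\in K_\phi$, i.e. the column coming from the stable letter contributes degree $0$ while the remaining $k-1$ columns contribute degree at most $1$ each; it is exactly this ``$-1$'' that forces $c(G,\phi)$, $c_f(G,\phi)$ and $\thickness_\phi(\Pag{D_r}(G))+1$ to coincide rather than merely agree up to one.
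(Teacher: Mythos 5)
Your proposal is correct in outline and establishes the statement (via the paper's more general \cref{theo:tgor:thickness}, of which the quoted theorem is the special case where residual \{torsion-free elementary amenable\}-ness supplies torsion-freeness and \cref{theo:tgor:invariance} identifies $\Polytope_\pi$ with $\Pag{D_r}(G)$), but for the essential inequality $c(G,\phi)\geq \thickness_\phi(\Pag{D_r}(G))+1$ it takes a genuinely different route. The paper works with an explicit presentation of $G$ extracted from the splitting (generators of $A$ plus the stable letter, relations of $A$ plus the $c$ HNN relations), forms the associated Cayley $2$-complex, writes down the Fox-derivative matrix, observes that only the $c$ HNN rows carry a $t$-linear part, and applies Harvey's bound (\cref{prop:tgor:harvey}) to a quotient complex; the ``$-1$'' there comes from subtracting $\dim_{D_r} D_r[t,t^{-1}]_\phi/(t-1)=1$ via rank--nullity. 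You instead invoke the Bass--Serre mapping-cone model and the resulting Wang sequence, which is cleaner conceptually (it avoids choosing a presentation of $A$ and handles the infinite-type issue automatically, since $V_1$ and $W_1$ are induced up from finite-dimensional skew-field homology), and in your setup the ``$-1$'' has a different source, namely $\dim H_1(B;\Ore(DK_{\overbar\phi}))\leq k-1$ for the $k$-generated nontrivial group $B$. Both arguments ultimately rest on the same degree count for a matrix of the form $M_0+tM_1$ over $\Ore(DK_{\overbar\phi})$. Three points need tightening. First, your step (ii) asserts that the degree of the Dieudonn\'e determinant of a $\rho\times\rho$ matrix of $t$-degree $\leq 1$ is at most $\rho$; over a skew field this is not immediate from row reduction, and the clean way to get it is exactly \cref{prop:tgor:harvey} applied to $M_0+tM_1$ (using that $U_1=\coker(W_1\to V_1)$ is entirely torsion), which also sharpens the bound to $\rk M_1$. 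Second, the dimensions of the torsion modules $U_p$ must be taken over $\Ore(DK_{\overbar\phi})$, not over $D_r=\Ore(\mathcal R)$ (over which they vanish); the identification with $\bag[D_r]_1(EG_\pi;\phi)$ then goes through the flatness argument in the proof of \cref{theo:twisted:thickness}. Third, your closing paragraph mislocates the origin of the ``$-1$'' in your own argument: the column $\partial(tx_jt^{-1}\mu(x_j)^{-1})/\partial t$ belongs to the presentation-complex picture, whereas in the Mayer--Vietoris picture the sharpness comes from $W_1$ being a subquotient of the $(k-1)$-dimensional kernel of $\Ore(DK_{\overbar\phi})^k\to\Ore(DK_{\overbar\phi})$; keeping the two bookkeeping schemes separate is advisable, but either one, carried through consistently, yields the bound $\leq k-1$.
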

Note that every residually \{torsion-free elementary amenable\} group must itself be torsion-free. Friedl, Lück, and Tillmann then noted in~\cite[Theorem~5.2]{FLT2016} that the original proof could be adapted to the setting of~\cite{FL2016}, thereby giving the same formula for groups satisfying the Atiyah conjecture.

We will now present a common generalisation of these results. For this, we require the following strengthened form of a proposition of Harvey, which is evident from the last sentence of its original proof:
\begin{prop}[{\cite[Proposition~9.1]{Harvey2005}}]
\label{prop:tgor:harvey}
Let $D$ be a skew field and $D[t,t^{-1}]$ a twisted Laurent polynomial ring with coefficients in $D$. Let $M=A+tB$ where $A$ and $B$ are two $l \times m$ matrices over $D$. Then the map $r_M\colon D[t,t^{-1}]^l\to D[t,t^{-1}]^m$ given by right multiplication by $M$ satisfies
\[\dim_D \tors(\coker(r_M))\leq \rk_D B.\]
\end{prop}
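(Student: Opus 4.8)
The plan is to reduce the matrix $M = A + tB$ to a canonical form over the twisted Laurent polynomial ring $D[t,t^{-1}]$ via left and right multiplication by invertible matrices, and to track how the torsion part of the cokernel changes, while simultaneously bounding everything in terms of $\rk_D B$. Since $D[t,t^{-1}]$ is a (non-commutative) principal ideal domain — indeed a Euclidean domain with respect to the degree function on Laurent polynomials — every finitely generated module over it decomposes as a free part plus a torsion part, and the torsion part is a direct sum of cyclic modules $D[t,t^{-1}]/(\lambda_i)$ for non-zero $\lambda_i$. Right multiplication $r_M$ on $D[t,t^{-1}]^l \to D[t,t^{-1}]^m$ therefore has $\coker(r_M) \cong D[t,t^{-1}]^{m-\rho} \oplus \bigoplus_i D[t,t^{-1}]/(\lambda_i)$, where $\rho$ is the rank of $M$ over $\Ore(D[t,t^{-1}])$, and $\dim_D \tors(\coker(r_M)) = \sum_i \deg(\lambda_i)$. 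So the task is to bound $\sum_i \deg(\lambda_i)$ by $\rk_D B$.

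First I would bring $M$ into Jacobson normal form: there exist invertible matrices $P \in \GL_l(D[t,t^{-1}])$ and $Q \in \GL_m(D[t,t^{-1}])$ such that $PMQ$ is diagonal with entries $\delta_1, \dots, \delta_\rho, 0, \dots, 0$, and the $\delta_i$ are (up to units) the invariant factors, so the non-unit ones among them are exactly the $\lambda_i$ above. The key numerical invariant to control is then $\sum_i \deg(\delta_i) = \deg_t(\det_{\Ore} \text{ of the } \rho\times\rho \text{ nonzero block})$, or more robustly: the difference between the highest and lowest $t$-degree appearing across a maximal non-vanishing $\rho\times\rho$ minor of $M$. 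Here is where $M = A + tB$ enters: every entry of $M$ is a polynomial of $t$-degree at most $1$ with lowest degree at least $0$, so any $\rho \times \rho$ minor of $M$ is a Laurent polynomial supported in degrees $0$ through $\rho$. More precisely, expanding such a minor, the top $t$-degree term can only receive contributions from columns where we pick the $tB$-part, and the number of such columns that can simultaneously contribute a non-zero top-degree term is at most $\rk_D B$ — because the $t^\rho$-coefficient of a minor using all $tB$-columns is (up to sign and the Dieudonné formalism) a minor of $B$, and $B$ has rank $\rk_D B$, so any minor of $B$ of size exceeding $\rk_D B$ vanishes. This bounds the spread of $t$-degrees in the invariant factors, hence $\sum_i \deg(\lambda_i) \leq \rk_D B$.

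The main obstacle is making the degree bookkeeping rigorous in the non-commutative setting: one cannot literally expand determinants by the Leibniz formula, so I would instead argue via the Dieudonné determinant and the behaviour of the degree valuation $v\colon \Ore(D[t,t^{-1}])^\times \to \ZZ$ (which extends the $t$-adic valuation and descends to $\Dab[\Ore(D[t,t^{-1}])]$) under the row and column operations used to produce Jacobson normal form; alternatively, one performs the reduction one column at a time, at each step either removing a column that does not increase the relevant degree or identifying a column contributing a $tB$-part, and counts the latter. A clean way to package this is: perform elementary column operations over $D[[t]][t^{-1}]$-style leading-term arguments to replace $M = A + tB$ by $A' + tB'$ with $B'$ having at most $\rk_D B$ nonzero rows (using column operations to clear the column space of $B$ down to a basis), after which only $\rk_D B$ of the diagonal invariant factors can have positive degree and each such has degree contribution absorbed into the count; the total is then visibly at most $\rk_D B$. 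I expect the last sentence of Harvey's original proof — which the statement says already gives the strengthened bound — to carry out precisely this leading-coefficient reduction, so I would follow that line and simply supply the degree-counting details.
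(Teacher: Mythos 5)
You should first be aware that the paper does not prove this proposition: it is imported verbatim from Harvey's article, with the remark that the strengthened bound $\rk_D B$ is already contained in the last sentence of Harvey's original proof. So there is no in-paper argument to match, and your proposal is judged as a proof from scratch. Your overall strategy --- the structure theory of finitely generated modules over the non-commutative PID $D[t,t^{-1}]$, so that $\dim_D\tors(\coker(r_M))=\sum_i\deg(\lambda_i)$, followed by a degree count in which $\rk_D B$ controls the top $t$-degree --- is sound and is essentially the standard route.

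The gap is in your final step, where you claim the bound becomes ``visible'' after normalising $B$. First, a slip: column operations over $D$ reduce $B$ to a matrix with at most $\rk_D B$ nonzero \emph{columns}, not rows (row operations over $D$ give rows); either works. More seriously, after this reduction the bound is \emph{not} visible: one is left with a block with entries in $D+tD$ and only $r=\rk_D B$ ``degree-one'' columns, and individual invariant factors of such a matrix can have degree strictly greater than $1$ --- for instance $\left(\begin{smallmatrix} t & 1\\ -1 & t\end{smallmatrix}\right)$ has invariant factors $1$ and $t^2+1$. What is actually needed is that the \emph{total} degree of the invariant factors of a rank-$\rho$ matrix with entries in $D+tD$ is at most $\rho$; this is precisely the Dieudonn\'e-determinant estimate $v^+(\det_D S)\le\sum_i\max_j v^+(S_{ij})$ and $v^-(\det_D S)\ge\sum_i\min_j v^-(S_{ij})$ for the two degree valuations on $\Ore(D[t,t^{-1}])^\times$ (which are homomorphisms to $\ZZ$ and hence descend to the abelianised units) --- the very step you flag as ``the main obstacle'' and then defer. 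Relatedly, identifying $\sum_i\deg(\delta_i)$ with the degree of a maximal invertible submatrix of the non-square $M$ itself requires an argument; the paper handles the analogous point in the proof of \cref{theo:twisted:thickness} by performing the elementary operations on the whole differential and using that the chosen submatrix has full rank. The plan is therefore recoverable if you commit to the valuation argument and carry it through, but as written the decisive inequality is asserted rather than proved. Note also that the argument the paper points to is determinant-free: normalise $B$ over $D$ to $\left(\begin{smallmatrix} I_r & 0\\ 0 & 0\end{smallmatrix}\right)$ and observe that the torsion of the cokernel is a subquotient of the cokernel of an $r\times r$ block of the form $tI_r+C$ with $C$ over $D$, which has $D$-dimension at most $r$ by a direct reduction of representatives.
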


We are now in a position to improve upon both~\cite[Theorem~7.2]{FT2015} and~\cite[Theorem~5.2]{FLT2016} by recasting the proof of~\cite[Theorem~7.2]{FT2015} in the agrarian world.
In the statement of the following theorem, the agrarian polytope $\Pag{D_r}(G)$ can be replaced by $\Polytope_\pi$ for any nice or simple $(2, 1)$-presentation $\pi$ of $G$ in the sense of \cite[Section~8.1]{FT2015}.
\begin{theo}
\label{theo:tgor:thickness}
Let $G$ be a torsion-free two-generator one-relator group other than the free group on two generators. Then for every epimorphism $\phi\colon G\to\ZZ$ we have
\[c(G,\phi)=c_f(G, \phi)=\thickness_\phi(\Pag{D_r}(G)) + 1.\]
\end{theo}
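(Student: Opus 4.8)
The plan is to prove the two equalities by a squeeze argument, establishing $c(G,\phi) \geq \thickness_\phi(\Pag{D_r}(G)) + 1$ and $c_f(G,\phi) \leq \thickness_\phi(\Pag{D_r}(G)) + 1$, and then using the trivial inequality $c(G,\phi) \leq c_f(G,\phi)$ to close the loop. By \cref{theo:twisted:thickness} we may work with $-\chiag[D_r](G;\phi)$ in place of $\thickness_\phi(\Pag{D_r}(G))$; concretely, using \cref{prop:tgor:polytope} together with \cref{lemm:polytope:one_variable}, the thickness equals the degree of a single-variable twisted Laurent polynomial expression built from the Fox derivative $\partial r/\partial x$, evaluated over $\Ore(DK_{\overbar\phi})[t,t^{-1}]_\phi$. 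Since $\Sigma^1(G)$ and its negative together with the complement govern whether $G$ or $G^{\mathrm{op}}$-flavoured splittings are ascending, I first reduce to the case where $\phi \notin \Sigma^1(G) \cup -\Sigma^1(G)$: if $\phi$ (or $-\phi$) lies in the BNS invariant, then $G$ is an ascending HNN extension of a finitely generated (indeed free, by the one-relator hypothesis and a Magnus–Karrass–Solitar argument as in \cite{FT2015}) group, one checks directly that the base group realises both sides, and the marking of $\Polytope_\pi$ recovers the thickness as in~\cite[Theorem~7.2]{FT2015}.

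For the lower bound, suppose $(G,\phi)$ splits as an HNN extension with finitely generated base group $A$ and associated subgroup $B$ of rank $c(G,\phi)$. The Mayer–Vietoris sequence of the HNN extension gives a presentation of the cellular $\ZZ G$-chain complex of $EG$, after restriction along $\phi$, in which the relevant differential over $D_r[t,t^{-1}]_\phi$ (equivalently over $\Ore(DK_{\overbar\phi})[t,t^{-1}]_\phi$, using that $D_r$ is flat over it as in the proof of \cref{theo:twisted:thickness}) has the shape $M = A' + tB'$ where the number of columns of $B'$ is controlled by $\rk(B)$. Applying the strengthened Harvey proposition \cref{prop:tgor:harvey} to this matrix bounds $\dim_D \tors(\coker(r_M))$—which by the computation in the proof of \cref{theo:twisted:thickness} equals the degree sum $\sum_i \deg(\lambda_{p,i})$ contributing to $\thickness_\phi(\Pag{D_r}(G))$—from above by $\rk_D B' \leq \rk(B) = c(G,\phi)$. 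Accounting for the single stable letter $t$ (the ``$+1$'') and the one-relator structure, this yields $\thickness_\phi(\Pag{D_r}(G)) + 1 \leq c(G,\phi)$; here it is essential that $G$ is not free on two generators so that $EG$ is genuinely $D_r$-acyclic and $\partial r/\partial x \neq 0$ by \cref{lemm:tgor:nontriviality}.

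For the upper bound I would follow the Brown-algorithm construction used in \cite{FT2015}: starting from a nice (or simple) $(2,1)$-presentation $\langle x,y\mid r\rangle$ with $\phi$ suitably normalised so that $\phi(x) > 0$, one rewrites $r$ and explicitly exhibits $G$ as an HNN extension whose associated subgroup is a free group of rank exactly $\thickness_\phi(\Polytope_\pi) + 1$; the freeness comes from the fact that the base group is a subgroup of a one-relator group in which the relator, after the rewriting, no longer involves the stable letter, so the associated subgroup is a free factor-like subgroup of a free group. This realises $c_f(G,\phi) \leq \thickness_\phi(\Pag{D_r}(G)) + 1$. Combining the two bounds with $c \leq c_f$ forces equality throughout.

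The main obstacle I anticipate is the lower-bound step: one must produce, from an \emph{arbitrary} splitting of $(G,\phi)$ (not assumed to come from the given presentation), a matrix of the precise form $A' + tB'$ with $B'$ of the right size, and correctly match $\dim_D\tors(\coker(r_M))$ with the degree quantity appearing in \cref{theo:twisted:thickness}. This requires care with the chain-level algebra—passing between the HNN-extension chain complex, the restricted complex $\res_G^{K_\phi}C_*(EG)$, and the base-changed complex over $\Ore(DK_{\overbar\phi})[t,t^{-1}]_\phi$—and with the bookkeeping of the $+1$ coming from the stable letter. The torsion-with-versus-without case split is routine by \cref{prop:tgor:harvey} and the earlier reductions, and the group-theoretic input (base group of a one-relator HNN splitting is free) is classical, but verifying that Harvey's bound is attained—rather than merely an inequality—hinges on the upper-bound construction matching it exactly.
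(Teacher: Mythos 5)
Your overall strategy coincides with the paper's: the upper bound $c_f(G,\phi)\leq\thickness_\phi(\Pag{D_r}(G))+1$ is imported from the explicit HNN constructions of Friedl--Tillmann, the lower bound is obtained by converting the thickness into the twisted Euler characteristic via \cref{theo:twisted:thickness} and then bounding the relevant torsion module using \cref{prop:tgor:harvey} applied to a matrix of the form $A'+tB'$ extracted from an \emph{arbitrary} splitting, and the two bounds are closed up with $c\leq c_f$. However, the step you yourself flag as the ``main obstacle'' is precisely the substantive content of the proof, and you have not carried it out. The paper resolves it concretely: given a splitting with finitely generated base group $A=\langle g_1,\dots,g_k\mid r_1,r_2,\dots\rangle$ and associated group $B$ of rank $c$, one writes down the induced presentation $\langle g_1,\dots,g_k,t\mid r_1,r_2,\dots,\mu(x_j)^{-1}tx_jt^{-1}\rangle$ of $G$, observes that its Cayley $2$-complex $Y$ becomes a model for $EG$ after attaching cells only in dimensions $\geq 3$ (so $H_0$ and $H_1$ of the twisted complex agree with those of the presentation complex of the one-relator presentation), and computes the Fox derivatives: $\partial/\partial t$ of every relator lies in $\ZZ A$, the derivatives of the $r_i$ with respect to the $g_i$ lie in $\ZZ A$, and the derivatives of the HNN relators with respect to the $g_i$ lie in $\ZZ A+t\cdot\ZZ A$. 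Hence the block $M_1$ has only $c$ rows with a non-trivial $t$-part, giving $\rk_{D}B'\leq c$ (note: in the paper's conventions it is the number of such \emph{rows}, not columns, that is controlled by $\rk(B)$), while the stable letter contributes $\dim_{D_r}D_r[t,t^{-1}]_\phi/(t-1)=1$ to $H_0$; rank--nullity then yields $\thickness_\phi\leq c-1$. Without this computation your lower bound is a plan, not a proof.

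Two further remarks. First, your opening reduction to $\phi\notin\Sigma^1(G)\cup-\Sigma^1(G)$ is absent from the paper, is unnecessary (the Harvey argument treats all $\phi$ uniformly), and as stated would not suffice in the excluded case: even when $\phi\in\Sigma^1(G)$ a minimal-complexity splitting need not be the ascending one, so ``the base group realises both sides'' does not by itself give the lower bound there. Second, for the upper bound the paper does not simply cite~\cite[Proposition~7.3]{FT2015} wholesale: it also checks that the argument of~\cite[Lemma~7.5]{FT2015} extends to simple presentations and treats the presentations $\langle x,y\mid x^n\rangle$ separately, a case analysis your sketch omits.
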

\begin{proof}
The inequality $c_f(G,\phi)\leq\thickness_\phi(\Polytope_\pi)+1$ is proved in~\cite[Proposition~7.3]{FT2015} for all nice $(2,1)$-presentations.
The proof of~\cite[Lemma~7.5]{FT2015} also applies to any simple $(2,1)$-presentation $\langle x, y\mid r\rangle$ for which $r$ is not a word in just one of the generators and its inverse, since then the numbers $m_1$ and $n_1$ appearing in the proof are non-zero.
Any other simple $(2,1)$-presentation $\pi$ is, up to renaming the generators, of the form $\langle x, y\mid x^n\rangle$ for $n\in\NN, n\neq 0$, and there are only two different epimorphisms $G_\pi\to\ZZ$.
It is then easy to see right from the definitions that the splitting complexity and thickness of $\Polytope_\pi$ with respect to any of the two epimorphisms are given by $0$ and $-1$, respectively.

Since every torsion-free two-generator one-relator group $G$ that is not the free group on two generators admits either a nice or a simple presentation $\pi$ and $\Pag{D_r}(G)=\Polytope_\pi$ by \cref{prop:tgor:polytope}, we are left to show that $c(G,\phi)\geq\thickness_\phi(\Pag{D_r}(G))+1$. By \cref{theo:twisted:thickness}, this is further reduced to the following statement about the $\phi$-twisted $D_r$-agrarian Euler characteristic of $G$:
\[c(G,\phi)-1 \geq - \chiag[D_r](G;\phi).\]

Recall from the proof of \cref{lemm:tgor:torsion} that the Cayley 2-complex $X$ associated to a $(2, 1)$-presentation of $G$ serves as a model of $EG$ and that the application of \cref{theo:twisted:thickness} is justified since we constructed a non-degenerate $\tau$-chain. By \cref{lemm:twisted:kernel,lemm:twisted:laurent_computation}, we can thus compute $\chiag[D_r](G;\phi)$ from the Betti numbers of the complex $D_r[t, t^{-1}]_\phi\otimes C_*(X)$:
\[D_r[t, t^{-1}]_\phi \xrightarrow{\begin{pmatrix}\frac{\partial r}{\partial x} & \frac{\partial r}{\partial y}\end{pmatrix}}  D_r[t, t^{-1}]_\phi ^2\xrightarrow{\begin{pmatrix}x-1 \\ y-1 \end{pmatrix}} D_r[t, t^{-1}]_\phi.\]
Since $D_r[t, t^{-1}]_\phi$ is a (non-commutative) principal ideal domain, the kernel of the differential originating from degree 2 is free. It is also seen to be torsion by \cref{lemm:twisted:laurent_computation} and hence $\dim_{D_r} H_p(D_r[t, t^{-1}]_\phi\otimes C_*(X))=0$ for $p\geq 2$.

We let $c=c(G,\phi)$ and choose a splitting
\[G=\langle A, t\mid \mu(B)=tBt^{-1}\rangle\]
of $(G,\phi)$ with associated group $B$ generated by $x_1,\dots,x_c$; in particular $A\subseteq\ker(\phi)$ is finitely generated. We pick a presentation $A=\langle g_1,\dots, g_k\mid r_1, r_2,\dots\rangle$, which is possible since $G$ and thus $A$ are countable. Denote the number of relations in this presentation by $l\in\ZZ_{\geq 0}\cup\{\infty\}$. The splitting of $(G,\phi)$ then gives the following alternative presentation of $G$:
\[G=\langle g_1, \dots, g_k, t\mid r_1,r_2,\dots, \mu(x_1)^{-1}tx_1t^{-1},\dots, \mu(x_c)^{-1}tx_ct^{-1}\rangle.\]
Note that the words $r_i$, $x_j$ and $\mu(x_j)$ are words in the generators $g_i$ of $A$.
Denote by $Y$ the Cayley 2-complex associated to this presentation. By construction, $\pi_1(Y/G)=\pi_1(X/G)$, and thus $Y$ can be turned into a model for $EG$ by attaching $G$-cells in dimension $3$ and higher only. Hence, its homology with arbitrary coefficients agrees with that of $X$ up to dimension $1$, which in particular implies that $\dim_{D_r} H_p(D_r[t, t^{-1}]_\phi\otimes C_*(X))=\dim_{D_r} H_p(D_r[t, t^{-1}]_\phi\otimes C_*(Y))$ for $p=0,1$.

In conclusion, we will know $\chiag[D_r](G;\phi)$ if we compute the first two $D_r[t, t^{-1}]_\phi$-Betti numbers of the $G$-CW-complex $Y$. For this, we need to consider its shape in more detail.
The complex $Y$ is a two-dimensional free $G$-CW-complex with one zero-cell, $k+1$ one-cells and $l+c$ two-cells, and its cellular chain complex takes the form
\[\dots\to0\to\ZZ G^{l+c}\xrightarrow{\begin{pmatrix}M_0 & M_1\end{pmatrix}} \ZZ G\oplus \ZZ G^k \xrightarrow{\begin{pmatrix}v_0\\v_1\end{pmatrix}}\ZZ G,\]
where the (potentially infinite) block matrix $M = \begin{pmatrix}M_0&M_1\end{pmatrix}$ representing the second differential consists of the Fox derivatives of the relations with respect to $t$ and the $g_i$, respectively, and $v_0=t-1,v_1=(g_1-1,\dots,g_k - 1)^t$. Since the relations $r_1, r_2, \dots$ are words in $\ZZ A$, their Fox derivatives with respect to $t$ are trivial and their derivatives with respect to each $g_i$ again lie in $\ZZ A$. For the other relations, we obtain
\begin{align*}
\frac{\partial}{\partial t} (\mu(x_j)^{-1} t x_j t^{-1}) & =\mu(x_j)^{-1}-\mu(x_j)^{-1}tx_jt^{-1}\in\ZZ A \textrm{ and}\\
\frac{\partial}{\partial g_i} (\mu(x_j)^{-1} t x_j t^{-1}) & = \frac{\partial}{\partial g_i} (\mu(x_j)^{-1})+\mu(x_j)^{-1}t\frac{\partial}{\partial g_i} x_j \in \ZZ A+t\cdot \ZZ A.
\end{align*}

Hence, the matrix $M$ is of the shape

\[
\begin{tikzpicture}[baseline=0cm,decoration=brace]
    \matrix (m) [matrix of math nodes, left delimiter=(,right delimiter=)] {
        0&\vphantom{0}&&\vphantom{0}\\
        \smashedvdots&&\in \ZZ A&\\
        0&&&\\
        \in \ZZ A&&&\\
        \smashedvdots&&\in \ZZ A+ t\cdot \ZZ A&\\
        \in \ZZ A&&&\\
    };
    \draw[decorate, transform canvas={xshift=-2.8em}, thick] (m-3-1.south west)+(0,1mm) -- node[left=2pt] {$l$} (m-1-1.north west);
    \draw[decorate, transform canvas={xshift=-1.9em}, thick] (m-6-1.south west) -- node[left=2pt] {$c$} (m-4-1.north west)+(0,1mm);
    \draw[decorate, transform canvas={yshift=2em}, thick] (m-1-2.south west) -- node[above=2pt] {$k$} (m-1-4.south east);
\end{tikzpicture}
\]
with the block $M_0$ consisting of the first column of $M$. Now consider the following chain map of $D_r[t, t^{-1}]_\phi$-chain complexes, where the vertical maps are given by projections and both complexes continue trivially to the left and right:

\[
\mathclap{
\begin{tikzcd}[ampersand replacement=\&, row sep=large, column sep=large]
D_r[t, t^{-1}]_\phi^{l+c} \arrow[d]\arrow{r}{\begin{pmatrix}M_0 & M_1\end{pmatrix}} \& D_r[t, t^{-1}]_\phi\oplus D_r[t, t^{-1}]_\phi^k \arrow[d]\arrow{r}{\begin{pmatrix}v_0\\v_1\end{pmatrix}} \& D_r[t, t^{-1}]_\phi \arrow[d]\\
0 \arrow[r] \& D_r[t, t^{-1}]_\phi^k / (D_r[t, t^{-1}]_\phi^{l+c} M_1)\arrow[r,"\begin{pmatrix}v_1\end{pmatrix}"]\& D_r[t, t^{-1}]_\phi / (t-1)
\end{tikzcd}
}
\]

Since multiplication from the right with $t-1$ is injective on $D_r[t, t^{-1}]_\phi$, the chain map induces an isomorphism on homology in degrees 0 and 1. Since all the homology modules $H_i(D_r[t,t^{-1}]_\phi\otimes C_*(Y))$ are torsion by \cref{lemm:twisted:laurent_computation}, the same holds true for the homology of the lower chain complex. Using \cref{prop:tgor:harvey}, we thus get the bound
\[
\dim_{D_r} D_r[t, t^{-1}]_\phi^k / (D_r[t, t^{-1}]_\phi^{l+c} M_1) = \dim_{D_r} \tors(\coker(r_{M_1}))\leq c.
\]
As $\deg(t-1)=1$, we also get
\[\dim_{D_r} D_r[t, t^{-1}]_\phi / (t-1) = 1.\]

In particular, the lower chain complex consists of finite $D_r$-vector spaces. Applying the rank-nullity theorem to its only non-trivial differential, we obtain
\begin{align*}
\thickness_\phi(\Polytope_\pi) &= -\chiag[D_r](X;\phi) \\
&= \dim_{D_r} H_1(D_r[t, t^{-1}]_\phi\otimes C_*(Y)) - \dim_{D_r} H_0(D_r[t, t^{-1}]_\phi\otimes C_*(Y)) \\
&= \dim_{D_r} D_r[t, t^{-1}]_\phi^k / (D_r[t, t^{-1}]_\phi^{l+c} M_1) - \dim_{D_r} D_r[t, t^{-1}]_\phi / (t-1)\\
&\leq c-1.\qedhere
\end{align*}
\end{proof}

\begin{exam}
For words $x, y\in \langle a, b\rangle$, we define $x^y\coloneqq y^{-1}xy$ and $[x,y]\coloneqq x^{-1}y^{-1}xy$. Consider the two-generator one-relator group $G$ defined by
\[\left\langle a, b\;\middle|\; [a, b]=\left[[a, b], [a, b]^b\right]\right\rangle,\]
which can be presented in cyclically reduced form as
\[\pi\coloneqq \left\langle a, b\;\middle|\; a^{-1}bab^{-1}a^{-1}bab^{-2}a^{-1}baba^{-1}b^{-2}ab\right\rangle.\]
We see directly from the first presentation of $G$ that the relator becomes trivial in the abelianisation, hence $b_1(G)=2$ and $\pi$ is a nice $(2, 1)$-presentation. By~\cite[Proposition~II.5.18]{LS2001}, the group $G$ is also torsion-free since the single relator is not a proper power.

We claim that $G$ is not residually solvable, i.e., not every element maps non-trivially into a solvable quotient of $G$. Since the element $[a,b]$ can be written as an arbitrarily deeply nested iterated commutator (using the relation of the first presentation above), it is contained in all derived subgroups of $G$ and hence of every quotient. But if a quotient is solvable, some derived subgroup and hence the image of $[a, b]$ will be trivial. It is thus left to show that $[a,b]$ is non-trivial in $G$. Assume that $[a,b]=1$ in $G$. Then $G$ is abelian and hence also $[b,a]=b^{-1}a^{-1}ba=1$ in $G$. But $[b,a]$ appears as a proper subword of the relator in $\pi$ and thus represents a non-trivial element by~\cite[Proposition~II.5.29]{LS2001}.

We conclude that a method such as the one employed in~\cite[Lemma~6.1]{FT2015} cannot be used to deduce that $G$ is residually \{torsion-free elementary amenable\} and hence satisfies the assumptions of \cref{theo:tgor:residually_elementary_amenable}. We deem it plausible that $G$ is even not residually \{torsion-free elementary amenable\} and is thus not covered by \cref{theo:tgor:residually_elementary_amenable}, but to the best of the authors' knowledge no two-generator one-relator group has been shown to have this property.

If we denote the single relator of $\pi$ by $r$, an easy but tedious computation shows that
\begin{align*}
\frac{\partial r}{\partial a} =& -\overbrace{b^{-1}a^{-1}}^{(-1,-1)} + \overbrace{b^{-1}a^{-1}b}^{(-1, 0)} - \overbrace{b^{-1}a^{-1}bab^{-1}a^{-1}}^{(-1,-1)} + \overbrace{b^{-1}a^{-1}bab^{-1}a^{-1}b}^{(-1,0)} \\
&-\overbrace{(b^{-1}a^{-1}ba)^2b^{-2}a^{-1}}^{(-1,-2)} + \overbrace{(b^{-1}a^{-1}ba)^2b^{-2}a^{-1}b}^{(-1, -1)}-\overbrace{(b^{-1}a^{-1}ba)^2b^{-2}a^{-1}baba^{-1}}^{(-1,0)}\\
&+\overbrace{(b^{-1}a^{-1}ba)^2b^{-2}a^{-1}baba^{-1}b^{-2}}^{(-1,-2)},
\end{align*}
with the image in the abelianisation of each summand noted in brackets. The convex hull of these points in $\RR^2$ corresponds to an interval of length 2 in the $b$-direction, hence $\Polytope_\pi=\Pag{D_r}(G)$ is an interval of length 1 in the $b$-direction. The marked polytope $\MarkedPolytope_\pi$ has no markings since all abelianised monomials appear multiple times.

Let $\phi_b\colon G\to \ZZ$ be the homomorphism sending $a$ to $0$ and $b$ to $1$. Since $\thickness_{\phi_b}(\Pag{D_r}(G))=1$, we conclude from \cref{theo:tgor:thickness} that $c_f(G,\phi_b)=c(G,\phi_b)=2$. A (free) splitting of $G$ along $\phi_b$ of minimal rank is thus given by
\begin{align*}
G=&\left\langle a, b, x, y\;\middle|\; x=[x,y], y=x^b, x=[a, b]\right\rangle\\
=& \left\langle a, x, y, b\;\middle|\; x=[x,y], y=x^b, ax=a^b \right\rangle.
\end{align*}

Note that our example is a nice version of the original example of a two-generator one-relator group which is not residually finite produced by Baumslag in~\cite{Baumslag1969}.
\end{exam}

\bibliography{main}
\end{document}